\newtheorem{thm}[equation]{Theorem}
\newtheorem{lem}[equation]{Lemma}
\newtheorem{cor}[equation]{Corollary}
\theoremstyle{definition}
\newtheorem{dfn}[equation]{Definition}
\newtheorem{rmk}[equation]{Remark}
\numberwithin{equation}{section}
\numberwithin{figure}{section}
\newcommand\abs[2][empty]{\csname#1\endcsname \lvert{#2}\csname#1\endcsname\rvert}
\newcommand\doublebar[2][empty]{\csname#1\endcsname \lVert{#2}\csname#1\endcsname\rVert}
\newcommand\mat[1]{\bm{#1}}
\newcommand\arr[1]{\dot{\bm{#1}}}
\newcommand\dist{\mathop{\mathrm{dist}}\nolimits}
\newcommand\Div{\mathop{\mathrm{div}}\nolimits}
\newcommand\Tr{\mathop{\smash{\arr{\mathrm{Tr}}}\vphantom{T}}\nolimits}
\newcommand\Trace{\mathop{\mathrm{Tr}}\nolimits}
\newcommand\M{\mathop{\smash{\arr{\mathrm{M}}}\vphantom{M}}\nolimits}
\newcommand\MM{\mathop{\mathrm{M{}}}\nolimits}
\newcommand\supp{\mathop{\mathrm{supp}}\nolimits}
\newcommand\diam{\mathop{\mathrm{diam}}\nolimits}
\newcommand\R{\mathbb{R}} 
\newcommand\C{\mathbb{C}} 
\newcommand\Z{\mathbb{Z}} 
\newcommand\N{\mathbb{N}}
\newcommand\1{\mathbf{1}}
\newcommand\NN{\mathfrak{N}}
\newcommand\XX{\mathfrak{X}}
\newcommand\YY{\mathfrak{Y}}
\newcommand\pmin{\pdmnMinusOne/\allowbreak(\dmnMinusOne+\theta)}
\newcommand\dmn{d}
\newcommand\pdmn{d}
\newcommand\dmnMinusOne{{d-1}}
\newcommand\pdmnMinusOne{{(d-1)}}
\begin{document}

\title[Traces and extensions]{Trace and extension theorems relating Besov spaces to weighted averaged Sobolev spaces}


\author{Ariel Barton}
\address{Ariel Barton, Department of Mathematical Sciences,
			309 SCEN,
			University of Ar\-kan\-sas,
			Fayette\-ville, AR 72701}
\email{aeb019@uark.edu}

\keywords{Besov spaces, weighted Sobolev spaces, traces, extensions, Neumann boundary values}

\subjclass[2010]{
46E35
}

\begin{abstract}
There are known trace and extension theorems relating functions in a weighted Sobolev space in a domain~$\Omega$ to functions in a Besov space on the boundary~$\partial\Omega$. We extend these theorems to the case where the Sobolev exponent $p$ is less than one by modifying our Sobolev spaces to consider averages of functions in Whitney balls. Averaged Sobolev spaces are also of interest in the applications in the case where $p>1$, and so we also provide trace and extension results in that case. Finally, we provide some comparable results for Neumann traces and extensions. 
%
\end{abstract}

\maketitle

\setcounter{tocdepth}{1}
\tableofcontents
\setcounter{tocdepth}{2}

\section{Introduction}

Suppose that $u$ is a function defined in some domain~$\Omega$. We are interested in the boundary values of~$u$. Specifically, we wish to identify a space $\XX$ such that if $u$ lies in~$\XX$, then the boundary traces $\Trace\nabla^{m-1} u$ of the derivatives of order $m-1$ lie in the Besov space $\dot B^{p,p}_\theta(\partial\Omega)$.

We would like our result to be sharp in the sense that, if $\arr f$ is an array of functions in $\dot B^{p,p}_\theta(\partial\Omega)$, and if $\arr f=\Trace \nabla^{m-1}\varphi$ for some function~$\varphi$, then $\arr f=\Trace \nabla^{m-1} F$ for some $F\in\XX$. (Recall that the partial derivatives of a function must satisfy some compatibility conditions; thus, the requirement that $\arr f = \Trace\nabla^{m-1}\varphi$ for some~$\varphi$ is a nontrivial restriction if $m\geq 2$.)

Such trace and extension theorems bear a deep connection to the theory of Dirichlet boundary value problems. For example, consider the harmonic Dirichlet problem 
\begin{equation}\label{eqn:dirichlet:harmonic}
\Delta u=0\text{ in }\Omega,\quad u=\varphi\text{ on }\partial\Omega,\quad \doublebar{u}_\XX\leq C\doublebar{\varphi}_{\dot B^{p,p}_\theta(\partial\Omega)}
\end{equation}
or more generally the higher-order boundary value problem
\begin{equation}\label{eqn:dirichlet:boundary}L u=0\text{ in }\Omega,\quad \nabla^{m-1}u=\nabla^{m-1}\varphi\text{ on }\partial\Omega,\quad \doublebar{u}_\XX\leq C\doublebar{\Trace \nabla^{m-1}\varphi}_{\dot B^{p,p}_\theta(\partial\Omega)}\end{equation}
for some differential operator $L$ of the form $Lu=\sum_{\abs{\alpha}=\abs{\beta}=m} \partial^\alpha(A_{\alpha\beta} \partial^\beta u)$. If we have an extension theorem as indicated above, then there is some $F\in\XX$ with $ \nabla^{m-1}F=\nabla^{m-1}\varphi$ on~$\partial\Omega$. If $L:\XX\mapsto \YY$ is bounded, then we may reduce the problem \eqref{eqn:dirichlet:boundary} to the problem 
\begin{equation}\label{eqn:dirichlet:poisson}
L v=h\text{ in }\Omega,\quad \nabla^{m-1}v=0\text{ on }\partial\Omega,\quad \doublebar{v}_\XX\leq C\doublebar{h}_\YY\end{equation}
with zero boundary data by letting $h=-LF$ and then letting $u=v+F$. In some cases we may reverse the argument, going from well-posedness of the problem \eqref{eqn:dirichlet:boundary} to well-posedness of the problem~\eqref{eqn:dirichlet:poisson}. See the papers \cite{JerK95,AdoP98,MayMit04A,Agr07, MazMS10,MitMW11, MitM13B,MitM13A,BarM16A} for examples of such arguments with various choices of~$L$; the trace and extension theorems of the present paper will be used in \cite{Bar16pA} for this purpose.

In this paper we will introduce the weighted averaged Lebesgue spaces $L_{av}^{p,\theta,q}(\Omega)$ and Sobolev spaces $\dot W^{p,\theta,q}_{m,av}(\Omega)$, where $\dot W_{m,av}^{p,\theta,q}(\Omega)$ is defined to be the space of functions $u$ with 
$\doublebar{u}_{\dot W_{m,av}^{p,\theta,q}(\Omega)}=\doublebar{\nabla^m u}_{L_{av}^{p,\theta,q}(\Omega)}<\infty$ and where the
$L_{av}^{p,\theta,q}(\Omega)$-norm is given by
\begin{equation}\label{eqn:norm:L}
\doublebar{H}_{L_{av}^{p,\theta,q}(\Omega)}
= 
\biggl(\int_\Omega \biggl(\fint_{B(x,\dist(x,\partial\Omega)/2)} \abs{H}^q \biggr)^{p/q}  \dist(x,\partial\Omega)^{p-1-p\theta}\,dx\biggl)^{1/p}
.\end{equation}
The main result of this paper for Dirichlet boundary data is the following theorem. 
\begin{thm} \label{thm:dirichlet}
Let $\Omega\subset\R^\dmn$ be a Lipschitz domain with connected boundary. Let $1\leq q\leq \infty$, let $0<\theta<1$ and let $\pmin<p\leq\infty$.

If $u\in \dot W_{m,av}^{p,\theta,q}(\Omega)$, then $\Trace \partial^\gamma u\in \dot B^{p,p}_\theta(\partial\Omega)$ for any multiindex $\gamma$ with $\abs{\gamma}=m-1$, and $$\doublebar{\Trace \partial^\gamma u}_{\dot B^{p,p}_\theta(\partial\Omega)}\leq C \doublebar{u}_{\dot W_{m,av}^{p,\theta,q}(\Omega)}$$ for some constant $C$ depending only on $p$, $\theta$, the Lipschitz character of $\Omega$ and the ambient dimension~$\dmn$.

Conversely, let $F$ be a function such that $\Trace \partial^\gamma F\in \dot B^{p,p}_\theta(\partial\Omega)$ for any $\abs\gamma=m-1$.  Then there is some $u\in \dot W_{m,av}^{p,\theta,q}(\Omega)$ with $$\doublebar{u}_{\dot W_{m,av}^{p,\theta,q}(\Omega)}\leq C\doublebar{\Trace \nabla^{m-1} F}_{\dot B^{p,p}_{\theta}(\partial\Omega)}
\quad\text{and}\quad \Trace \nabla^{m-1} u=\Trace \nabla^{m-1} F.$$
\end{thm}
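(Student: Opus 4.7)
The plan is to base both directions on a standard Whitney decomposition $\{Q_j\}$ of $\Omega$ (with $\ell(Q_j) \approx \dist(Q_j,\partial\Omega)$), paired with companion boundary surface cubes $\Delta_j \subset \partial\Omega$ satisfying $\sigma(\Delta_j) \approx \ell(Q_j)^{d-1}$ and $\dist(\Delta_j,Q_j) \lesssim \ell(Q_j)$. The associated Whitney balls $B_j = B(x_j,\dist(x_j,\partial\Omega)/2)$ are precisely the balls over which the $L^q$-averages in~\eqref{eqn:norm:L} are taken, so the Whitney geometry interlocks cleanly with the weighted averaged norm.

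\emph{Trace direction.} For $u \in \dot W_{m,av}^{p,\theta,q}(\Omega)$ and $|\gamma|=m-1$, first I would show that the averages $(\partial^\gamma u)_{B_j}$ stabilize as $B_j$ shrinks to almost every boundary point $Q$, producing a well-defined $f_\gamma(Q) = \Trace \partial^\gamma u(Q)$. To estimate the Besov seminorm I would compare averages of $f_\gamma$ over two nearby boundary cubes $\Delta,\Delta'$ of comparable radius $r$ by chaining through a sequence of adjacent Whitney balls; each link in the chain is controlled via a Poincar\'e inequality on $cB_j$ by $\ell(Q_j)(\fint_{cB_j} |\nabla^m u|^q)^{1/q}$. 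Organizing chains by their vertical depth below $\partial\Omega$ and plugging into the classical double-integral characterization of $\dot B^{p,p}_\theta(\partial\Omega)$ (when $p \geq 1$) or an atomic/molecular characterization (when $p<1$), the sum rearranges into an integral over $\Omega$ of $(\fint |\nabla^m u|^q)^{p/q}$ against precisely the weight $\dist(x,\partial\Omega)^{p-1-p\theta}$ appearing in~\eqref{eqn:norm:L}.

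\emph{Extension direction.} Given $F$ with $f_\gamma = \Trace \partial^\gamma F \in \dot B^{p,p}_\theta$ for $|\gamma|=m-1$, I would first extract compatible lower-order boundary traces $f_\beta = \Trace \partial^\beta F$ for $|\beta|<m-1$. A Whitney-type extension then takes a smooth partition of unity $\{\varphi_j\}$ subordinate to a mild dilation of $\{Q_j\}$, picks $P_j \in \Delta_j$, and sets
\[u(x) = \sum_j \varphi_j(x) \sum_{|\beta|\leq m-1} \frac{(x-P_j)^\beta}{\beta!} f_\beta(P_j).\]
Because $\sum_j \varphi_j \equiv 1$ and Taylor polynomials of degree less than $m$ reproduce themselves, for any fixed reference index $j_0$ near $x$ one may rewrite $\nabla^m u(x)$ so that only \emph{differences} $f_\gamma(P_j)-f_\gamma(P_{j_0})$, $|\gamma|=m-1$, appear. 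Besov regularity of $f_\gamma$ controls these differences, and summing against the Whitney weight reproduces the required $\dot W^{p,\theta,q}_{m,av}$-bound; the trace identity $\Trace \nabla^{m-1} u = \Trace \nabla^{m-1} F$ follows from Lebesgue-point arguments applied to each $f_\gamma$.

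\emph{Main obstacle.} The delicate regime is $\pmin < p < 1$: the $L^p$ triangle inequality fails, so crude summation of Whitney-ball contributions does not close. The hypothesis $p > \pmin$ is exactly the sharp threshold for $\dot B^{p,p}_\theta$ on a $(d-1)$-dimensional boundary to admit atomic decompositions, so I expect to recast the whole Whitney argument atomically, or pass through Peetre's maximal function on the tent-space side, in order to coordinate the $L^q$ Whitney-ball averages of $|\nabla^m u|$ with boundary atomic cubes. The Lipschitz character of $\Omega$ and connectedness of $\partial\Omega$ enter only to guarantee the Whitney decomposition, the surface-measure estimate $\sigma(\Delta) \approx r^{d-1}$, and the ability to form the chains used in the trace direction.
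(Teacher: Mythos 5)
Your proposal is appealingly self-contained but diverges substantially from the paper's proof and has two genuine gaps. In the extension direction, the paper does not build a discrete Whitney extension with a partition of unity subordinate to Whitney cubes; it uses the continuous averaged-Taylor operator $\mathcal{E}\arr\varphi(x)=\int_{\partial\Omega}K(x,y)P(x,y)\,d\sigma(y)$ of Maz'ya--Mitrea--Shaposhnikova, where $P(x,y)$ is the order-$(m-1)$ Taylor polynomial of a fixed extension $\varphi$ at the boundary point $y$. The step you compress into ``only differences $f_\gamma(P_j)-f_\gamma(P_{j_0})$, $|\gamma|=m-1$, appear'' is not correct as written: after subtracting a reference Taylor polynomial $T_{j_0}$ and applying $\nabla^m$ to the Whitney sum, what survives for each multiindex $\alpha$ with $|\alpha|\le m-1$ is the full Whitney remainder
\begin{equation*}
R_\alpha(P_j,P_{j_0})= f_\alpha(P_j)-\sum_{\substack{\beta\ge\alpha\\ |\beta|\le m-1}}\frac{(P_j-P_{j_0})^{\beta-\alpha}}{(\beta-\alpha)!}\,f_\beta(P_{j_0}),
\end{equation*}
not merely the $|\alpha|=m-1$ piece. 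Reducing the lower-order remainders to the top-order Besov data requires an iterated tangential Poincar\'e inequality on surface balls, which is exactly what the paper obtains from the Taylor-remainder identity on p.~177 of Stein together with the elementary estimate bounding $\int\abs{\theta}^q\abs{y'-z'}^{1-\dmn}$ by $r^q\int\abs{\nabla\theta}^q\abs{y'-z'}^{1-\dmn}$; that bootstrap is the substance of the extension proof and does not follow from Taylor-polynomial self-reproduction alone.

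In the trace direction you correctly single out $\pmin<p<1$ as the delicate regime but leave it open. The paper does not close it by direct atomic chaining on a general Lipschitz boundary. It reduces $m\ge2$ to $m=1$ (since each $\partial^\gamma u\in\dot W^{p,\theta,q}_{1,av}(\Omega)$), reduces a Lipschitz graph domain to the half-space by a bilipschitz change of variables, invokes the half-space trace theorem of \cite{BarM16A}, and then localizes to compact-boundary Lipschitz domains through a dedicated lemma that transfers $\dot B^{p,p}_\theta$ atomic decompositions from $\partial V$ (a graph) to $\partial\Omega$. Without a comparable reduction, ``recast the whole Whitney argument atomically'' names the missing work rather than doing it; matching Whitney-ball chain contributions against boundary atoms directly on a curved Lipschitz boundary is not straightforward, which is precisely why the paper routes through the half-space. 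Finally, $\Trace\nabla^{m-1}u=\Trace\nabla^{m-1}F$ is not a pure Lebesgue-point statement: the paper proves $\partial^\gamma\mathcal{E}\arr\varphi(x)\to\varphi_\gamma(z)$ as $x\to z$ using the H\"older modulus of $\nabla^{m-1}\varphi$ to control the Taylor remainder uniformly, an ingredient your sketch leaves implicit.
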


Also of great importance in the theory of boundary value problems is the second-order Neumann problem 
\begin{equation}\label{eqn:neumann:harmonic}
\Div \mat A\nabla u=0\text{ in }\Omega,\quad \nu\cdot \mat A\nabla u=g\text{ on } \partial\Omega\end{equation}
where $\nu$ is the unit outward normal vector to~$\Omega$ and where $\mat A$ is a coefficient matrix. We are interested in the Neumann problem for higher order equations; the second main result of this paper (Theorem~\ref{thm:neumann} below) is an analogue of Theorem~\ref{thm:dirichlet} for Neumann boundary data.

The appropriate generalization of Neumann boundary values to the higher order case is a complicated issue. 
We are interested in the following generalization of Neumann boundary values; this is the formulation used in \cite{Bar17pA,BarHM15p}, and is related to but subtly different from that of \cite{CohG85,Ver05,Agr07,Ver10,MitM13A}. 
We refer the reader to \cite{BarHM15p,BarM16B} for a discussion of various formulations of Neumann boundary data.

If $\vec G$ is a smooth vector field on $\overline\Omega$, then $\nu\cdot \vec G$ may be regarded as its Neumann boundary values. If $\vec G$ is
divergence free (in particular, if $\vec G=\mat A\nabla u$ for some solution $u$ to the problem~\eqref{eqn:neumann:harmonic}), then $\nu\cdot\vec G$ satisfies
\begin{equation}\label{eqn:neumann:1}\int_{\partial\Omega} \Trace\varphi\,(\nu\cdot \vec G) \,d\sigma 
= \int_\Omega \nabla\varphi\cdot \vec G
=\sum_{j=1}^\dmn \int_\Omega \partial_j\varphi\, G_j
\quad\text{for all $\varphi\in C^\infty_0(\R^\dmn)$.}\end{equation}
This formula may be used to define the Neumann boundary values of $\vec G$ even if $\vec G$ is not smooth. Furthermore, this formula generalizes to the higher order case: if $\arr G$ is an array of locally integrable functions indexed by multiindices $\alpha$ of length~$m$, then the analogue of formula~\eqref{eqn:neumann:1} is 
\begin{equation}\label{dfn:neumann}\sum_{\abs\gamma=m-1}\int_{\partial\Omega} \Trace\partial^\gamma\varphi\,(\M_m^\Omega \arr G)_\gamma \,d\sigma =  \sum_{\abs{\alpha}=m} \int_\Omega\partial^\alpha\varphi\,G_\alpha
\quad\text{for all $\varphi\in C^\infty_0(\R^\dmn)$}\end{equation}
where the array of distributions $\M_m^\Omega \arr G$ represents the Neumann boundary values of~$\arr G$.

We remark on two subtleties of formula~\eqref{dfn:neumann} in the case $m\geq 2$. 

First, the left-hand side of formula~\eqref{dfn:neumann} depends only on the boundary values $\Trace \nabla^{m-1}\varphi$ of $\varphi$ on~$\partial\Omega$ and not on the values of $\varphi$ in~$\Omega$; in this way $\M_m^\Omega\arr G$ may indeed be said to be Neumann \emph{boundary} values of~$\arr G$. For this equation to be meaningful, we must have that the right-hand side depends only on the boundary values of~$\varphi$ as well; thus, $\M_m^\Omega\arr G$ is defined only for arrays $\arr G$ that satisfy
\begin{equation}\label{eqn:divergence-free}
\int_\Omega\partial^\alpha\varphi\,G_\alpha=0
\quad\text{for all $\varphi\in C^\infty_0(\Omega)$.}\end{equation}
An array $\arr G$ that satisfies formula~\eqref{eqn:divergence-free} is said to satisfy $\Div_m\arr G=0$ in $\Omega$ in the weak sense; this condition is analogous to the requirement that $\Div \vec G=0$ in formula~\eqref{eqn:neumann:1}.
We remark that if $\arr G$ is smooth then $\Div_m \arr G=0$ if and only if $\sum_{\abs{\alpha}=m}\partial^\alpha G_\alpha=0$.

Second, if $\arr G$ is divergence-free in the sense of formula~\eqref{eqn:divergence-free}, then formula~\eqref{dfn:neumann} does define $\M_m^\Omega\arr G$ as an operator on the space $\{\Trace \nabla^{m-1}\varphi:\varphi\in C^\infty_0(\R^\dmn)\}$. This space is a \emph{proper} subspace of the space of arrays of smooth, compactly supported functions. Thus, $\M_m^\Omega\arr G$ is not an array of well-defined distributions; instead it is an equivalence class of such arrays, defined only up to adding arrays of distributions $\arr g$ for which $\langle \Trace\nabla^{m-1}\varphi,\arr g\rangle_{\partial\Omega}=0$ for all $\varphi\in C^\infty_0(\R^\dmn)$. Thus, Neumann boundary data naturally lies in quotient spaces of distribution spaces, as will be seen in the following theorem.
(This theorem is the second main result of this paper.)


\begin{thm} \label{thm:neumann}
Let $\Omega\subset\R^\dmn$ be a Lipschitz domain with connected boundary. Let $1\leq q\leq \infty$, let $0<\theta<1$ and let $\pmin<p\leq\infty$.

Suppose that $\arr g$ is an array of functions lying in $\dot B^{p,p}_{\theta-1}(\partial\Omega)$. Then there is some $\arr G\in L_{av}^{p,\theta,q}(\Omega)$, with $\Div_m\arr G=0$ in~$\Omega$, such that $\M_m^\Omega \arr G=\arr g$ in the sense that
\begin{equation*}\sum_{\abs\gamma=m-1}\int_{\partial\Omega} \Trace\partial^\gamma\varphi\,g_\gamma\,d\sigma =  \sum_{\abs{\alpha}=m} \int_\Omega\partial^\alpha\varphi\,G_\alpha\end{equation*}
for all smooth, compactly supported functions~$\varphi$. Furthermore, $$\doublebar{\arr G}_{L_{av}^{p,\theta,q}(\Omega)}\leq C\doublebar{\arr g}_{\dot B^{p,p}_{\theta-1}(\partial\Omega)}$$ for some constant $C$ depending only on $p$, $\theta$, the Lipschitz character of $\Omega$ and the ambient dimension~$\dmn$.

Conversely, let $\arr G\in L_{av}^{p,\theta,q}(\Omega)$ with $\Div_m\arr G=0$. Suppose that either $p>1$, or that $\Omega=\R^\dmn_+$ is a half-space, or that $m=1$ and  $\Omega=\{(x',t):x'\in\R^\dmnMinusOne,\>t>\psi(x')\}$ for some Lipschitz function $\psi:\R^\dmnMinusOne\mapsto\R$. 
Then the equivalence class of distributions $\M_m^\Omega \arr G$ contains a representative in $\dot B^{p,p}_{\theta-1}(\partial\Omega)$, and furthermore
\begin{equation*}\inf\{\doublebar{\arr g}_{\dot B^{p,p}_{\theta-1}(\partial\Omega)}:\arr g\in \M_m^\Omega\arr G\}
\leq C\doublebar{\arr G}_{L_{av}^{p,\theta,q}(\Omega)}.
\end{equation*}
\end{thm}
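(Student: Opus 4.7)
The plan is to derive both directions from Theorem~\ref{thm:dirichlet} by duality, handling the quasi-Banach range $p\leq 1$ separately. The two relevant dualities, valid when $p,q\in[1,\infty]$, are $(L^{p,\theta,q}_{av}(\Omega))^*=L^{p',1-\theta,q'}_{av}(\Omega)$ under the unweighted pairing $\int_\Omega HK\,dx$ (the exponent on $\dist(\cdot,\partial\Omega)$ balancing out because $1/p+1/p'=1$ and $\theta+(1-\theta)=1$), and $(\dot B^{p',p'}_{1-\theta}(\partial\Omega))^*=\dot B^{p,p}_{\theta-1}(\partial\Omega)$. Thus Theorem~\ref{thm:dirichlet} with the conjugate parameters $(p',1-\theta,q')$ dualizes into statements about $\arr G$ and $\arr g$ at $(p,\theta,q)$.

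For the existence of $\arr G$ when $p>1$, I would consider the functional
\[
\Lambda(\varphi):=\sum_{\abs\gamma=m-1}\int_{\partial\Omega}\Trace\partial^\gamma\varphi\,g_\gamma\,d\sigma,\qquad \varphi\in C^\infty_0(\R^\dmn).
\]
The trace part of Theorem~\ref{thm:dirichlet} at $(p',1-\theta,q')$ combined with Besov duality gives
\[
\abs{\Lambda(\varphi)}\leq C\doublebar{\arr g}_{\dot B^{p,p}_{\theta-1}}\doublebar{\nabla^m\varphi}_{L^{p',1-\theta,q'}_{av}}.
\]
Hahn--Banach extends $\Lambda$ from $\{\nabla^m\varphi\}$ to all of $L^{p',1-\theta,q'}_{av}(\Omega)$, and identification with the dual yields $\arr G\in L^{p,\theta,q}_{av}(\Omega)$ with the stated norm estimate. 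Testing against $\varphi\in C^\infty_0(\Omega)$ forces $\Div_m\arr G=0$, and the definition gives $\M_m^\Omega\arr G=\arr g$. For $p\leq 1$ duality is unavailable, so I would decompose $\arr g=\sum_j\lambda_j\arr a_j$ into Besov atoms $\arr a_j$ supported in boundary balls $\Delta_j$, explicitly build for each atom a compactly supported divergence-free $\arr G_j$ with $\M_m^\Omega\arr G_j=\arr a_j$ via a local (e.g.\ polyharmonic or reflection) Neumann construction at the scale of $\Delta_j$, obtain the scale-invariant per-atom bound $\doublebar{\arr G_j}_{L^{p,\theta,q}_{av}}\leq C$ from the homogeneity of the weight $\dist(\cdot,\partial\Omega)^{p-1-p\theta}$, and sum.

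For the converse, given $\Div_m\arr G=0$, the candidate Neumann datum acts on $\arr f\in \dot B^{p',p'}_{1-\theta}(\partial\Omega)$ by
\[
\langle\arr g,\arr f\rangle_{\partial\Omega}:=\sum_{\abs\alpha=m}\int_\Omega\partial^\alpha\varphi\,G_\alpha\,dx,
\]
where $\varphi$ is any extension of $\arr f$ produced by the extension part of Theorem~\ref{thm:dirichlet} at $(p',1-\theta,q')$. Independence of the choice of $\varphi$ uses $\Div_m\arr G=0$ together with an approximation of zero-trace elements of $\dot W^{p',1-\theta,q'}_{m,av}(\Omega)$ by $C^\infty_0(\Omega)$ functions in the averaged weighted norm. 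H\"older and the extension bound then yield $\abs{\langle\arr g,\arr f\rangle_{\partial\Omega}}\leq C\doublebar{\arr G}_{L^{p,\theta,q}_{av}}\doublebar{\arr f}_{\dot B^{p',p'}_{1-\theta}}$, identifying $\arr g$ with an element of $\dot B^{p,p}_{\theta-1}$ when $p>1$. For the special geometries in the quasi-Banach range, translation invariance along $\partial\Omega$ permits an explicit molecular decomposition of the Neumann functional, each molecule arising from a Whitney-localized piece of $\arr G$ which produces a $\dot B^{p,p}_{\theta-1}$-molecule of controlled size.

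The main obstacle is the regime $p\leq 1$: for the existence direction one must produce a scale-invariant local divergence-free lift of a single Besov atom (including verification that $\M_m^\Omega$ of the lift really equals the atom on the relevant subspace of test traces), and for the converse one must produce a molecular decomposition of the Neumann functional that depends on the rigidity of $\R^\dmn_+$ or of a Lipschitz graph domain. A secondary obstacle in the converse is the approximation lemma that allows zero-trace functions in $\dot W^{p',1-\theta,q'}_{m,av}(\Omega)$ to be replaced by $C^\infty_0(\Omega)$ functions in that norm, which is necessary for the candidate Neumann functional to be well-defined.
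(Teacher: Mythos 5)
Your $p>1$ arguments coincide with the paper's: for existence you dualize the trace half of Theorem~\ref{thm:dirichlet}, apply Hahn--Banach after embedding $\dot W_{m,av}^{p',1-\theta,q'}(\Omega)$ as a closed subspace of $(L_{av}^{p',1-\theta,q'}(\Omega))^r$, and identify the extended functional with an element of $L_{av}^{p,\theta,q}(\Omega)$; for the converse you pair $\arr G$ against extensions from the extension half at conjugate indices. Your $p\leq 1$ extension sketch also matches the paper in spirit: decompose into atoms and build a compactly supported divergence-free lift per atom (the paper's specific lift maps the atom's supporting cylinder bilipschitzly to a ball, solves the harmonic Neumann problem there via Mayboroda--Mitrea, reflects across the boundary, and pulls back via the change-of-variables lemma for second-order divergence-form operators). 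You correctly flag the well-definedness issue for the converse Neumann functional, which the paper handles with its density Theorem~\ref{thm:smooth:dense}.

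The genuine gap is the converse when $\pmin<p\leq 1$. Your proposal says to decompose the Neumann functional into molecules, ``each molecule arising from a Whitney-localized piece of $\arr G$.'' That cannot work as stated, for two reasons. First, $\1_{W(Q)}\arr G$ is not divergence-free, so $\langle \nabla^m\varphi, \1_{W(Q)}\arr G\rangle_\Omega$ depends on the interior behavior of $\varphi$ and not merely on $\Tr_{m-1}^\Omega\varphi$; the Whitney pieces do not individually define boundary functionals, let alone $\dot B^{p,p}_{\theta-1}$-molecules. Second, and independently, you never address the selection of a representative of the equivalence class $\M_m^\Omega\arr G$, which in the higher-order case $m\geq 2$ is the crux of the converse: the functional $\varphi\mapsto\langle\nabla^m\varphi,\arr G\rangle_{\R^\dmn_+}$ depends on all the normal derivatives $\varphi_j=\partial_t^j\varphi\vert_{t=0}$ for $0\leq j\leq m-1$, and one must exhibit a single array $\arr g\in(\dot B^{p,p}_{\theta-1}(\R^\dmnMinusOne))^r$ pairing against $\Tr_{m-1}^\Omega\varphi$ correctly. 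The paper's route is substantively different: it introduces the auxiliary functionals $M_j\arr G$ determined by $\langle\nabla^m\varphi,\arr G\rangle_{\R^\dmn_+}=\sum_{j=0}^{m-1}\langle\varphi_j, M_j\arr G\rangle_{\R^\dmnMinusOne}$, bounds each $M_j\arr G$ in $\dot B^{p,p}_{\theta+j-m}(\R^\dmnMinusOne)$ by estimating its Daubechies-wavelet coefficients against local integrals of $\arr G$ over tents (invoking Lemma~\ref{lem:L:L1} and the Kyriazis wavelet characterization), and then manufactures the representative $\arr g$ from the $M_j\arr G$ by applying fractional powers of $-\Delta$ and tangential derivatives, verifying the identity $\sum_{\abs{\gamma_\parallel}=k}\frac{k!}{\gamma_\parallel!}\partial^{2\gamma_\parallel}=\Delta^k$ to check that $\arr g$ really lands in the equivalence class. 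Your proposal contains neither the wavelet-coefficient mechanism nor any analogue of the $(-\Delta)^{j-m+1}$ representative construction, and the obstacle you flag is exactly the content of that theorem, not a technicality to be filled in.
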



We now review the history of trace and extension theorems for boundary data in Besov spaces. To simplify our notation, we will introduce some terminology. Loosely, let 
\begin{equation*}\dot W\!A^p_{m-1,\theta}(\partial\Omega)
=\{\arr\varphi\in \dot B^{p,p}_{\theta}(\partial\Omega): \arr\varphi=\Trace\nabla^{m-1}\varphi\text{ for some }\varphi\}.\end{equation*}
(We will provide precise definitions in Section~\ref{sec:space:dfn}.) $\dot W\!A^p_{m-1,\theta}(\partial\Omega)$ is thus the space of all arrays of functions in a Besov space that may reasonably be expected to arise as boundary traces. Many of the results in the literature concern the ``inhomogeneous'' spaces $W\!A^p_{m-1,\theta}(\partial\Omega)$; these are defined analogously to $\dot W\!A^p_{m-1,\theta}(\partial\Omega)$ but in addition have some estimates on the lower order derivatives.

If $\Omega\subset\R^\dmn$ is a sufficiently smooth domain, it is well known that the operator
\begin{equation*}\Trace \nabla^{m-1}: B^{p,p}_{m-1+\theta+1/p}(\Omega) \mapsto W\!A^p_{m-1,\theta}(\partial\Omega)\end{equation*}
is bounded and has a right inverse (an extension operator defined on $W\!A^p_{m-1,\theta}(\partial\Omega)$) provided $m\geq 1$, $\theta>0$ and $p>\pmin$. (If $\Omega$ is a Lipschitz domain then we need the additional restriction $\theta<1$.) 
In the case of the half-space $\Omega=\R^\dmn_+$, see \cite[Section~2.7.2]{Tri83} for the full result, and the earlier works \cite[Appendix~A]{Pee76}, \cite{Nik77B} and \cite[Section~2.9.3]{Tri78}, and \cite{Jaw78} for the result under various restrictions.
%
%
In the case where $\Omega$ is smooth, see \cite[Section~3.3.3]{Tri83}. 
In the case where $\Omega$ is a Lipschitz domain, see 
\cite{JonW84} in the case $p\geq 1$, \cite{MayMit04A} in the case $m=1$, and \cite[Theorem~3.9]{MitM13A} for the general case.

Another well known family of extensions of Besov functions are the weighted Sobolev spaces. Define the $W^{p,\theta}_m(\Omega)$-norm by 
\begin{equation*}\doublebar{u}_{W^{p,\theta}_m(\Omega)}
=\doublebar{u}_{L^p(\Omega)}+\biggl(\int_{\Omega} \abs{\nabla^m u(x)}^p\,\dist(x,\partial\Omega)^{p-1-p\theta}\,dx\biggr)^{1/p}
.\end{equation*}
Notice that this is similar to the $\dot W_{m,av}^{p,\theta,q}(\Omega)$-norm of Theorem~\ref{thm:dirichlet}, but is somewhat simpler in that we do not take local $L^q$ averages. (The $ \doublebar{u}_{L^p(\Omega)}$ term is an ``inhomogeneous'' term as mentioned above.) We consider averaged spaces both because they are somewhat better suited to the setting of differential equations with rough coefficients, and also because taking averages allows us to establish trace results in the case $p<1$; this issue is discussed further below.

If $\Omega$ is sufficiently smooth, then we have that the trace operator \begin{equation*}\Trace \nabla^{m-1}: W^{p,\theta}_m(\Omega)\mapsto  W\!A^p_{m-1,\theta}(\partial\Omega)\end{equation*}
is bounded and has a bounded right inverse provided $0<\theta<1$ and $1<p<\infty$. 
In the case where $\Omega=\R^\dmn_+$ is a half-space, see
\cite{Liz60,Usp61} (a shorter proof of Uspenski\u\i's results with some generalization may be found in \cite{MirR15}) or \cite[Section 2.9.2]{Tri78}. 
In the case where $\Omega$ is a domain with a reasonably smooth boundary (for example, a $C^{k,\delta}$ domain for some $k+\delta>\theta$), see \cite{Nik77B,Sha85,NikLM88,Kim07}. 
In a Lipschitz domain, see \cite{BreM13} (the case $m=1$) and \cite[Section~7]{MazMS10} (for $m\geq 1$). A few results are known in the cases $p=1$ and $p=\infty$; in particular, \cite{MirR15} considers trace and extension results (in the half-space and with $m\geq 1$) for boundary data in the Besov space~$\dot B^{p,r}_\theta(\partial\R^\dmn_+)$ for $1\leq p\leq \infty$ and $1\leq r<\infty$. (In particular, these results are valid for boundary data in $\dot B^{1,1}_\theta(\partial\R^\dmn_+)$ but not in $\dot B^{\infty,\infty}_\theta(\partial\R^\dmn_+)$.)



The spaces $W^{p,\theta}_m(\Omega)$ and $B^{p,p}_{m-1+\theta+1/p}(\Omega)$ in some circumstances are related; for example, by \cite[Theorem~4.1]{JerK95}, if $1\leq p\leq \infty$ and $u$ is harmonic, then $u\in{W^{p,\theta}_m(\Omega)}$ if and only if $u\in B^{p,p}_{m-1+\theta+1/p}(\Omega)$.

We now discuss the history of Neumann trace and extension theorems. Recall that Neumann boundary values are in some sense dual to Dirichlet boundary values; thus, if $p>1$, then by duality between $B^{p,p}_{\theta-1}(\partial\Omega)$ and $B^{p',p'}_{1-\theta}(\partial\Omega)$, with some careful attention to the definitions, Neumann trace and extension theorems (such as our Theorem~\ref{thm:neumann}) follow from the corresponding Dirichlet extension and trace theorems. See Section~\ref{sec:neumann:extension:p>1} and Theorem~\ref{thm:trace:Neumann} below. This is essentially the approach taken in \cite{FabMM98,Zan00,MitM13B} and in the $p>1$ theory of \cite{MitM13A,BarM16A}. 

If $p\leq 1$, then $B^{p,p}_{\theta-1}(\partial\Omega)$ is not a dual space, and so another approach is needed. In \cite{MayMit04A}, the authors established a result similar to the $m=1$ case of Theorem~\ref{thm:neumann} with Besov spaces instead of weighted Sobolev spaces. Specifically, if $\Delta u=f$ for some $f$ supported in a Lipschitz domain~$\Omega$, they formulated a notion of normal derivative $\partial_\nu^f u$, coinciding with $\nu\cdot\nabla u$ if $u$ and $\Omega$ are sufficiently smooth, such that if $u\in B^{p,p}_{\theta+1/p}(\Omega)$ for some $0<\theta<1$ and some $p>\pmin$, then $\partial_\nu^f u\in B^{p,p}_{\theta-1}(\partial\Omega)$.
They also showed that this Neumann trace operator had a bounded right inverse. 

The author's paper \cite{BarM16A} with Svitlana Mayboroda introduced the weighted averaged Sobolev spaces $\dot W_{1,av}^{p,\theta,q}(\R^\dmn_+)$ in the half-space and in the case $m=1$. Therein Dirichlet and Neumann trace results were established for $p>\pmin$, rather than $p>1$. 

The present paper extends the results of \cite{BarM16A} concerning weighted averaged Sobolev spaces to the case $m\geq 2$, the case of arbitrary Lipschitz domains with connected boundary, and also provides extension theorems. As compared with known results for $m\geq 2$, the major innovation of this paper is to consider the case $p< 1$ in the weighted Sobolev space (rather than the Besov space) setting, and also to provide some new results in the case $p=\infty$. 

The case $p<1$ has been the subject of much recent study in the theory of elliptic boundary value problems. Specifically, in \cite{MayMit04A}, the authors considered the harmonic Dirichlet problem~\eqref{eqn:dirichlet:harmonic} with boundary data in $\dot B^{p,p}_\theta(\partial\Omega)$, $p<1$, $0<\theta<1$, and the corresponding harmonic Neumann problem with boundary data in $\dot B^{p,p}_{\theta-1}(\partial\Omega)$. In \cite{BarM16A}, the authors considered the Neumann problem \eqref{eqn:neumann:harmonic} and the corresponding Dirichlet problem (\eqref{eqn:dirichlet:boundary} with $m=1$) for more general second order operators, again with boundary data in Besov spaces $\dot B^{p,p}_{\theta}(\partial\Omega)$ or $\dot B^{p,p}_{\theta-1}(\partial\Omega)$ with $p<1$. (The case $p<1$ has also been of interest in the integer smoothness case, that is, in the case of boundary data in a Hardy space $H^p(\partial\Omega)$ for $p<1$; see \cite{AusM14,HofMayMou15,HofMitMor15}.) In \cite{Bar16pA} we intend to generalize some of the results of \cite{MayMit04A,BarM16A} to the higher order case (that is, to boundary value problems such as \eqref{eqn:dirichlet:boundary}, $m\geq 2$, and the corresponding Neumann problem) and to extend to even more general second-order equations; the trace and extension results of this paper will be very useful in that context.


Weighted Sobolev spaces are more appropriate to rough boundary value problems than Besov spaces. 
Recall from the theory of partial differential equations that $u$ is defined to be a weak solution to $Lu=\sum_{\abs{\alpha}=\abs{\beta}=m} \partial^\alpha(A_{\alpha\beta} \partial^\beta u)=0$ in~$\Omega$ provided $\sum_{\abs{\alpha}=\abs{\beta}=m} \int_\Omega \partial^\alpha \varphi \,A_{\alpha\beta}\,\partial^\beta u=0$ for all $\varphi\in C^\infty_0(\Omega)$. This definition is meaningful even for rough coefficients~$\mat A$ if $\nabla^m u$ is merely locally integrable. Some regularity results exist; however, for general coefficients, the most that may be said is that $\nabla^m u$ is locally square-integrable, or at best $(2+\varepsilon)$th-power integrable for some possibly small $\varepsilon>0$. (In the second-order case, this is the well known Caccioppoli inequality and Meyers's reverse H\"older inequality \cite{Mey63}. Both may be generalized to the higher order case; see \cite{Cam80,AusQ00,Bar16}.)

Thus, we wish to study functions $u$ with at most $m$ degrees of smoothness; we do not wish to consider $u\in \dot B^{p,p}_{m-1+\theta+1/p}(\Omega)$, for if $\theta+1/p>1$ then $u$ is required to be too smooth. See \cite[Chapter~10]{BarM16A} for further discussion. Thus, weighted Sobolev spaces are more appropriate to our applications than Besov spaces. (If $p>2$, then weighted \emph{averaged} Sobolev spaces with $q=2$ are even more appropriate, as the gradient of a solution $\nabla^m u$ is known a priori to be locally square-integrable but not locally $p$th-power integrable.)

We introduce the averages in the spaces $\dot W_{m,av}^{p,\theta,q}(\Omega)$ both because of the applications to partial differential equations mentioned above, and also in order to establish trace theorems for $p<1$. Observe that if $u\in W_m^{p,\theta}(\Omega)$, then $\nabla^m u$ is only locally in $L^p$; if $p<1$ then $\nabla^m u$ need not be locally integrable and it is not clear that the trace operator can be extended to $W_m^{p,\theta}(\Omega)$. In Lemma~\ref{lem:L:L1} below, we will see that if $u\in\dot W_{m,av}^{p,\theta,q}(\Omega)$ for some~$q\geq 1$, then $\nabla^m u$ is locally integrable up to the boundary provided $p>\pmin$, and so the trace operator is well-defined. We remark that the existing theorems for $p<1$ and $u\in B^{p,p}_{m-1+\theta+1/p}(\Omega)$ also require $p>\pmin$, and for precisely this reason: by standard embedding theorems (see, for example, \cite{RunS96}), the condition $p>\pmin$ is precisely the range of $p$ such that gradients of $B^{p,p}_{\theta+1/p}(\Omega)$-functions are locally integrable up to the boundary. 

We have included results in the case $p>1$. In the Neumann case these results follow by duality as usual. In the Dirichlet case, our results are not quite the same as but do owe a great deal to those of \cite{MazMS10}. To allow for a better treatment of unbounded domains such as the half-space, we have chosen to work with boundary data in homogeneous Besov spaces rather than inhomogeneous spaces, that is, to bound only $\Trace \nabla^{m-1} u$ and not the lower order derivatives $\Trace \nabla^k u$, $0\leq k\leq m-2$; this requires some additional careful estimates. See in particular the bound \eqref{eqn:extension:bound}; in the case of inhomogeneous data the earlier bound~\eqref{eqn:extension:bound:2} (the bound (7.48) in \cite{MazMS10}) suffices. We also work with weighted, averaged Sobolev spaces $\dot W_{m,av}^{p,\theta,q}(\Omega)$ rather than weighted Sobolev spaces $W_m^{p,\theta}(\Omega)$; this presents no additional difficulties in the case of extension theorems but does require some care in the case of trace theorems. 

The outline of this paper is as follows. In Section~\ref{sec:dfn} we will define our terminology and the function spaces under consideration, in particular boundary spaces of Whitney arrays. In Section~\ref{sec:L} we will establish some basic properties of the weighted averaged spaces $L^{p,\theta,q}_{av}$. We will prove Theorem~\ref{thm:dirichlet} in Sections~\ref{sec:extension:dirichlet} and~\ref{sec:trace:dirichlet}, and finally will prove Theorem~\ref{thm:neumann} in Sections~\ref{sec:extension:neumann} and~\ref{sec:trace:neumann}.

\section{Definitions}
\label{sec:dfn}

Throughout this paper, we will work in domains contained in~$\R^\dmn$.

We will generally use lowercase Greek letters to denote multiindices in $\N^\dmn$, where $\N$ denotes the nonnegative integers. If $\gamma$ is a multiindex, then we define $\abs{\gamma}$, $\partial^\gamma$ and $\gamma!$ in the usual ways, via $\abs{\gamma}=\gamma_1+\gamma_2+\dots+\gamma_\dmn$, $\partial^\gamma=\partial_{x_1}^{\gamma_1}\partial_{x_2}^{\gamma_2} \cdots\partial_{x_\dmn}^{\gamma_\dmn}$,
and $\gamma!=\gamma_1!\,\gamma_2!\dots\gamma_\dmn!$. 
If $\gamma=(\gamma_1,\dots,\gamma_\dmn)$ and $\delta=(\delta_1,\dots,\delta_\dmn)$ are two multiindices, then we say that $\delta\leq \gamma$ if $\delta_i\leq \gamma_i$ for all $1\leq i\leq\dmn$, and we say that $\delta<\gamma$ if in addition the strict inequality $\delta_i< \gamma_i$ holds for at least one such~$i$.

We will routinely deal with arrays $\arr F=\begin{pmatrix}F_{\gamma}\end{pmatrix}$ indexed by multiindices~$\gamma$ with $\abs{\gamma}=m$ for some~$m$.
In particular, if $\varphi$ is a function with weak derivatives of order up to~$m$, then we view $\nabla^m \varphi$ as such an array, with \begin{equation*}(\nabla^m\varphi)_{\gamma}=\partial^\gamma \varphi.\end{equation*}
The inner product of two such arrays of numbers $\arr F$ and $\arr G$ is given by
\begin{equation*}\bigl\langle \arr F,\arr G\bigr\rangle = 
\sum_{\abs{\gamma}=m}
\overline{F_{\gamma}}\, G_{\gamma}.\end{equation*}
If $\arr F$ and $\arr G$ are two arrays of functions defined in an open set $\Omega$ or on its boundary, then the inner product of $\arr F$ and $\arr G$ is given by
\begin{equation*}\bigl\langle \arr F,\arr G\bigr\rangle_\Omega = 
\sum_{\abs{\gamma}=m}
\int_{\Omega} \overline{F_{\gamma}}\, G_{\gamma}
\quad\text{or}\quad
\bigl\langle \arr F,\arr G\bigr\rangle_{\partial\Omega} = 
\sum_{\abs{\gamma}=m}
\int_{\partial\Omega} \overline{F_{\gamma}}\, G_{\gamma}\,d\sigma\end{equation*}
where $\sigma$ denotes surface measure. (In this paper we will consider  only domains  with rectifiable boundary.)

Recall from formula~\eqref{eqn:divergence-free} that, if $\arr G$ is an array of functions defined in an open set~$\Omega\subset\R^\dmn$ and indexed by multiindices~$\alpha$ with $\abs{\alpha}=m$, then $\Div_m\arr G=0$ in~$\Omega$ in the weak sense if and only if $\langle \nabla^m\varphi,\arr G\rangle_\Omega=0$ for  all smooth test functions~$\varphi$ supported in~$\Omega$.


If $E$ is a set, we let $\1_E$ denote the characteristic function of~$E$.
If $\mu$ is a measure and $E$ is a $\mu$-measurable set, with $\mu(E)<\infty$, we let \begin{equation*}\fint_E f\,d\mu=\frac{1}{\mu(E)}\int_E f\,d\mu.\end{equation*}

We let $L^p(U)$ and $L^\infty(U)$ denote the standard Lebesgue spaces with respect to either Lebesgue measure (if $U$ is a domain) or surface measure (if $U$ is a subset of the boundary of a domain). We let $C^\infty_0(U)$ denote the space of functions that are smooth and compactly supported in~$U$.

If $U$ is a connected open set, then we let the homogeneous Sobolev space $\dot W^p_m(U)$ be the space of equivalence classes of functions $u$ that are locally integrable in~$\Omega$ and have weak derivatives in $\Omega$ of order up to~$m$ in the distributional sense, and whose $m$th gradient $\nabla^m u$ lies in $L^p(U)$. Two functions are equivalent if their difference is a polynomial of order~$m-1$.
We impose the norm 
\begin{equation*}\doublebar{u}_{\dot W^p_m(U)}=\doublebar{\nabla^m u}_{L^p(U)}.\end{equation*}
Then $u$ is equal to a polynomial of order $m-1$ (and thus equivalent to zero) if and only if its $\dot W^p_m(U)$-norm is zero. 

We say that $u\in L^p_{loc}(U)$ or $u\in\dot W^p_{m,loc}(U)$ if $u\in L^p(V)$ or $u\in \dot W^p_m(V)$ for every bounded set $V$ with $\overline V\subset U$. In particular, if $U$ is a set and $\overline U$ is its closure, then functions in $L^p_{loc}(\overline U)$ are required to be locally integrable even near the boundary~$\partial U$; if $U$ is open this is not true of $L^p_{loc}(U)$.

If $Q\subset\R^\dmnMinusOne$ is a cube, then we let $\ell(Q)$ denote its side-length. 

Recall that a Banach space is a complete normed vector space. We define quasi-Banach spaces as follows.
\begin{dfn}\label{dfn:quasi-Banach}
We say that a vector space $B$ is a \emph{quasi-Banach space} if it possesses a quasi-norm $\doublebar{\,\cdot\,}$ and is complete with respect to the topology induced by that quasi-norm.

We say that $\doublebar{\,\cdot\,}$ is a quasi-norm on the vector space $B$ if 
\begin{itemize}
\item $\doublebar{b}=0$ if and only if $b=0$,
\item if $b\in B$ and $c\in\C$, then $\doublebar{cb}=\abs{c}\,\doublebar{b}$,
\item there is some constant $C_B\geq 1$ such that, if $b_1\in B$ and $b_2\in B$, then $\doublebar{b_1+b_2}\leq C_B\doublebar{b_1}+C_B\doublebar{b_2}$.
\end{itemize}
\end{dfn}
If $C_B=1$ then $B$ is a Banach space and its quasi-norm is a norm.

In this paper, rather than the quasi-norm inequality $\doublebar{b_1+b_2}\leq C_B\doublebar{b_1}+C_B\doublebar{b_2}$, we will usually use the $p$-norm inequality 
\begin{equation*}\doublebar{b_1+b_2}^p\leq \doublebar{b_1}^p+\doublebar{b_2}^p\end{equation*}
for some $0<p\leq 1$.
We remark that if $0<p\leq 1$ then the $p$-norm inequality implies the quasi-norm inequality with $C_B=2^{1/p-1}$. (The converse result, that is, that any quasi-norm is equivalent to a $p$-norm for $p$ satisfying $2^{1/p-1}=C_B$, is also true; see \cite{Aok42,Rol57}.)

If $B$ is a quasi-Banach space we will let $B^*$ denote its dual space. If $1\leq p\leq \infty$ then we will let $p'$ be the extended real number that satisfies $1/p+1/p'=1$. Thus, if $1\leq p<\infty$, then $(L^p(U))^*=L^{p'}(U)$.

In this paper we will work in Lipschitz domains,  defined as follows.

\begin{dfn}\label{dfn:domain}
	We say that the domain $V\subset\R^\dmn$ is a \emph{Lipschitz graph domain} if there is some Lipschitz function $\psi:\R^\dmnMinusOne\mapsto\R$ and some coordinate system such that
	\begin{equation*}
	V=\{(x',t):x'\in\R^\dmnMinusOne,\>t>\psi(x')\}.
	\end{equation*}
	We refer to $ M =\doublebar{\nabla\psi}_{L^\infty(\R^\dmnMinusOne)}$ as the \emph{Lipschitz constant of~$V$.}
	
	We say that the domain $\Omega$ is a \emph{Lipschitz domain} if either $\Omega$ is a Lipschitz graph domain, or if there is some positive scale $r=r_\Omega$, some constants $M>0$ and $c_0\geq 1$, and some finite set $\{x_j\}_{j=1}^{ n }$ of points with $x_j\in\partial\Omega$, such that the following conditions hold. First,
	\begin{equation*}
	\partial \Omega\subset \bigcup_{j=1}^{ n } B(x_j,r_j)
	\quad \text{for some }r_j\text{ with }\frac{1}{c_0}r<r_j<c_0 r.
	\end{equation*}
	Second, for each $x_j$, there is some Lipschitz graph domain $V_j$ with $x_j\in\partial V_j$ and with Lipschitz constant at most~$M$, such that 
	\begin{equation*}
	Z_j\cap\Omega = Z_j\cap V_j\end{equation*}
	where $Z_j$ is a cylinder of height $(8+8M)r_j$, radius~$2r_j$, and with axis parallel to the $t$-axis (in the coordinates associated with~$V_j$).
	
	If $\Omega$ is a Lipschitz graph domain let ${ n }=c_0=1$; otherwise let $M$, ${ n }$, $c_0$ be as above.
	We refer to the triple $(M,{ n },c_0)$ as the \emph{Lipschitz character} of~$\Omega$. We will occasionally refer to $r_\Omega$ as the \emph{natural length scale} of~$\Omega$; if $\Omega$ is a Lipschitz graph domain then $r_\Omega=\infty$.
\end{dfn}
Notice that if $\Omega$ is a Lipschitz domain, then either $\Omega$ is a Lipschitz graph domain or $\partial\Omega$ is bounded. If $\partial\Omega$ is bounded and connected, then the natural length scale $r_\Omega$ is comparable to $\diam\partial\Omega$. 

Throughout we will let $C$ denote a constant whose value may change from line to line, but that depends only on the ambient dimension, the number $m$ in the operators $\Tr_{m-1}^\Omega$ and $\M_m^\Omega$, and the Lipschitz character of any relevant domains; any other dependencies will be indicated explicitly. We say that $A\approx B$ if $A\leq CB$ and $B\leq CA$ for some such~$C$.

\subsection{Function spaces in domains and their traces}

The spaces $L^{p,\theta,q}_{av}(\Omega)$ and $\dot W_{m,av}^{p,\theta,q}(\Omega)$ were defined in the introduction; for completeness, we include their definitions here.

\begin{dfn}
Let $\Omega$ be a connected Lipschitz domain and let $0<p\leq \infty$, $1\leq q\leq \infty$ and $-\infty<\theta<\infty$.

We let $L^{p,\theta,q}_{av}(\Omega)$
be the space of locally integrable functions $H$ such that the $L_{av}^{p,\theta,q}(\Omega)$-norm given by formula~\eqref{eqn:norm:L}
\begin{equation*}
\doublebar{H}_{L_{av}^{p,\theta,q}(\Omega)}
= 
\biggl(\int_\Omega \biggl(\fint_{B(x,\dist(x,\partial\Omega)/2)} \abs{H(y)}^q\,dy \biggr)^{p/q}  \dist(x,\partial\Omega)^{p-1-p\theta}\,dx\biggl)^{1/p}
\end{equation*}
is finite.

If $m$ is a positive integer, we let $\dot W_{m,av}^{p,\theta,q}(\Omega)$ be the space of equivalence classes (given by adding polynomials of degree $m-1$) of functions $u$ that are locally integrable in~$\Omega$ and have weak derivatives in $\Omega$ of order up to~$m$ in the distributional sense, and for which $\nabla^m u\in L^{p,\theta,q}_{av}(\Omega)$.
\end{dfn}

Observe that if $p\geq 1$ then $L^{p,\theta,q}_{av}(\Omega)$ (and $\dot W_{m,av}^{p,\theta,q}(\Omega)$) is a Banach space. If $0<p<1$ then $L^{p,\theta,q}_{av}(\Omega)$ is a quasi-Banach space with a $p$-norm, that is,
\begin{equation*}\doublebar{F+G}_{L^{p,\theta,q}_{av}(\Omega)}^p \leq \doublebar{F}_{L^{p,\theta,q}_{av}(\Omega)}^p + \doublebar{G}_{L^{p,\theta,q}_{av}(\Omega)}^p.\end{equation*}

The main results of this paper concern the Dirichlet and Neumann trace operators acting on $\dot W_{m,av}^{p,\theta,q}(\Omega)$ and $L^{p,\theta,q}_{av}(\Omega)$, respectively. Thus we must define these trace operators.
We will see (Section~\ref{sec:L}) that if $0<\theta<1$ and $p>\pmin$, then $L^{p,\theta,q}_{av}(\Omega)\subset L^1_{loc}(\overline\Omega)$. It thus suffices to define the Dirichlet and Neumann traces of functions in $\dot W^1_{m,loc}(\overline\Omega)$ and $L^1_{loc}(\overline\Omega)$, respectively.

\begin{dfn}
If $u\in \dot W^1_{m,loc}(\overline\Omega)$ then the Dirichlet boundary values of $u$ are the traces of the $m-1$th derivatives; for ease of notation we define $\Tr_{m-1}^\Omega  u$ as the array given by
\begin{equation}
\label{eqn:Dirichlet}
\begin{pmatrix}\Tr_{m-1}^\Omega  u\end{pmatrix}_{\gamma}
=\Trace \partial^\gamma u
\quad\text{for all $\abs{\gamma}=m-1$.}\end{equation}

If $\arr G\in L^1_{loc}(\overline\Omega)$ satisfies $\Div_m\arr G=0$ in~$\Omega$ in the sense of formula~\eqref{eqn:divergence-free}, then the Neumann boundary values $\M_m^\Omega\arr G$ of~$\arr G$ are given by formula~\eqref{dfn:neumann}; as discussed in the introduction, $\M_m^\Omega\arr G$ is an equivalence class of distributions under the relation $\arr g\equiv \arr h$ if $\langle \arr g,\Tr_{m-1}^\Omega\varphi\rangle_{\partial\Omega}=\langle \arr h,\Tr_{m-1}^\Omega\varphi\rangle_{\partial\Omega}$ for all $\varphi\in C^\infty_0(\R^\dmn)$.

\end{dfn}

\subsection{Function spaces on the boundary}
\label{sec:space:dfn}

In this section, we will define Besov spaces and Whitney-Besov spaces; in Sections~\ref{sec:extension:dirichlet}--\ref{sec:trace:neumann} we will show that these spaces are, in fact, the Dirichlet and Neumann trace spaces of weighted averaged spaces.

The homogeneous Besov spaces $\dot B^{p,r}_\theta(\R^\dmnMinusOne)$ on a Euclidean space, for $-\infty<\theta<\infty$, $0<p\leq\infty$, and $0<r\leq\infty$, have traditionally been defined using the Fourier transform (the classic Littlewood-Paley definition); this definition may be found in many standard references, including \cite[Section~5.1.3]{Tri83} or \cite[Section~2.6]{RunS96}.  There are many equivalent characterizations, valid for different ranges of the parameters $p$, $r$ and~$\theta$. Because we wish to consider boundary values of functions in domains, we must generalize some of these characterizations from $\R^\dmnMinusOne$ to $\partial\Omega$ for more general Lipschitz domains~$\Omega$; the Littlewood-Paley characterization does not generalize easily to such regimes.

In this paper, we will be concerned only with the space $\dot B^{p,p}_{\theta-1}(\partial\Omega)$ (for Neumann boundary values) or $\dot B^{p,p}_{\theta}(\partial\Omega)$ (for Dirichlet boundary values), with $0<\theta<1$ and $\pmin<p\leq\infty$. It will be convenient to use different definitions in the cases $p\geq 1$ and $p\leq 1$, and in the case of positive and negative smoothness spaces; the four characterizations we use are as follows.

\begin{dfn}\label{dfn:besov}

Let $0<\theta<1$, and let $\Omega\subset\R^\dmn$ be a Lipschitz domain with connected boundary.

If $\pdmnMinusOne/(\dmnMinusOne+\theta)<p\leq \infty$, then we say that $a$ is a $\dot B^{p,p}_{\theta}(\partial\Omega)$-atom if there is some $x_0\in \partial\Omega$ and some $r>0$ such that
\begin{itemize}
\item $\supp a \subseteq B(x_0,r)\cap\partial\Omega$,
\item $\doublebar{a}_{L^\infty(\partial\Omega)} \leq r^{\theta-\pdmnMinusOne/p}$,
\item $\doublebar{\nabla a}_{L^\infty(\partial\Omega)} \leq r^{\theta-1-\pdmnMinusOne/p}$,
\end{itemize}
where the $L^\infty$ norm is taken with respect to surface measure~$d\sigma$ and where the gradient denotes the tangential gradient of $a$ along~$\partial\Omega$.
We say that $a$ is a $\dot B^{p,p}_{\theta-1}(\partial\Omega)$-atom if there is some $x_0\in \partial\Omega$ and some $r>0$ such that
\begin{itemize}
\item $\supp a \subseteq B(x_0,r)\cap\partial\Omega$,
\item $\doublebar{a}_{L^\infty(\partial\Omega)} \leq r^{\theta-1-\pdmnMinusOne/p}$,
\item $\int_{\partial\Omega} \,a(x) \,d\sigma(x)=0$.
\end{itemize}


If $p\leq 1$ then we let $\dot B^{p,p}_{\theta-1}(\partial\Omega)$ be the space of distributions
\begin{equation*}\dot B^{p,p}_{\theta-1}(\partial\Omega)=\Bigl\{\sum_{j=1}^\infty \lambda_j a_j:\lambda_j\in\C,\>a_j\text{ a $\dot B^{p,p}_{\theta-1}$-atom,}\>\sum_{j=1}^\infty \abs{\lambda_j}^p<\infty\Bigr\}\end{equation*}
with the norm
\begin{equation*}
\doublebar{f}_{\dot B^{p,p}_{\theta-1}(\partial\Omega)} = 
\inf\Bigl\{ \Bigl(\sum_{j=1}^\infty \abs{\lambda_j}^p\Bigr)^{1/p}: f=\sum_{j=1}^\infty \lambda_j a_j,\>a_j\text{ a $\dot B^{p,p}_{\theta-1}$-atom,}\>\lambda_j\in\C\Bigr\}.
\end{equation*}

If $p\leq 1$ then we let $\dot B^{p,p}_\theta(\partial\Omega)$ be the space of equivalence classes of locally integral functions modulo constants
\begin{equation*}\dot B^{p,p}_\theta(\partial\Omega)=\Bigl\{\Bigl(c_0+\sum_{j=1}^\infty \lambda_j a_j:c_0\in\C\Bigr),\>\lambda_j\in\C,\>a_j\text{ a $\dot B^{p,p}_\theta$-atom,}\>\sum_{j=1}^\infty \abs{\lambda_j}^p<\infty\Bigr\}\end{equation*}
and impose the norm
\begin{multline*}
\doublebar{f}_{\dot B^{p,p}_{\theta}(\partial\Omega)} = 
\inf\Bigl\{ \Bigl(\smash{\sum_{j=1}^\infty}\vphantom{\sum^\infty} \abs{\lambda_j}^p\Bigr)^{1/p}: \\f=c_0+\smash{\sum_{j=1}^\infty}\vphantom{\sum_1} \lambda_j a_j,\>c_0\in\C,\>a_j\text{ a $\dot B^{p,p}_\theta$-atom,}\>\lambda_j\in\C\Bigr\}.
\end{multline*}

If the $a_j$s are atoms and the $\lambda_j$s are complex numbers with $\sum_j \abs{\lambda_j}^p<\infty$, then the sums $\sum_j \lambda_j a_j$ converge to distributions or functions; see Remark~\ref{rmk:atoms:converge}.

If $1< p\leq \infty$ and $0<\theta<1$, then we let ${\dot B^{p,p}_{\theta}(\partial\Omega)}$ be the set of all equivalence classes modulo constants of locally integrable functions $f$ defined on $\partial\Omega$ for which the ${\dot B^{p,p}_{\theta}(\partial\Omega)}$-norm given by
\begin{equation}
\label{eqn:Slobodekij}
\doublebar{f}_{\dot B^{p,p}_{\theta}(\partial\Omega)} = \biggl(\int_{\partial\Omega} \int_{\partial\Omega} \frac{\abs{f(x)-f(y)}^p}{\abs{x-y}^{\dmnMinusOne+p\theta}}\,d\sigma(x)\,d\sigma(y)\biggr)^{1/p}\end{equation}
is finite. If $p=\infty$ we modify the definition appropriately by taking the $L^\infty$ norm; then $\dot B^{p,p}_{\theta}(\partial\Omega)=\dot C^\theta(\partial\Omega)$, the space of H\"older continuous functions with exponent~$\theta$. 

Finally, if $1<p\leq \infty$ and $-1<\theta-1<0$, then we let $\dot B^{p,p}_{\theta-1}(\partial\Omega)$ be the dual space $\bigl(\dot B^{p',p'}_{1-\theta}(\partial\Omega)\bigr)^*$, where $1/p+1/p'=1$.

\end{dfn}

\begin{rmk}\label{rmk:atoms:converge}
The sums of atoms $\sum_{j=1}^\infty \lambda_j a_j$ are meaningful as locally integrable functions (if the $a_j$s are $\dot B^{p,p}_\theta$-atoms) or as distributions (if the $a_j$s are $\dot B^{p,p}_{\theta-1}$-atoms). 


Specifically, 
observe that if $\pmin<p\leq 1$ and $0<\theta<1$, then any $\dot B^{p,p}_\theta(\partial\Omega)$-atom is in $L^{\tilde p}(\partial\Omega)$ with uniformly bounded norm (depending on the Lipschitz constants of~$\Omega$), where $\tilde p=p\pdmnMinusOne/(\dmnMinusOne-p\theta)$; observe $\tilde p>1$. If $p\leq 1$ and $\sum_{j=1}^\infty \abs{\lambda_j}^p<\infty$, then $\sum_{j=1}^\infty \abs{\lambda_j}<\infty$. Thus, if $a_j$ is a $\dot B^{p,p}_\theta$-atom for each~$j$, then the infinite sum $\sum_{j=1}^\infty \lambda_j a_j$ converges in the $L^{\tilde p}$-norm; thus, that sum denotes a unique locally $L^1$ function.

If $a$ is a $\dot B^{p,p}_{\theta-1}(\partial\Omega)$-atom for some $\pmin<p\leq 1$ and $\theta-1<0$, then for any smooth function~$\varphi$, we have that by the Poincar\'e inequality
\begin{equation*}\abs[bigg]{\int_{\partial\Omega} \varphi\,a\,d\sigma}\leq C\doublebar{\nabla\varphi}_{L^{\tilde p'}(B(x_0,r)\cap\partial\Omega)},\end{equation*}
where again $\tilde p=p\pdmnMinusOne/(\dmnMinusOne-p\theta)$ and where $1/\tilde p+1/\tilde p'=1$.
Thus, such atoms may be viewed as distributions. If $\sum_{j=1}^\infty \abs{\lambda_j}<\infty$, and if $a_j$ is an atom for each~$j$, then the infinite sum $\sum_{j=1}^\infty \lambda_j a_j$ converges to a distribution (that is, the sum $\sum_{j=1}^\infty \lambda_j \langle \varphi,a_j\rangle_{\partial\Omega}$ converges absolutely for any smooth function~$\varphi$).
\end{rmk}

\begin{rmk}\label{rmk:quasi-Banach} 
If $0<\theta<1$ and $\pmin<p\leq \infty$, then $\dot B^{p,p}_\theta(\partial\Omega)$ and $\dot B^{p,p}_{\theta-1}(\partial\Omega)$ are quasi-Banach spaces; if $p\geq 1$ they are Banach spaces.

%
%
\end{rmk}

\begin{rmk}
The duality characterization of the negative smoothness spaces for $p>1$ is well known; see, for example, \cite[Sections~2.11 and~5.2.5]{Tri83}. Recall that in some sense Neumann boundary data is dual to Dirichlet boundary data, and so a duality characterization is appropriate. However, the space $\dot B^{p,p}_{\theta-1}(\R^\dmnMinusOne)$, for $p\leq 1$, is not the dual of a naturally arising space; thus we need an alternative characterization.
The atomic characterization comes from the atomic decomposition of Frazier and Jawerth in \cite{FraJ85}. If $p\leq 1$, then atomic characterizations are very convenient, and so we use them to define $\dot B^{p,p}_\theta(\partial\Omega)$ as well as $\dot B^{p,p}_{\theta-1}(\partial\Omega)$. Atoms are less convenient in the case $p>1$, and so in this case we use another characterization.
The norm \eqref{eqn:Slobodekij} comes from the definition of Slobodekij spaces, one of many function spaces that may be realized as a special case of Besov or Triebel-Lizorkin spaces; see \cite[Section~5.2.3]{Tri83}. 
\end{rmk}

\begin{rmk}\label{rmk:p=1} If $p=1$ and $0<\theta<1$, then we shall see that the atomic norm and the norm~\eqref{eqn:Slobodekij} are equivalent. 
Specifically, in Remark~\ref{rmk:whitney:0:besov} we shall see that $\dot B^{p,p}_\theta(\partial\Omega)=\dot W\!A^p_{0,\theta}(\partial\Omega)$, where the Whitney space $\dot W\!A^p_{m-1,\theta}(\partial\Omega)$ will be defined in Definition~\ref{dfn:whitney}. The $m=1$, $p=1$ case of  Theorem~\ref{thm:extension} will imply that if $ \varphi \in \dot W\!A^1_{0,\theta}(\partial\Omega)$ then $\varphi = \Trace^\Omega \Phi$ for some $\Phi\in \dot W^{1,\theta,q}_{1,av}(\Omega)$ that satisfies both of the inequalities 
\begin{align*}\doublebar{\Phi}_{\dot W^{1,\theta,q}_{1,av}(\Omega)}
&\leq C \int_{\partial\Omega} \int_{\partial\Omega} \frac{\abs{ \varphi(x)-\varphi(y)}}{\abs{x-y}^{\dmnMinusOne+\theta}}\,d\sigma(x)\,d\sigma(y)
,\\
\doublebar{\Phi}_{\dot W^{1,\theta,q}_{1,av}(\Omega)}
&\leq C \inf\Bigl\{\sum_j\abs{\lambda_j}: \varphi= c_0+\sum_j \lambda_j\, a_j,\> c_0\text{ constant},\>  a_j\text{  atoms}\Bigr\}.\end{align*}
The $m=1$, $p=1$ case of Theorem~\ref{thm:trace} will establish the converses, that is, that if $\Phi\in {\dot W^{1,\theta,q}_{1,av}(\Omega)}$ then 
\begin{align*}\int_{\partial\Omega} \int_{\partial\Omega} \frac{\abs{\Trace^\Omega \Phi(x)-\Trace^\Omega \Phi(y)}}{\abs{x-y}^{\dmnMinusOne+\theta}}\,d\sigma(x)\,d\sigma(y)
&\leq C \doublebar{\Phi}_{\dot W^{1,\theta,q}_{1,av}(\Omega)}
,\\
\inf\Bigl\{\sum_j\abs{\lambda_j}:\Trace^\Omega \Phi= c_0+\sum_j \lambda_j\, a_j,\> c_0\text{ constant},\>  a_j\text{  atoms}\Bigr\}
&\leq C\doublebar{\Phi}_{\dot W^{1,\theta,q}_{1,av}(\Omega)}
.\end{align*}
Combining these results yields the equivalence of norms
\begin{multline*}\int_{\partial\Omega} \int_{\partial\Omega} \frac{\abs{ \varphi(x)-\varphi(y)}}{\abs{x-y}^{\dmnMinusOne+\theta}}\,d\sigma(x)\,d\sigma(y) \approx \doublebar{\Phi}_{\dot W^{1,\theta,q}_{1,av}(\Omega)}
\\\approx
\inf\Bigl\{\sum_j\abs{\lambda_j}: \varphi= c_0+\sum_j \lambda_j\, a_j,\> c_0\text{ constant},\>  a_j\text{  atoms}\Bigr\}\end{multline*}
for any $\varphi$ such that either side is finite.

Although we shall not use this fact, we mention that it is possible to establish this equivalence in other ways: controlling the norm~\eqref{eqn:Slobodekij} by the atomic norm is straightforward if $p\leq1$, and the reverse implication in the case where $\Omega$ is a half-space and so $\partial\Omega=\R^\dmnMinusOne$ denotes Euclidean space is a main result of \cite{FraJ85}.
\end{rmk}


Now, recall that we seek spaces of Dirichlet traces $\{\Tr_{m-1}^\Omega  u:  u\in \dot W_{m,av}^{p,\theta,q}(\Omega)\}$; in particular, we seek spaces of boundary data that may be extended to such functions. But if $m\geq 2$, then $\Tr_{m-1}^\Omega u$ is not a function; it is an array of functions that must satisfy certain compatibility conditions. Thus, if $r$ is the number of multiindices $\gamma$ of length $m-1$, we do not expect to be able to extend an arbitrary element of $(\dot B^{p,p}_\theta(\partial\Omega))^r$ to a $\dot W_{m,av}^{p,\theta,q}(\Omega)$-function; extension will only be possible in a distinguished subspace, called a Whitney-Besov space.

\begin{dfn}\label{dfn:whitney} Suppose that $\Omega\subset\R^\dmn$ is a Lipschitz domain, and  consider arrays of functions $\arr f=\begin{pmatrix}f_\gamma\end{pmatrix}_{\abs{\gamma}={m-1}}$, where $f_\gamma:\partial\Omega\mapsto \C$.

If $0<\theta<1$ and 
$\pdmnMinusOne/(\dmnMinusOne+\theta)< p<\infty$, 
then we let the homogeneous Whitney-Besov space $\dot W\!A^p_{m-1,\theta}(\partial\Omega)$ be the closure of the set of arrays
\begin{equation}\label{eqn:Whitney:dense}
\bigl\{\arr\psi=\Tr_{m-1}^\Omega\Psi :\nabla^m\Psi\in L^\infty(\R^\dmn),\>\Psi \text{ compactly supported}\bigr\}\end{equation}
in $\dot B^{p,p}_\theta(\partial\Omega)$, under the (quasi)-norm
\begin{equation*}
\doublebar{\arr \psi}_{\dot W\!A^p_{m-1,\theta}(\partial\Omega)}
=
\sum_{\abs{\gamma}={m-1}} \doublebar{\psi_\gamma}_{\dot B^{p,p}_\theta(\partial\Omega)}
.\end{equation*}
Notice that $\dot W\!A^p_{m-1,\theta}(\partial\Omega)$ is a subspace of $(\dot B^{p,p}_{\theta}(\partial\Omega))^r$, where $r$ is the number of multiindices~$\gamma$ of length~${m-1}$.

If $0<\theta<1$ and $p=\infty$, then we let 
$\dot W\!A^p_{m-1,\theta}(\partial\Omega)=\dot W\!A^\infty_{m-1,\theta}(\partial\Omega)$ be the set of arrays
\begin{equation*}
\bigl\{\arr\psi=\Tr_{m-1}^\Omega\Psi :\nabla^{m-1}\Psi\in \dot C^\theta(\Omega)\bigr\}
\end{equation*}
equipped with the norm 
\begin{equation*}\doublebar{\arr\psi}_{\dot W\!A^\infty_{m-1,\theta}(\partial\Omega)}=
\doublebar{\arr \psi}_{\dot B^{\infty,\infty}_\theta(\partial\Omega)} =
\sup_{\abs{\gamma}={m-1}}
\sup_{\substack{x\neq y\\ x,y\in\partial\Omega} } \frac{\abs{\psi_\gamma(x)-\psi_\gamma(y)}}{\abs{x-y}^\theta}.
\end{equation*}


When no ambiguity arises we will omit the $m-1$ subscript. 
\end{dfn}

\begin{rmk}\label{rmk:whitney:0:besov}
We remark that if $m=1$ then $\dot W\!A^p_{m-1,\theta}(\partial\Omega)=\dot W\!A^p_{0,\theta}(\partial\Omega)=\dot B^{p,p}_\theta(\partial\Omega)$. 

The relation $\dot W\!A^p_{0,\theta}(\partial\Omega)\subseteq\dot B^{p,p}_\theta(\partial\Omega)$ is clear from the definition. Thus we need only show the reverse inclusion.

If $p=\infty$, the reverse inclusion is merely the statement that any H\"older continuous function defined on~$\partial\Omega$ has a H\"older continuous extension to $\R^\dmn$. If $p\leq 1$ and $0<\theta<1$, then all atoms lie in the space given in formula~\eqref{eqn:Whitney:dense} and so this space is dense in $\dot B^{p,p}_\theta(\partial\Omega)$ as well as $\dot W\!A^p_{0,\theta}(\partial\Omega)$. Finally, if $1<p<\infty$ then the argument that functions with bounded derivative (and in fact smooth functions) are dense in $\dot B^{p,p}_\theta(\partial\Omega)$ is similar to the argument that they are dense in $L^p(\partial\Omega)$.
\end{rmk}

We are also interested in the spaces of Neumann traces of (divergence-free) arrays $\arr G\in L^{p,\theta,q}_{av}(\Omega)$. Recall that in this case, the main complication is that $\M_m^\Omega\arr G$ is only defined up to adding arrays $\arr g$ that satisfy $\langle\Tr_{m-1}^\Omega\varphi,\arr g\rangle_{\partial\Omega}=0$. This may be dealt with by simply defining $\dot N\!A^p_{\theta-1}(\partial\Omega)$ as a quotient space.

\begin{dfn}\label{dfn:whitney:neumann} 
Let $\Omega\subset\R^\dmn$ be a Lipschitz domain with connected boundary, let $0<\theta<1$, and let $\pmin<p\leq\infty$. Let $r$ be the number of multiindices of length~${m-1}$.

Then $\dot N\!A^{p}_{\theta-1}(\partial\Omega)=\dot N\!A^{p}_{m-1,\theta-1}(\partial\Omega)$ is the quotient space of $(\dot B^{p,p}_{\theta-1}(\partial\Omega))^r$ under the equivalence relation
\begin{equation*}\arr g\equiv\arr h \text{ if and only if } \langle \Tr_{m-1}^\Omega\varphi,\arr g\rangle_{\partial\Omega} = \langle \Tr_{m-1}^\Omega\varphi,\arr h\rangle_{\partial\Omega} \text{ for all $\varphi\in C^\infty_0(\R^\dmn)$.}\end{equation*}
\end{dfn}

Observe that by the duality or atomic characterization of $\dot B^{p,p}_{\theta-1}(\partial\Omega)$, if $\varphi$ is smooth and compactly supported then $\abs{ \langle \Tr_{m-1}^\Omega\varphi,\arr g\rangle_{\partial\Omega}}<\infty$ for all $\arr g\in \dot B^{p,p}_{\theta-1}(\partial\Omega)$; thus, this equivalence relation is meaningful in $(\dot B^{p,p}_{\theta-1}(\partial\Omega))^r$.

\begin{rmk} \label{rmk:neumann-whitney:p>1} We have an alternative characterization of $\dot N\!A^{p}_{\theta-1}(\partial\Omega)$ in the case $p>1$. In this case, $1\leq p'<\infty$, and by the definitions of $\dot B^{p,p}_{\theta-1}(\partial\Omega)$ and of $\dot W\!A^{p'}_{1-\theta}(\partial\Omega)$, we have that $\dot N\!A^{p}_{\theta-1}(\partial\Omega)$ is the dual space to $\dot W\!A^{p'}_{1-\theta}(\partial\Omega)$.
\end{rmk}

\section{Properties of function spaces}
\label{sec:L}

In this section we will establish a few properties of the spaces {$L_{av}^{p,\theta,q}(\Omega)$}; we will need these results to establish the trace and extension results of Sections~\ref{sec:extension:dirichlet}--\ref{sec:trace:neumann}.

Let $\Omega$ be a Lipschitz domain, and let $\mathcal{W}$ be a grid of dyadic Whitney cubes; then $\Omega=\cup_{Q\in\mathcal{W}} Q$, the cubes in~$\mathcal{W}$ have pairwise-disjoint interiors, and if $Q\in\mathcal{W}$ then the side-length $\ell(Q)$ satisfies $\ell(Q)\approx\dist(Q,\partial\Omega)$.

If $\arr H\in L_{av}^{p,\theta,q}(\Omega)$ for $0<p<\infty$, $\theta\in\R$ and $1\leq q\leq \infty$, then
\begin{equation}
\label{eqn:L:norm:whitney}
\doublebar{\arr H}_{L_{av}^{p,\theta,q}(\Omega)}
\approx \biggl( \sum_{Q\in\mathcal{W}} \biggl(\fint_Q \abs{\arr H}^q\biggr)^{p/q} \ell(Q)^{\dmnMinusOne+p-p\theta}\biggr)^{1/p}
\end{equation}
where the comparability constants depend on~$\Omega$, $p$, $q$, $\theta$, and the comparability constants for Whitney cubes in the relation $\ell(Q)\approx\dist(Q,\partial\Omega)$. (This equivalence is still valid in the case $p=\infty$ if we replace the sum over cubes by an appropriate supremum.) Notice that this implies that we may replace the balls $B(x,\dist(x,\partial\Omega)/2)$ in the definition \eqref{eqn:norm:L} of $L_{av}^{p,\theta,q}(\Omega)$ by balls $B(x,a\dist(x,\partial\Omega))$ for any $0<a<1$, and produce an equivalent norm.

This gives us a number of results. First, if $p=q$ then $L_{av}^{p,\theta,p}(\Omega)$ is the weighted but not averaged Sobolev space given by
\begin{equation} \doublebar{\arr H}_{L_{av}^{p,\theta,p}(\Omega)}\approx \biggl(\int_\Omega \abs{\arr H(x)}^p\,\dist(x,\partial\Omega)^{p-1-p\theta}\,dx\biggr)^{1/p}. \end{equation}
In particular, if $\theta=1-1/p$ then $L_{av}^{p,1-1/p,p}(\Omega)=L^p(\Omega)$.

Second, if $1\leq q<\infty$ and $1\leq p< \infty$, then we have the duality relation
\begin{equation}
(L_{av}^{p,\theta,q}(\Omega))^*=L_{av}^{p',1-\theta,q'}(\Omega)
\end{equation}
where $1/p+1/p'=1/q+1/q'=1$. 

The final result we will prove in this section generalizes a result of \cite{BarM16A}, in which the spaces $L_{av}^{p,\theta,q}(\R^\dmn_+)$, where $\R^\dmn_+$ is the upper half-space, were investigated.

To state this result, we establish some notation. 
Suppose that $V=\{(x',t):t>\psi(x')\}$ is a Lipschitz graph domain. For each cube $Q\subset\R^\dmnMinusOne$, define
\begin{align}
\label{eqn:tent}
T(Q)&=\{(x',t):x'\in Q,\psi(x')<t<\psi(x')+8\ell(Q)\},
\\
\label{eqn:whitney}
W(Q)&=\{(x',t):x'\in Q,\psi(x')+4\ell(Q)<t<\psi(x')+8\ell(Q)\}
.\end{align}

The regions $W(Q)$ and $T(Q)$ are shown in Figure~\ref{fig:W}.

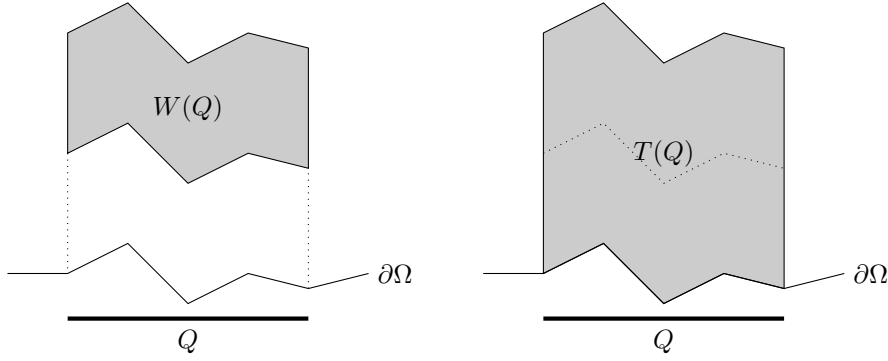
\begin{figure}
\begin{center}
\def\shape{{(1,1)},{(1,-2)},{(1,1)},{(1,-0.5)}}
\def\verticalShift{(0,4)}

\begin{tikzpicture}[xscale=0.8, yscale = 0.4]

	\foreach \point [count=\n] in \shape {
			\node[coordinate] (d-\n) at \point {};
			}
	\pgfmathtruncatemacro{\size}{\n}

	\draw[fill=white!80!black] \verticalShift 
		  \foreach \i in {1,...,\size} {-- ++(d-\i)} 
		  -- ++\verticalShift 
		  \foreach \i in {\size,...,1} {-- ++($-1*(d-\i)$)} 
		  -- cycle; 
	\node at (2,5.5) {$W(Q)$};

	\draw (-1,0)--(0,0) \foreach \i in {1,...,\size} {-- ++(d-\i)}--(5,0) node [right] {$\partial\Omega$};

	\begin{scope}[shift={(0,-3/2)}]
	\draw [ultra thick] (0,0) -- node [below] {$Q$} (4,0);
	\end{scope}

	\draw[dotted] \verticalShift--(0,0); 
	\path (0,0) \foreach \i in {1,...,\size} {-- ++(d-\i)} coordinate (w);
	\draw[dotted] (w) -- ++\verticalShift;


\end{tikzpicture}
\qquad
\begin{tikzpicture}[xscale=0.8, yscale = 0.4]

	\foreach \point [count=\n] in \shape {
			\node[coordinate] (d-\n) at \point {};
			}
	\pgfmathtruncatemacro{\size}{\n}

	\draw[fill=white!80!black] (0,0)
		  \foreach \i in {1,...,\size} {-- ++(d-\i)} 
		  -- ++\verticalShift 
		  -- ++\verticalShift 
		  \foreach \i in {\size,...,1} {-- ++($-1*(d-\i)$)} 
		  -- cycle; 
	\node at (2,4) {$T(Q)$};

	\draw (-1,0)--(0,0) \foreach \i in {1,...,\size} {-- ++(d-\i)}--(5,0) node [right] {$\partial\Omega$};

	\begin{scope}[shift={(0,-3/2)}]
	\draw [ultra thick] (0,0) -- node [below] {$Q$} (4,0);
	\end{scope}

	\draw[dotted] \verticalShift
		  \foreach \i in {1,...,\size} {-- ++(d-\i)} ;

\end{tikzpicture}
\caption{The regions $W(Q)\subset T(Q)$ and $T(Q)$. (The vertical axis has been compressed.)}
\label{fig:W}
\end{center}

\end{figure}

If $j$ is an integer, let $\mathcal{H}_j$ 
be the set of all open cubes in $\R^\dmnMinusOne$ of side-length $2^j$ whose vertices are integer multiples of $2^j$. Then the cubes in $\mathcal{H}_j$ are pairwise-disjoint and $\cup_{Q\in\mathcal{H}_j} \overline{Q}=\R^\dmnMinusOne$. Let $\mathcal{H}=\cup_{j=-\infty}^\infty \mathcal{H}_j$.

We claim that $\{W(Q):Q\in\mathcal{H}\}$ has many of the useful properties of a decomposition of $V$ into Whitney cubes (as in the norm~\eqref{eqn:L:norm:whitney}). It is clear that the diameter of $W(Q)$ is comparable to the distance from $W(Q)$ to~$\partial V$. We claim that if $Q$, $R\in \mathcal{H}$ with $Q\neq R$ then $W(Q)$ and $W(R)$ are disjoint, and that $V=\cup_{Q\in\mathcal{H}}\overline{W(Q)}$.

To see this, observe that if $Q\in \mathcal{H}_j$ for some integer~$j$ then $W(Q)= \{(x,t)\in V: x\in Q,\>\psi(x)+2^{j+2}< t < \psi(x)+2^{j+3}\}$. If $Q\in \mathcal{H}_j$ and $R\in\mathcal{H}_k$ for some integers $j\neq k$, then $W(Q)$ and $W(R)$ are clearly disjoint; otherwise, $Q\in \mathcal{H}_j$ and $R\in \mathcal{H}_j$ and so $Q$ and $R$ are disjoint, and thus $W(Q)$, $W(R)$ are disjoint.

Furthermore, $\cup_{Q\in \mathcal{H}_j} \overline{W(Q)} = \{(x,t)\in V: \psi(x)+2^{j+2}\leq t \leq \psi(x)+2^{j+3}\}$, and so $V=\cup_{j=-\infty}^\infty \cup_{Q\in \mathcal{H}_j} \overline{W(Q)} =\cup_{Q\in\mathcal{H}}\overline{W(Q)}$.

Thus, the set $\{W(Q):Q\in\mathcal{H}\}$ has many of the useful properties of a decomposition into Whitney cubes. In particular, we have a result similar to the estimate~\eqref{eqn:L:norm:whitney} in terms of such regions: if $\arr H\in {L_{av}^{p,\theta,q}(\Omega)}$, then
\begin{equation}\label{eqn:L:norm:dyadic}
\doublebar{\arr H}_{L_{av}^{p,\theta,q}(\Omega)}^p
\approx
\sum_{Q\in\mathcal{H}} \biggl(\fint_{W(Q)} \abs{\arr H}^q\biggr)^{p/q} \ell(Q)^{\dmnMinusOne+p-p\theta}
.\end{equation}

The following result states essentially that we may replace the sets $W(Q)$ by the sets $T(Q)$ in the norm~\eqref{eqn:L:norm:dyadic}. In particular, this implies that the integral over a tent $T(Q)$ is finite, and so
$L_{av}^{p,\theta,q}(V)$-functions are locally integrable up to the boundary; this second result extends from Lipschitz graph domains $V$ to general Lipschitz domains~$\Omega$.

\begin{lem}\label{lem:L:L1}
Let $V$ be a Lipschitz graph domain and let $\mathcal{H}$, $T(Q)$, and $W(Q)$ be as above.


Let $\theta\in\R$. Then, if $0<p\leq q$ and $1/q> (\dmnMinusOne+p-p\theta)/\pdmn p$, or if $0<q\leq p$ and $1/q>1-\theta$, then
\begin{equation}\label{eqn:L:L1:3}
\sum_{Q\in\mathcal{H}} \biggl(\int_{T(Q)} \abs{\arr H}^{q}\biggr)^{p/q} \ell(Q)^{\dmnMinusOne+p-p\theta-(p/q)\pdmn}
\approx
\doublebar{\arr H}_{L_{av}^{p,\theta,q}(V)}^p
.\end{equation}
In particular, if $\theta>0$, $\pmin<p\leq\infty$, and $q\geq 1$, then
\begin{equation}\label{eqn:L:L1:2}
\sum_{Q\in\mathcal{H}} \biggl(\int_{T(Q)} \abs{\arr H}\biggr)^p \ell(Q)^{\dmnMinusOne-p\theta-p\pdmnMinusOne}
\approx
\doublebar{\arr H}_{L_{av}^{p,\theta,1}(V)}^p
\leq \doublebar{\arr H}_{L_{av}^{p,\theta,q}(V)}^p
.\end{equation}

More generally, suppose that $\Omega\subset\R^\dmn$ is a Lipschitz domain, and that 
$\theta>0$, $q\geq 1$, and $\pmin<p\leq\infty$. 
If $\arr H\in L_{av}^{p,\theta,q}(\Omega)$, if $x_0\in\partial\Omega$, and if $R>0$, then
\begin{equation}\label{eqn:L:L1:1}
\doublebar{\arr H}_{L^1(B(x_0,R)\cap\Omega)} 
\leq C\doublebar{\arr H}_{L_{av}^{p,\theta,q}(\Omega)} R^{\dmnMinusOne+\theta-\pdmnMinusOne/p} 
.\end{equation}

\end{lem}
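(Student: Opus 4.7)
The geometric observation behind the lemma is the identity
\begin{equation*}
T(Q)\;=\;\bigcup_{\substack{R\in\mathcal{H},\;R\subseteq Q\\\ell(R)\le\ell(Q)}} W(R),\qquad Q\in\mathcal{H},
\end{equation*}
valid up to a set of measure zero; it follows from \eqref{eqn:tent}--\eqref{eqn:whitney} because if $Q\in\mathcal{H}_k$ and $(x,t)\in T(Q)$, then $t-\psi(x)\in(0,8\cdot 2^k)$ sits in a unique slab $2^{j+2}<t-\psi(x)<2^{j+3}$ with $j\le k$, hence in $W(R)$ for the unique $R\in\mathcal{H}_j$ containing~$x$, and nesting of the dyadic grids forces $R\subseteq Q$. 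Writing $a_R=\int_{W(R)}\abs{\arr H}^q$ and $\alpha=\dmnMinusOne+p-p\theta-(p/q)\dmn$, the characterization \eqref{eqn:L:norm:dyadic} reads $\doublebar{\arr H}_{L^{p,\theta,q}_{av}(V)}^p\approx\sum_R a_R^{p/q}\ell(R)^\alpha$, so \eqref{eqn:L:L1:3} reduces to showing
\begin{equation*}
\sum_{Q\in\mathcal{H}}\biggl(\sum_{R\subseteq Q,\,\ell(R)\le\ell(Q)}a_R\biggr)^{p/q}\ell(Q)^\alpha\;\approx\;\sum_{R\in\mathcal{H}} a_R^{p/q}\ell(R)^\alpha.
\end{equation*}
The $\gtrsim$ direction is immediate from $W(Q)\subseteq T(Q)$.

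For the $\lesssim$ direction I split on $p$ versus~$q$. When $p\le q$, subadditivity $(\sum_R a_R)^{p/q}\le\sum_R a_R^{p/q}$ (valid since $p/q\le 1$) followed by swapping sums reduces matters to $\sum_{Q\supseteq R,\,\ell(Q)\ge\ell(R)}\ell(Q)^\alpha$, a geometric series with one term per dyadic scale~$\ge\ell(R)$; it converges to $\approx\ell(R)^\alpha$ exactly when $\alpha<0$, which rearranges to the stated hypothesis $1/q>(\dmnMinusOne+p-p\theta)/(\dmn p)$. When $q\le p$, I would instead apply H\"older's inequality with exponents $p/q$ and $p/(p-q)$ against an auxiliary weight $\ell(R)^\beta$, giving
\begin{equation*}
\biggl(\sum_{R\subseteq Q}a_R\biggr)^{p/q}\le \biggl(\sum_{R\subseteq Q}a_R^{p/q}\ell(R)^{\beta p/q}\biggr)\biggl(\sum_{\substack{R\subseteq Q\\\ell(R)\le\ell(Q)}}\ell(R)^{-\beta p/(p-q)}\biggr)^{(p-q)/q}.
\end{equation*}
Since $\mathcal{H}_j$ contributes $(\ell(Q)/2^j)^\dmnMinusOne$ subcubes of a given $Q\in\mathcal{H}_k$, the second factor is a geometric series comparable to $\ell(Q)^{-\beta p/q}$ provided $\beta<-(\dmnMinusOne)(p-q)/p$. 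Multiplying by $\ell(Q)^\alpha$, summing over~$Q$, and swapping order yields $\sum_R a_R^{p/q}\ell(R)^{\beta p/q}\sum_{Q\supseteq R}\ell(Q)^{\alpha-\beta p/q}$, whose inner series is $\approx\ell(R)^{\alpha-\beta p/q}$ provided $\beta>\alpha q/p$; the total collapses to $C\sum_R a_R^{p/q}\ell(R)^\alpha$. The admissible interval $(\alpha q/p,\,-(\dmnMinusOne)(p-q)/p)$ for~$\beta$ is nonempty precisely when $(1-\theta)q<1$, i.e., $1/q>1-\theta$.

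Estimate \eqref{eqn:L:L1:2} is the $q=1$ specialization of \eqref{eqn:L:L1:3}: the condition $1>(\dmnMinusOne+p-p\theta)/(\dmn p)$ rearranges to $p>\pmin$ (covering $p\le 1$) and $1>1-\theta$ is $\theta>0$ (covering $p\ge 1$), both of which are in force; and $\doublebar{\arr H}_{L^{p,\theta,1}_{av}(V)}\le\doublebar{\arr H}_{L^{p,\theta,q}_{av}(V)}$ is Jensen's inequality applied to the inner average. For \eqref{eqn:L:L1:1} in the Lipschitz graph case, pick $Q\in\mathcal{H}$ with $\ell(Q)\approx R$ and $B(x_0,R)\cap V\subseteq T(Q)$; a single term of \eqref{eqn:L:L1:2} reads $(\int_{T(Q)}\abs{\arr H})^p\ell(Q)^{\dmnMinusOne-p\theta-p\pdmnMinusOne}\le C\doublebar{\arr H}_{L^{p,\theta,q}_{av}(V)}^p$, which solves to $\int_{T(Q)}\abs{\arr H}\le C\doublebar{\arr H}_{L^{p,\theta,q}_{av}(V)}R^{\dmnMinusOne+\theta-\pdmnMinusOne/p}$, as required. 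For a general Lipschitz $\Omega$, a finite covering of $B(x_0,R)\cap\Omega$ by the cylinders of Definition~\ref{dfn:domain} (for $R\lesssim r_\Omega$; for larger~$R$, $\Omega$ is bounded with $\diam\Omega\approx r_\Omega$ and the bound follows trivially from the $R\approx r_\Omega$ case) reduces to finitely many graph pieces, since inside each cylinder $\Omega$ and $V_j$ coincide and have comparable distance-to-boundary functions. The main technical obstacle is the H\"older argument in the $q\le p$ case: the exponent $\beta$ must land in a narrow interval whose nonemptiness is precisely equivalent to the hypothesis $1/q>1-\theta$; the rest is dyadic bookkeeping.
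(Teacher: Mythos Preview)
Your argument for \eqref{eqn:L:L1:3} is correct and is in fact a genuine contribution beyond what the paper does: the paper simply cites \cite[Theorem~6.1]{BarM16A} for the half-space and pulls it back to a graph domain by the change of variables $(x',t)\mapsto(x',t-\psi(x'))$, whereas you supply a self-contained proof directly in the graph domain via the dyadic identity $T(Q)=\bigcup_{R\subseteq Q}W(R)$ and a discrete Hardy-type inequality. Your case split ($p\le q$ via subadditivity, $q\le p$ via H\"older with an auxiliary weight $\ell(R)^\beta$) is the standard way to run this, and your computation of the admissible window for~$\beta$ is correct; the equivalence $\alpha q/p<-(d-1)(p-q)/p\iff (1-\theta)q<1$ checks out. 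Your derivation of \eqref{eqn:L:L1:2} from the $q=1$ case, and the single-tent extraction for \eqref{eqn:L:L1:1} in the graph case, are also fine (strictly one may need a bounded number of tents to cover $B(x_0,R)\cap V$, since dyadic cubes have fixed positions, but this is harmless).

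The one genuine gap is in your treatment of general Lipschitz~$\Omega$ for large~$R$: you assert that ``$\Omega$ is bounded with $\diam\Omega\approx r_\Omega$,'' but Definition~\ref{dfn:domain} allows $\Omega$ to be an exterior domain (unbounded with compact boundary), so this is false in general. In that situation you still need to bound $\int_{B(x_0,R)\cap\Omega}\abs{\arr H}$ over the part of the ball far from~$\partial\Omega$. The paper handles this by appealing directly to the Whitney-cube form~\eqref{eqn:L:norm:whitney} of the norm together with a count of the Whitney cubes of each side-length that can meet $B(x_0,R)$; you should supply the analogous one-line estimate (sum over scales $2^j\lesssim R$, at most $C(R/2^j)^{\dmn}$ cubes per scale, each contributing $\lesssim 2^{j(\theta-1+\pdmnMinusOne/p+\dmn)}\doublebar{\arr H}_{L^{p,\theta,q}_{av}}$), which converges under your hypotheses.
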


\begin{proof}
If $\Omega=\R^\dmn_+$ is a half-space, then the bound~\eqref{eqn:L:L1:3} is \cite[Theorem~6.1]{BarM16A}, and the bound \eqref{eqn:L:L1:1} follows immediately. Let $\psi$ be a Lipschitz function; by making the change of variables $(x',t)\mapsto(x',t-\psi(x'))$, we see that the lemma is still true in the domain $\Omega=\{(x',t):t>\psi(x')\}$, that is, in any Lipschitz graph domain.

There remains the case where $\Omega$ is a domain with compact boundary. (In this case we prove only the estimate~\eqref{eqn:L:L1:1}, and not the estimates \eqref{eqn:L:L1:3} or~\eqref{eqn:L:L1:2}.) 
We may control the $L^1$ norm of~$\arr H$ near $\partial\Omega$ using the bound for Lipschitz graph domains. If $R$ is sufficiently small (compared with the natural length scale $r=r_\Omega$ of Definition~\ref{dfn:domain}), this completes the proof. 

If $R>r_\Omega/C$, then we may control the $L^1$ norm of~$\arr H$ far from $\partial\Omega$ by using the norm~\eqref{eqn:L:norm:whitney} and the observation that there are at most $C(1+r_\Omega/2^j)^\dmn$ dyadic Whitney cubes of side-length $2^j$. 
\end{proof}

We have shown that if $0<\theta<1$ and $\pmin<p$, then $\dot W_{m,av}^{p,\theta,q}(\Omega)$-functions are necessarily $\dot W^1_{m,loc}(\overline\Omega)$-functions, and so $\Tr_{m-1}^\Omega u$ and $\M_m^\Omega \arr G$ are meaningful if $u\in\dot W_{m,av}^{p,\theta,q}(\Omega)$ and $\arr G \in L_{av}^{p,\theta,q}(\Omega)$. 

If $\theta\leq 0$ or $p\leq \pmin$, then this is not true and so trace theorems are not meaningful.
Conversely, if $\theta\geq 1$, then $\Tr_{m-1}^\Omega \vec u$ is constant for all $\vec u \in \dot W_{m,av}^{p,\theta,p}(\Omega)$, and so we do not expect an interesting theory of traces of functions $\vec u \in \dot W_{m,av}^{p,\theta,q}(\Omega)$.

Thus, for the remainder of this paper, we will only consider $\theta\in (0,1)$ and $p>\pmin$.

%
%

\subsection{Density of smooth functions in weighted averaged spaces}

The main result of this section is Theorem~\ref{thm:smooth:dense}, the density of smooth functions in the spaces $\dot W_{m,av}^{p,\theta,q}(\Omega)$. We will first prove the following Poincar\'e-style inequality; it will allow us to control the lower-order derivatives of a function in $\dot W_{m,av}^{p,\theta,q}(\Omega)$ by its $\dot W_{m,av}^{p,\theta,q}(\Omega)$-norm.

\begin{lem}\label{lem:Poincare:L}
Let $\Omega=\{(x',t):t>\psi(x')\}$ be a Lipschitz graph domain. Let $Q\subset\R^\dmnMinusOne$ be a cube, and let $T(Q)$, $W(Q)$ be as in formulas~\eqref{eqn:tent} and~\eqref{eqn:whitney}.
Suppose that 
$\1_{T(Q)}\nabla^m u \in L_{av}^{p,\theta,q}(\Omega)$. Let $u_Q$ be the polynomial of order $m-1$ that satisfies 
\begin{equation*}\fint_{W(Q)} \nabla^k (u-u_Q)=0\quad\text{for all integers $k$ with $0\leq k\leq m-1$}.\end{equation*}
If $1\leq q\leq\infty$ and $0<p\leq\infty$, then
\begin{equation}\label{eqn:poincare}
\doublebar{\1_{T(Q)}\nabla^k (u-u_Q)}_{L_{av}^{p,\theta,q}(\Omega)}
\leq C \ell(Q)^{m-k}\doublebar{\1_{T(Q)}\nabla^m u}_{L_{av}^{p,\theta,q}(\Omega)}.\end{equation}
If $p>\pmin$ and $\Tr_k^\Omega u=0$ along $\partial\Omega\cap\partial T(Q)$ for all $0\leq k\leq m-1$, then we have that
\begin{equation}\label{eqn:poincare:boundary}
\doublebar{\1_{T(Q)}\nabla^k u}_{L_{av}^{p,\theta,q}(\Omega)}
\leq C \ell(Q)^{m-k}\doublebar{\1_{T(Q)}\nabla^m u}_{L_{av}^{p,\theta,q}(\Omega)}.\end{equation}
\end{lem}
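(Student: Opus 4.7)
The approach is to reduce both estimates to the dyadic Whitney decomposition described before~\eqref{eqn:L:norm:dyadic}: the set of $R\in\mathcal{H}$ with $W(R)\subset T(Q)$ corresponds (up to bounded overlap) to dyadic subcubes of $Q$ at scales $\ell(R)\leq\ell(Q)$, so $\doublebar{\1_{T(Q)} w}_{L_{av}^{p,\theta,q}(\Omega)}^p \approx \sum_{R\subseteq Q} (\fint_{W(R)} \abs{w}^q)^{p/q}\ell(R)^{\dmnMinusOne+p-p\theta}$ for any locally integrable~$w$. To prove \eqref{eqn:poincare} I associate to each dyadic $R\subseteq Q$ the polynomial $P_R$ of degree at most $m-1$ characterized by $\fint_{W(R)} \nabla^j(u-P_R)=0$ for $0\leq j\leq m-1$, so that in particular $P_Q=u_Q$. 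The iterated classical Poincar\'e inequality on the convex region $W(R)$ yields the ``local'' bound $(\fint_{W(R)} \abs{\nabla^k(u-P_R)}^q)^{1/q}\leq C\ell(R)^{m-k}(\fint_{W(R)} \abs{\nabla^m u}^q)^{1/q}$, so it remains to control $\nabla^k(P_R-u_Q)$ on $W(R)$. I would do this by telescoping along the dyadic parent chain $R=R_0\subset R_1\subset\cdots\subset R_J=Q$ with $J\approx\log_2(\ell(Q)/\ell(R))$: for each consecutive pair, $W(R_i)$ and $W(R_{i+1})$ both sit inside a common enlarged Whitney region of scale $\ell(R_{i+1})$, so Poincar\'e on this enlargement controls $u-P_{R_i}$ and $u-P_{R_{i+1}}$ in $L^q$, and the equivalence of norms on the finite-dimensional space of polynomials of degree $\leq m-1$ converts these averaged bounds into a pointwise bound on $\nabla^k(P_{R_i}-P_{R_{i+1}})$ throughout $W(R_i)$. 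After telescoping, raising to the $p$th power (using $p$-subadditivity of $x\mapsto x^p$ when $p\leq 1$, and Minkowski when $p\geq 1$), inserting into the discrete norm, and reindexing the resulting double sum over pairs $(R,R_i)$, geometric series in scale converge because $0<\theta<1$, producing the prefactor $\ell(Q)^{p(m-k)}$.

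For \eqref{eqn:poincare:boundary} I would apply \eqref{eqn:poincare} and thereby reduce to bounding $\doublebar{\1_{T(Q)}\nabla^k u_Q}_{L_{av}^{p,\theta,q}(\Omega)}$. Since $u_Q$ is a polynomial of degree at most $m-1$, this norm is comparable to $\ell(Q)^A\,\abs{\nabla^k u_Q|_{W(Q)}}$ for an exponent $A$ read directly from the discrete sum above, and by the moment definition $\nabla^k u_Q|_{W(Q)}=\fint_{W(Q)} \nabla^k u$. The vanishing trace hypothesis then allows me to express $\nabla^k u(x)$, for $x\in W(Q)$, as an iterated integral of $\nabla^m u$ along a vertical segment running from a point of $\partial\Omega\cap\partial T(Q)$ (where all $\nabla^j u$, $j<m$, vanish in the trace sense) up to $x$, carried out $m-k$ times. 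The hypothesis $p>\pmin$ is precisely what Lemma~\ref{lem:L:L1} needs to guarantee local integrability of $\nabla^m u$ up to the boundary, which in turn makes this pointwise representation rigorous; applying \eqref{eqn:L:L1:2} to convert the resulting $L^1$ integral of $\nabla^m u$ on $T(Q)$ back into the $L_{av}^{p,\theta,q}$-norm closes the estimate.

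The principal obstacle is the chaining step in the proof of \eqref{eqn:poincare}: after telescoping and raising to the $p$th power, the double sum must reindex into a convergent geometric series in scale whose total correctly reproduces the clean factor $\ell(Q)^{p(m-k)}$, with no logarithmic losses in~$J$. The case $p<1$ is the most delicate, because Minkowski's inequality is unavailable and one must rely on $p$-subadditivity together with careful bookkeeping of the Poincar\'e exponents against the weight $\ell(R)^{\dmnMinusOne+p-p\theta}$.
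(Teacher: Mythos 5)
Your proposal is essentially the paper's proof, just with different bookkeeping: both rest on the dyadic decomposition of $T(Q)$ into the $W(R)$'s, the discrete norm \eqref{eqn:dyadic:norm:local}, a telescoping sum along the dyadic parent chain, and a convergent geometric series in scale powered by $p(1-\theta)>0$. The paper telescopes the scalar averages $u_{\gamma,R}=\fint_{W(R)}\partial^\gamma u$ inside a single-step induction in~$k$ rather than telescoping the full moment polynomials $P_R$ at once, but the two devices are interchangeable. The ``principal obstacle'' you flag does appear for $p>1$ as a factor $j^{p-1}$ from H\"older applied on the sequence side; the paper absorbs it by $j^{\max(0,p-1)}\le C2^{j(p-p\theta)/2}$ before summing, so there is indeed no uncontrolled logarithmic loss. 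The one place your outline as written is too loose is the reduction in part~\eqref{eqn:poincare:boundary}: $\nabla^k u_Q$ is a polynomial of degree $m-1-k$, not a constant (except when $k=m-1$), so $\doublebar{\1_{T(Q)}\nabla^k u_Q}_{L_{av}^{p,\theta,q}(\Omega)}$ is controlled by $\sup_{T(Q)}\abs{\nabla^k u_Q}$, which involves \emph{all} the moments $\fint_{W(Q)}\nabla^{k+j}u$ for $0\le j\le m-1-k$, not only the $j=0$ one as your identification $\nabla^k u_Q|_{W(Q)}=\fint_{W(Q)}\nabla^k u$ suggests. Each such moment must be bounded by $C\ell(Q)^{m-k-j}\fint_{T(Q)}\abs{\nabla^m u}$ (your vertical-segment formula, averaged in $x'$ over $Q$, gives exactly this for every $j$); the paper phrases this same step as an iterated Poincar\'e inequality over $T(Q)$ with vanishing boundary values, and only then does \eqref{eqn:L:L1:2} close the estimate.
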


\begin{proof}
We begin with the bound \eqref{eqn:poincare}.
Without loss of generality we assume $u_Q\equiv 0$. Choose some multiindex $\gamma$ with $\abs{\gamma}=k\leq m-1$, and for any cube~$R\subset\R^\dmnMinusOne$, let $u_{\gamma,R}=\fint_{W(R)} \partial^\gamma u$; notice that $u_{\gamma,Q}=0$. The $k=m$ case is immediate; we will use induction to generalize to $k<m$.

Let $\mathcal{G}_0=\{Q\}$, and for each $j>0$, let $\mathcal{G}_j$ be the set of open dyadic subcubes of $Q$ of side-length $2^{-j}\ell(Q)$; then $\abs{\mathcal{G}_j}=2^{j\pdmnMinusOne}$ and $\cup_{R\in\mathcal{G}_j} \overline R = \overline Q$.
Let $\mathcal{G}=\cup_{j=0}^\infty \mathcal{G}_j$. In particular, if $\mathcal{H}$ is as in Lemma~\ref{lem:L:L1} and $Q\in\mathcal{H}$, then $\mathcal{G}=\{R\in \mathcal{H}:R\subseteq Q\}$.

By formula~\eqref{eqn:L:norm:dyadic},
\begin{equation}\label{eqn:dyadic:norm:local}
\doublebar{\1_{T(Q)}\arr H}_{L_{av}^{p,\theta,q}(\Omega)}^p
\approx
\sum_{R\in\mathcal{G}} \biggl(\fint_{W(R)} \abs{\arr H}^q\biggr)^{p/q} \ell( R)^{\dmnMinusOne+p-p\theta}
.\end{equation}
We want to bound $\1_{T(Q)} \partial^\gamma u$. 
Because $q\geq 1$, we have that the triangle inequality in $L^q(W(R))$ is valid, and so if $r={\dmnMinusOne+p-p\theta}$, then
\begin{align*}
\sum_{R\in\mathcal{G}} \biggl(\fint_{W(R)} \abs{\partial^\gamma u}^q\biggr)^{p/q} \ell( R)^r
&\le
	\sum_{R\in\mathcal{G}} 
	\biggl(\biggl(\fint_{W(R)} \abs{\partial^\gamma u-u_{\gamma,R}}^q\biggr)^{1/q}+\abs{u_{\gamma,R}}\biggr)^p \ell( R)^r
.\end{align*}
By the Poincar\'e inequality, if $\ell(R)\leq \ell(Q)$ then
\begin{equation*}\fint_{W(R)} \abs{\partial^\gamma u-u_{\gamma,R}}^q
\leq C\ell(R)^q \fint_{W(R)} \abs{\nabla\partial^\gamma u}^q
\leq C\ell(Q)^q \fint_{W(R)} \abs{\nabla^{k+1}u}^q
\end{equation*}
and so
\begin{align*}
\sum_{R\in\mathcal{G}} \biggl(\fint_{W(R)} \abs{\partial^\gamma u}^q\biggr)^{p/q} \ell( R)^r
&\le
	\sum_{R\in\mathcal{G}} 
	\biggl(\ell(Q)\biggl(\fint_{W(R)} \abs{\nabla^{k+1}u}^q\biggr)^{1/q}+\abs{u_{\gamma,R}}\biggr)^p \ell( R)^r
.\end{align*}

If $p\geq 1$, then we may apply the triangle inequality in a sequence space to see that
\begin{multline*}
\biggl(\sum_{R\in\mathcal{G}} \biggl(\fint_{W(R)} \abs{\partial^\gamma u}^q\biggr)^{p/q} \ell( R)^r\biggr)^{1/p}
\\\le
	\ell(Q)\biggl(\sum_{ R\in\mathcal{G}} \biggl(\fint_{W(R)} \abs{\nabla^{k+1}u}^q\biggr)^{p/q} 
	\ell( R)^r\biggr)^{1/p}
	+\biggl(\sum_{ R\in\mathcal{G}} \abs{u_{\gamma,R}}^p
	\ell( R)^r\biggr)^{1/p}
.\end{multline*}
If $0<p<1$, then the triangle inequality is not valid; however, by Minkowski's inequality for sums, we have that $(a+b)^p\leq a^p+b^p$ for any positive numbers $a$ and~$b$, and so
\begin{multline*}
\sum_{R\in\mathcal{G}} \biggl(\fint_{W(R)} \abs{\partial^\gamma u}^q\biggr)^{p/q} \ell( R)^r
\\\le
	\ell(Q)^p\sum_{ R\in\mathcal{G}} \smash{\biggl(\fint_{W(R)} \abs{\nabla^{k+1}u}^q\biggr)^{p/q} }
	\ell( R)^r
	+\sum_{ R\in\mathcal{G}} \abs{u_{\gamma,R}}^p
	\ell( R)^r
.\end{multline*}

Applying the equivalence of norms~\eqref{eqn:dyadic:norm:local}, we have that if $p\geq 1$ then
\begin{align*}
\doublebar{\1_{T(Q)}\partial^\gamma u}_{L_{av}^{p,\theta,q}(\Omega)}
&\le	
	C\ell(Q) \doublebar{\1_{T(Q)}\nabla^{k+1} u}_{L_{av}^{p,\theta,q}(\Omega)}
	+
	C\biggl(\sum_{ R\in\mathcal{G}} \abs{u_{\gamma,R}}^p \ell(R)^r\biggr)^{1/p}
\end{align*}
and if $p\leq 1$ then
\begin{align*}
\doublebar{\1_{T(Q)}\partial^\gamma u}_{L_{av}^{p,\theta,q}(\Omega)}^p
\le	
	C\ell(Q)^p \doublebar{\1_{T(Q)}\nabla^{k+1} u}_{L_{av}^{p,\theta,q}(\Omega)}^p
	+
	C\sum_{ R\in\mathcal{G}} \abs{u_{\gamma,R}}^p \ell(R)^r
.\end{align*}

We are working by induction and so may assume  $\ell(Q)\doublebar{\1_{T(Q)}\nabla^{k+1} u}_{L_{av}^{p,\theta,q}(\Omega)}\leq C\ell(Q)^{m-k} \doublebar{\1_{T(Q)}\nabla^m u}_{L_{av}^{p,\theta,q}(\Omega)}$.
We  consider the second term.
If $R\in \mathcal{G}_j$ and $0\leq i\leq j$, let $P_i(R)$ be the unique cube in $\mathcal{G}_i$ with $R\subseteq P_i(R)$. Then
\begin{equation*}u_{\gamma,R}=
u_{\gamma,R}-u_{\gamma,Q} = 
\sum_{i=1}^j u_{\gamma,{P_i(R)}} - u_{\gamma,{P_{i-1}(R)}}.\end{equation*}
If $p\leq 1$ then 
\begin{equation*}\abs{u_{\gamma,R}}^p \leq  \sum_{i=1}^j \abs{u_{\gamma,{P_i(R)}} - u_{\gamma,{P_{i-1}(R)}}}^p\end{equation*}
while if $p\geq 1$, then by H\"older's inequality in sequence spaces,
\begin{equation*}\abs{u_{\gamma,R}}^p \leq  j^{p-1}\sum_{i=1}^j \abs{u_{\gamma,{P_i(R)}} - u_{\gamma,{P_{i-1}(R)}}}^p.\end{equation*}
Therefore,
\begin{equation*}
\sum_{R\in\mathcal{G}} \abs{u_{\gamma,R}}^p \ell( R)^r
\le	
	C
	\sum_{j=1}^\infty\sum_{i=1}^j
	\sum_{R\in\mathcal{G}_j}
	\abs{u_{\gamma,{P_i(R)}} - u_{\gamma,{P_{i-1}(R)}}}^p
	j^{\max(p-1,0)} \ell( R)^r
.\end{equation*}
If $R\in\mathcal{G}_j$, then $\ell(R)=2^{-j}\ell(Q)$, and so
\begin{equation*}
\sum_{R\in\mathcal{G}} \abs{u_{\gamma,R}}^p \ell( R)^r
\le	
	C\ell( Q)^r
	\sum_{j=1}^\infty\sum_{i=1}^j
	\sum_{R\in\mathcal{G}_j}
	\abs{u_{\gamma,{P_i(R)}} - u_{\gamma,{P_{i-1}(R)}}}^p
	j^{\max(p-1,0)} 2^{-jr}
.\end{equation*}
Notice that if $R\in \mathcal{G}_j$, then $P_i(R)\in\mathcal{G}_i$. We now wish to sum over $S=P_i(R)\in \mathcal{G}_i$ rather than over $R\in \mathcal{G}_j$. Each such $S$ satisfies $S=P_i(R)$ for $2^{\pdmnMinusOne(j-i)}$ cubes $R\in\mathcal{G}_j$; thus, 
\begin{equation*}
\sum_{R\in\mathcal{G}} \abs{u_{\gamma,R}}^p \ell( R)^r
\le	
	C\ell( Q)^r
	\sum_{j=1}^\infty\sum_{i=1}^j
	\sum_{S\in\mathcal{G}_i}
	\abs{u_{\gamma,S} - u_{\gamma,{P(S)}}}^p
	2^{\pdmnMinusOne(j-i)}
	j^{\max(p-1,0)} 2^{-jr}
\end{equation*}
where $P(S)$ is the dyadic parent of~$S$. Recalling that $r={\dmnMinusOne+p-p\theta}$, we see that
\begin{equation*}
\sum_{R\in\mathcal{G}} \abs{u_{\gamma,R}}^p \ell( R)^r
\le	
	C\ell( Q)^r
	\sum_{j=1}^\infty\sum_{i=1}^j
	\sum_{S\in\mathcal{G}_i}
	\abs{u_{\gamma,S} - u_{\gamma,{P(S)}}}^p
	2^{-i\pdmnMinusOne}
	\frac{j^{\max(p-1,0)}}{2^{j(p-p\theta)}}
.\end{equation*}
Interchanging the order of summation, we see that
\begin{equation*}
\sum_{R\in\mathcal{G}} \abs{u_{\gamma,R}}^p \ell( R)^r
\le	
	C\ell(Q)^r 
	\sum_{i=1}^\infty 
	2^{-i\pdmnMinusOne}
	\sum_{S\in\mathcal{G}_i}
	\abs{u_{\gamma,S} - u_{\gamma,{P(S)}}}^p
	\sum_{j=i}^\infty
	\frac{j^{\max(p-1,0)}}{2^{j(p-p\theta)}}
.\end{equation*}
Let $\varepsilon=(p-p\theta)/2$, so $0<\varepsilon<p/2$.
There is some constant $C=C(p,\theta)$ such that $j^{\max(0,p-1)} < C2^{j\varepsilon}$ for all integers~$j$, and so
\begin{align*}
{\sum_{R\in\mathcal{G}}}\vphantom{\sum} \abs{u_{\gamma,R}}^p \ell( R)^r
&\le	
	C\ell(Q)^r 
	\sum_{i=1}^\infty 
	\sum_{S\in\mathcal{G}_i}
	\abs{u_{\gamma,S} - u_{\gamma,{P(S)}}}^p
	2^{-i(\dmnMinusOne+\varepsilon)}
\\&=
	C
	\sum_{i=1}^\infty 2^{i\varepsilon }
	\sum_{S\in\mathcal{G}_i}
	\abs{u_{\gamma,S} - u_{\gamma,{P(S)}}}^p	
	\ell(S)^r 
.\end{align*}

Again by the Poincar\'e inequality,
\begin{equation*}\abs{u_{\gamma,S} - u_{\gamma,{P(S)}}}
\leq C\ell(S)\fint_{W(S)\cup W(P(S))} \abs{\nabla^{k+1} u}
\end{equation*}
and so 
\begin{align*}
\sum_{R\in\mathcal{G}} \abs{u_{\gamma,R}}^p \ell( R)^r
&\le	
	C
	\sum_{i=1}^\infty 2^{i\varepsilon }
	\sum_{S\in\mathcal{G}_i}
	\ell(S)^{p+r}\biggl(\fint_{W(S)} \abs{\nabla^{k+1} u}\biggr)^p	
.\end{align*}
But $2^{i\varepsilon}\ell(S)^p<2^{ip}\ell(S)^p=\ell(Q)^p$, and so
\begin{equation*}\doublebar{\1_{T(Q)}\nabla^k(u-u_Q)}_{L_{av}^{p,\theta,q}(\Omega)} \leq C\ell(Q) \doublebar{\1_{T(Q)}\nabla^{k+1}(u-u_Q)}_{L_{av}^{p,\theta,q}(\Omega)}.\end{equation*}
By induction, the proof of the bound~\eqref{eqn:poincare} is complete.
(In the case $p=\infty$, the above argument must be modified slightly, by using suprema over the cubes $R\in\mathcal{G}$ rather than sums.)

Now suppose that $\Tr_k^\Omega u=0$ for all $0\leq k\leq m-1$. 
Observe that 
\begin{equation*}\abs[bigg]{\fint_{W(Q)} \nabla^k u} 
\leq 
\abs[bigg]{\fint_{W(Q)} \nabla^k u-{\textstyle{\fint_{T(Q)} \nabla^k u}}} + \abs[bigg]{\fint_{T(Q)} \nabla^k u} 
. \end{equation*}
If $\Trace \nabla^k u=0$ on $\partial\Omega\cap\partial T(Q)$, then we may use some form of the standard Poincar\'e inequality to control each of the terms on the right-hand side; thus,
\begin{equation*}\abs[bigg]{\fint_{W(Q)} \nabla^k u} 
\leq 
C\ell(Q) \fint_{T(Q)} \abs{\nabla^{k+1} u}.\end{equation*}
Applying the Poincar\'e inequality iteratively in $T(Q)$, if $\Trace \nabla^{j} u=0$ for all $k\leq j\leq m-1$, then 
\begin{equation*}\abs[bigg]{\fint_{W(Q)} \nabla^k u} 
\leq 
C\ell(Q)^{m-k} \fint_{T(Q)} \abs{\nabla^m u}.\end{equation*}

Now, recall that $u_Q$ is the polynomial that satisfies $\fint_{W(Q)} \nabla^k u_Q=\fint_{W(Q)} \nabla^k u$ for all $0\leq k\leq m-1$. We may write $u_Q$ as a polynomial in $(x-x_Q)$ for some fixed $x_Q\in W(Q)$. A straightforward induction argument allows us to control the coefficients of $u_Q$ by the averages of $\nabla^k u$, and thereby to show that
\begin{equation*}\sup_{T(Q)} \abs{\nabla^k u_Q} \leq C \sum_{j=0}^{m-1-k} \ell(Q)^j \fint_{W(Q)} \abs{\nabla^{j+k}u}.\end{equation*}
Thus, 
\begin{equation*}\sup_{T(Q)} \abs{\nabla^k u_Q} \leq C\ell(Q)^{m-k} \fint_{T(Q)} \abs{\nabla^m u}\end{equation*}
and by Lemma~\ref{lem:L:L1},
\begin{equation*}\sup_{T(Q)} \abs{\nabla^k u_Q} \leq C \ell(Q)^{m-k-1+\theta-\pdmnMinusOne/p}\doublebar{\1_{T(Q)}\nabla^m u}_{L^{p,\theta,q}_{av}(\Omega)}.\end{equation*}
Because $p-p\theta>0$, we may easily show that 
\begin{equation*}\doublebar{\1_{T(R)} \nabla^k u_Q}_{L_{av}^{p,\theta,q}(\Omega)}\leq C\ell(Q)^{\pdmnMinusOne/p+1-\theta}  \doublebar{\nabla^k u_Q}_{L^\infty(T(R))}\end{equation*}
and so 
\begin{equation*}\doublebar{\1_{T(Q)} \nabla^k u_Q}_{L^{p,\theta,q}_{av}(\Omega)} \leq
 C \ell(Q)^{m-k}\doublebar{\1_{T(Q)}\nabla^m u}_{L^{p,\theta,q}_{av}(\Omega)}
.\end{equation*}
Combining this estimate with the bound~\eqref{eqn:poincare}, we see that $u=(u-u_Q)+u_Q$ must satisfy the bound~\eqref{eqn:poincare:boundary}, as desired.
\end{proof}

We now use this result to establish density of smooth, compactly supported functions in weighted, averaged Sobolev spaces in Lipschitz domains.


\begin{thm}
\label{thm:smooth:dense}
Suppose that $0<\theta<1$, that $1\leq q<\infty$, and that $\Omega$ is a Lipschitz domain.

If $0<p<\infty$, then $\bigl\{\Phi\big\vert_\Omega:\Phi\in C^\infty_0(\R^\dmn)\bigr\}$ is dense in $\dot W_{m,av}^{p,\theta,q}(\Omega)$.

If $p=\infty$ and $u\in\dot W_{m,av}^{\infty,\theta,q}(\Omega)$, then there is some sequence of smooth, compactly supported functions $\varphi_n$, such that $\langle \arr G,\nabla^m\varphi_n\rangle_\Omega\to \langle \arr G,\nabla^m u\rangle_\Omega$ for all $\arr G\in L_{av}^{1,1-\theta,q'}(\Omega)$. 

Furthermore, suppose that
$u\in \dot W_{m,av}^{p,\theta,q}(\Omega)$ with $\Tr_k^\Omega u=0$ for any $0\leq k\leq m-1$, and that $p>\pmin$. If $\Omega$ is bounded or a Lipschitz graph domain, then there is a sequence of functions $\varphi_n$, smooth and compactly supported in $\Omega$, such that $\varphi_n\to u$ as $\dot W_{m,av}^{p,\theta,q}(\Omega)$-functions (if $p<\infty$) or weakly (if $p=\infty$). If $\R^\dmn\setminus\Omega$ is bounded, then there is a sequence of compactly supported functions $\varphi_n\to u$ such that $\nabla^m\varphi_n=0$ in a neighborhood of $\R^\dmn\setminus\Omega$.
\end{thm}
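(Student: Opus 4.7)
The plan is to combine vertical translation of $u$ into the interior, mollification by a smooth bump, and truncation (near $\partial\Omega$ or at infinity), estimating each error using the Whitney-cube form~\eqref{eqn:L:norm:whitney} of the norm and the Poincar\'e bound of Lemma~\ref{lem:Poincare:L}. For the first part I would first reduce to the Lipschitz graph case via a partition of unity subordinate to the covering $\{Z_j\cap\Omega\}$ from Definition~\ref{dfn:domain}; under a bi-Lipschitz change of coordinates straightening each patch, the weighted averaged norm is comparable to the corresponding norm on a graph domain. So it suffices to take $V=\{(x',t):t>\psi(x')\}$. Set $u_\tau(x',t)=u(x',t+\tau)$, which lies in $\dot W_{m,av}^{p,\theta,q}(V_\tau)$ for $V_\tau=\{t>\psi(x')-\tau\}\supset\overline V$. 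Applying a continuity-of-translation argument termwise to the Whitney-cube sum~\eqref{eqn:L:norm:whitney} together with dominated convergence, one obtains $u_\tau\to u$ in $\dot W_{m,av}^{p,\theta,q}(V)$ as $\tau\to 0^+$.

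Next I would mollify $u_\tau$ by a bump $\rho_\delta$ of support smaller than $\dist(\overline V,\partial V_\tau)$, obtaining a function $u_{\tau,\delta}$ smooth on a neighborhood of $\overline V$ with $u_{\tau,\delta}\to u_\tau$ in $\dot W_{m,av}^{p,\theta,q}(V)$ as $\delta\to 0$. Multiplying by a smooth cutoff $\chi_R$ equal to $1$ on $B(0,R)$ and supported in $B(0,2R)$ produces an element of $C^\infty_0(\R^\dmn)$; after first replacing $u$ by a suitable representative (of degree less than~$m$ modulo its polynomial class) the Leibniz errors produced by derivatives of $\chi_R$ on the annulus $R<\abs{x}<2R$ are controlled by a Poincar\'e argument via~\eqref{eqn:poincare}, while $\chi_R\nabla^m u\to\nabla^m u$ in $L_{av}^{p,\theta,q}(V)$ by dominated convergence. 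For the $p=\infty$ case, strong norm convergence is not to be expected, but the same approximants have uniformly bounded $\dot W_{m,av}^{\infty,\theta,q}(V)$-norm and $\nabla^m\varphi_n\to\nabla^m u$ pointwise a.e.; pairing against any $\arr G\in L_{av}^{1,1-\theta,q'}(\Omega)$ and invoking dominated convergence gives the desired weak statement.

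For the third part, I would take a cutoff $\eta_\varepsilon$ with $\eta_\varepsilon=0$ where $\dist(x,\partial\Omega)<\varepsilon$, $\eta_\varepsilon=1$ where $\dist(x,\partial\Omega)>2\varepsilon$, and $\abs{\nabla^k\eta_\varepsilon}\le C\varepsilon^{-k}$; then $\eta_\varepsilon u$ vanishes in a neighborhood of $\partial\Omega$ and can be mollified at scale much smaller than~$\varepsilon$ to yield smooth functions compactly supported in $\Omega$. The main term $\eta_\varepsilon\nabla^m u\to\nabla^m u$ by dominated convergence, and I expect the main obstacle to be controlling the Leibniz remainder $\sum_{k=0}^{m-1}c_k\,\nabla^{m-k}\eta_\varepsilon\cdot\nabla^k u$, which is supported in the thin annulus $A_\varepsilon=\{\varepsilon<\dist(\cdot,\partial\Omega)<2\varepsilon\}$. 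I would decompose $A_\varepsilon$ into tents $T(Q)$ with $\ell(Q)\approx\varepsilon$ and apply the vanishing-trace estimate~\eqref{eqn:poincare:boundary}: the factor $\ell(Q)^{m-k}\approx\varepsilon^{m-k}$ exactly cancels the $\varepsilon^{-(m-k)}$ growth of $\abs{\nabla^{m-k}\eta_\varepsilon}$, yielding a total bound by $C\doublebar{\1_{A_\varepsilon}\nabla^m u}_{L_{av}^{p,\theta,q}(\Omega)}$, which tends to zero as $\varepsilon\to 0$ by absolute continuity of the integral. The outer cutoff at infinity is handled as in the first part. For the exterior case with $\R^\dmn\setminus\Omega$ bounded, the vanishing-trace hypothesis lets one extend $u$ by zero across $\partial\Omega$ to an element of $\dot W^1_{m,loc}(\R^\dmn)$, after which the same mollification plus a cutoff at infinity produces the required $\varphi_n\in C^\infty_0(\R^\dmn)$ with $\nabla^m\varphi_n=0$ in a neighborhood of the compact complement.
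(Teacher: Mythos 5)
Your plan differs structurally from the paper in a way that introduces a genuine error at the very first step of your first-part argument: the claim that $u_\tau(x',t)=u(x',t+\tau)$ converges to $u$ in $\dot W_{m,av}^{p,\theta,q}(V)$ as $\tau\to 0^+$ is false for $q$ small relative to~$p$, which includes the range of the theorem. The obstruction is that the weight in the $L^{p,\theta,q}_{av}$-norm is anchored to the point of evaluation, not to the translated point, so the small Whitney balls near $\partial V$ for $u_\tau$ sample $u$ at depth $\approx\tau$ with a ball of radius only $\approx\dist(x,\partial V)\ll\tau$; this over-weights fine-scale oscillation of $\nabla^m u$ at depth~$\tau$. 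To see the failure concretely, take $V=\R^\dmn_+$, $q=1$, and let $\nabla^m u$ contain a bump $f_j=c_j\1_{Q_j^*}$ of size $r_j$ sitting at depth $\tau_j+r_j$ with $r_j\ll\tau_j\to 0$, the bumps horizontally far apart. A short Whitney-cube computation gives $\|f_j\|_{L^{p,\theta,1}_{av}}^p\approx c_j^p(r_j/\tau_j)^{\dmn p}\,\tau_j^{\dmnMinusOne+p-p\theta}$ while $\|(f_j)_{\tau_j}\|_{L^{p,\theta,1}_{av}}^p\approx c_j^p\,r_j^{\dmnMinusOne+p-p\theta}$, so the ratio is $(r_j/\tau_j)^{\dmnMinusOne+p-p\theta-\dmn p}$, whose exponent is negative exactly when $p>\pdmnMinusOne/(\dmnMinusOne+\theta)$. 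Choosing $c_j^p=r_j^{-(\dmnMinusOne+p-p\theta)}$ and $r_j/\tau_j$ decaying rapidly gives $\sum_j\|f_j\|^p<\infty$ (so $u\in\dot W^{p,\theta,1}_{m,av}$) while $\|u_{\tau_j}-u\|^p\gtrsim\|(f_j)_{\tau_j}\|^p\approx 1$ for all $j$. The paper avoids this entirely: it mollifies $v$ only on $\Omega_\varepsilon=\{\dist(\cdot,\partial\Omega)>2\varepsilon\}$ and then extends the mollified function to a thin boundary layer in a controlled way, with $\abs{\nabla^m v_\varepsilon}$ bounded in each tent $T(Q)$ by a single Whitney average $\fint_{W'(Q)}\abs{\nabla^m v}$; that constant-in-the-tent extension is what lets the near-boundary contribution vanish, precisely what your translate $u_\tau$ cannot guarantee.

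There is a second gap in your treatment of the exterior case $\R^\dmn\setminus\Omega$ bounded. Extending $u$ by zero and then mollifying does \emph{not} produce $\nabla^m\varphi_n=0$ on a neighborhood of $\R^\dmn\setminus\Omega$: the extension $\tilde u$ is zero on $\R^\dmn\setminus\Omega$ but generically has nonzero $\nabla^m u$ just inside $\Omega$, so $(\nabla^m\tilde u)*\rho_\delta$ is nonzero throughout a $\delta$-collar of $\partial\Omega$ on both sides, which is inside any neighborhood of $\R^\dmn\setminus\Omega$. A cutoff at infinity does not repair this, and an interior cutoff $\eta_\varepsilon$ by itself runs into the fact that truncating at infinity in the exterior domain requires renormalizing $u$ by a polynomial $u_R$ (your ``suitable representative'') so that the Leibniz errors on $\{R<\abs{x}<2R\}$ are controlled, after which the renormalized approximant equals $u_R$, not zero, in the boundary layer. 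The paper resolves this with a polynomial-subtraction trick: it forms $v=(u-u_R)\varphi_R+u_R$, mollifies, multiplies by the interior cutoff, and then subtracts the (still polynomial) $u_R*\eta_\varepsilon$; the result is compactly supported and equal to a polynomial of degree at most $m-1$ near $\partial\Omega$, hence has $\nabla^m=0$ there. Your sketch is missing this last compensation, and the statement as you propose to prove it would fail. Your decomposition of the Leibniz remainder in the bounded/graph cases via tents of size $\approx\varepsilon$ and the vanishing-trace Poincar\'e estimate~\eqref{eqn:poincare:boundary} is, by contrast, exactly what the paper does in Step~3 and is correct.
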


\begin{proof}
Let $u\in \dot W_{m,av}^{p,\theta,q}(\Omega)$ for some $0<p\leq \infty$, $1\leq q<\infty$ and $0<\theta<1$. We will produce smooth, compactly supported functions that approximate~$u$. The proof will require several steps.

\textbf{Step 1.} 
First, we show that $u$ may be approximated by functions defined in~$\Omega$ that are nonzero only inside some bounded set.

If $\Omega$ is bounded then $u$ itself is such a function, and so there is nothing to prove.

Suppose that $\partial\Omega$ is compact and $\Omega$ is unbounded. Let $\varphi_R=1$ in $B(0,R)$ and $\varphi_R=0$ outside $B(0,2R)$, with $\abs{\nabla^k\varphi_R}\leq CR^{-k}$ for all $0\leq k\leq m$. We consider only $R$ large enough that $\R^\dmn\setminus\overline\Omega\subset B(0,R/2)$. Let $A$ be the annulus ${B(0,2R)\setminus B(0,R)}$, and let $u_R$ be the polynomial of degree $m-1$ so that $\int_A\nabla^k (u-u_R)=0$ for all $0\leq k\leq m-1$. Then $(u-u_R)\varphi_R$ is zero outside $B(0,2R)$. By the Poincar\'e inequality in~$A$ and the norm \eqref{eqn:norm:L}, $(u-u_R)\varphi_R$ lies in $\dot W_{m,av}^{p,\theta,q}(\Omega)$. Furthermore, $(u-u_R)\varphi_R\to u$ in $\dot W_{m,av}^{p,\theta,q}(\Omega)$ as $R\to\infty$ if $p<\infty$; if $p=\infty$ then $\langle \arr G, \nabla^m((u-u_R)\varphi_R)\rangle_\Omega\to \langle \arr G, \nabla^m u\rangle_\Omega$ whenever $\arr G \in L_{av}^{1,1-\theta,q'}(\Omega)$. Notice however that the lower order derivatives of $(u-u_R)\varphi_R$ need not approach the derivatives of~$u$; in particular, if $\Tr_k^\Omega u=0$, then $\Tr_k^\Omega ((u-u_R)\varphi_R)=\Tr_k^\Omega u_R$, not zero.

\begin{figure}
\begin{center}
\begin{tikzpicture}[xscale=1.2, yscale = 0.6]

\def\RI{{(0,0)},{(0.5,0)},{(1,0)}}
\def\RII{{(3,0)},{(3.5,0.3)},{(4,0)}}

\foreach \point [count=\n] in \RI {
        \node[coordinate] (dI-\n) at \point {};
        }
\pgfmathtruncatemacro{\sizeI}{\n}

\foreach \point [count=\n] in \RII {
        \node[coordinate] (dII-\n) at \point {};
        }
\pgfmathtruncatemacro{\sizeII}{\n}

%
%
%
%


\draw (-1,0)  
	\foreach \i in {1,...,\sizeI} {-- (dI-\i)}
	\foreach \i in {1,...,\sizeII} {-- (dII-\i)} 
	-- (5,0) node [right] {$\partial\Omega$};

\draw [fill=white!80!black] 
	(dI-1) \foreach \i in {2,...,\sizeI} {-- (dI-\i)} --  +(0,2)--(0,2)--cycle; 
\node at (1/2,1) {$T(R_1)$};

\draw [fill=white!80!black] (3,0) rectangle (4,2); \node at (3+1/2,1) {$T(R_2)$};

\draw [fill=white!80!black] (0,2) rectangle (1,4); \node at (1/2,3) {$\widetilde W(R_1)$};
\draw [fill=white!80!black] (3,2) rectangle (4,4); \node at (3+1/2,3) {$\widetilde W(R_2)$};

\draw [fill=white!80!black] (0,4) rectangle (4,8); \node at (2,6) {$W(Q)$};

\begin{scope}[shift={(0,-1/2)}]
\draw [ultra thick] (0,0) -- node [below] {$R_1$} (1,0);
\draw [ultra thick] (3,0) -- node [below] {$R_2$} (4,0);
\end{scope}

\begin{scope}[shift={(0,-3/2)}]
\draw [ultra thick] (0,0) -- node [below] {$Q$} (4,0);
\end{scope}

\end{tikzpicture}
\caption{The region $A(Q)$ as a union of the regions $W(Q)$, $T(R)$ and $\widetilde W(R)$. }
\label{fig:smooth:dense}
\end{center}
\end{figure}
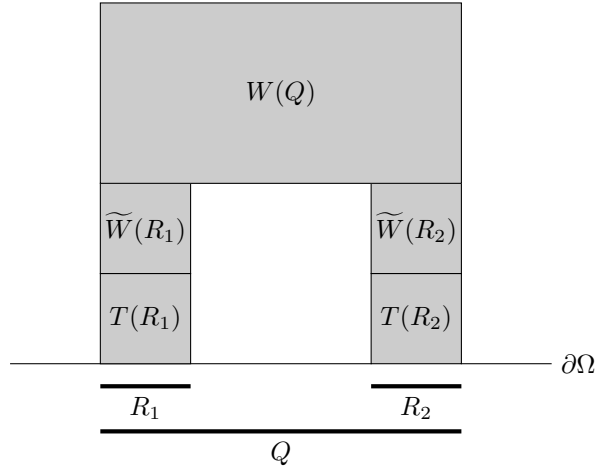

If $\Omega$ is a Lipschitz graph domain, let $Q\subset\R^\dmnMinusOne$ be a cube and adopt the notation of Lemma~\ref{lem:Poincare:L}. In particular, recall the regions $T(Q)$ and $W(Q)$ and the polynomial~$u_Q$.  Let $\varphi_Q$ be supported in $T(Q)$ and identically equal to $1$ in $T((1/2)Q)$, where $(1/2)Q$ is the cube (in $\R^\dmnMinusOne$) concentric to $Q$ with half the side-length. Let $A(Q)={T(Q)\setminus T((1/2)Q)}$. Notice that 
\begin{equation*}A(Q)=W(Q)\cup \bigcup_{R} T(R)\cup \widetilde W(R)\end{equation*}
where the union is over the $4^{\dmnMinusOne}-2^\dmnMinusOne$ dyadic subcubes $R\subset Q\setminus (1/2)Q$ with $\ell(R)=\ell(Q)/4$, and where $\widetilde W(R)$ is a region congruent to $T(R)$ and translated upwards. See Figure~\ref{fig:smooth:dense}.

We now bound the lower-order derivatives of $u-u_Q$ in $A(Q)$; this will allow us to control $\nabla^m (\varphi_Q (u-u_Q))-\nabla^m u$. We consider the regions $W(Q)$, $T(R)$ and $\widetilde W(R)$ separately
By the Poincar\'e inequality in $W(Q)$,
\begin{equation*}\doublebar{\nabla^k (u-u_Q)}_{L^q(W(Q))}\leq C \ell(Q)\doublebar{\nabla^{k+1} (u-u_Q)}_{L^q(W(Q))}\end{equation*}
for any integer $k$ such that $\nabla^{k+1}u\in {L^q(W(Q))}$,
and so by induction, if $0\leq k\leq m$ then
\begin{equation*}\doublebar{\nabla^k (u-u_Q)}_{L^q(W(Q))}\leq C \ell(Q)^{m-k}\doublebar{\nabla^m u}_{L^q(W(Q))}.\end{equation*}

Recall that $W(Q)\subset T(Q)$, and so by formula~\eqref{eqn:dyadic:norm:local} we may express
\begin{equation*}\doublebar{\1_{W(Q)}\nabla^k(u-u_Q)}_{L^{p,\theta,q}_{av}(\Omega)}\end{equation*}
in terms of integrals over $W(R)$ for cubes $R\in \mathcal{G}$. But if $R\in\mathcal{G}$ and $R\neq Q$, then $W(Q)$ and $W(R)$ are disjoint, and so the only nonzero term on the right-hand side of formula~\eqref{eqn:dyadic:norm:local} is the one involving an integral over~$W(Q)$. Thus,
\begin{align*}
\doublebar{\1_{W(Q)}\nabla^k(u-u_Q)}_{L^{p,\theta,q}_{av}(\Omega)}
&\approx \doublebar{\nabla^k (u-u_Q)}_{L^q(W(Q))}\ell(Q)^{-\pdmn/q+\dmnMinusOne+p-p\theta}
.\end{align*}
By the previous inequality
\begin{align*}
\doublebar{\1_{W(Q)}\nabla^k(u-u_Q)}_{L^{p,\theta,q}_{av}(\Omega)}
&\leq 
C\ell(Q)^{m-k}\doublebar{\nabla^m u}_{L^q(W(Q))}
\ell(Q)^{-\pdmn/q+\dmnMinusOne+p-p\theta}
\end{align*}
and a final application of formula~\eqref{eqn:dyadic:norm:local}  yields that
\begin{align*}
\doublebar{\1_{W(Q)}\nabla^k(u-u_Q)}_{L^{p,\theta,q}_{av}(\Omega)}
&\leq 
C\ell(Q)^{m-k}\doublebar{\1_{W(Q)}\nabla^m u}_{L^{p,\theta,q}_{av}(\Omega)}
\\&\leq 
C\ell(Q)^{m-k}\doublebar{\1_{A(Q)}\nabla^m u}_{L^{p,\theta,q}_{av}(\Omega)}
.\end{align*}

Let $R$ be one of the dyadic subcubes mentioned above, and let  $V=W(Q)\cup \widetilde W(R)$. Let $0\leq k\leq m-1$ and let $w=u-u_Q$. Then by elementary arguments and the Poincar\'e inequality in~$V$,
\begin{align*}
\abs[bigg]{\fint_{V} \nabla^k w}
&=
\abs[bigg]{\fint_{W(Q)} \nabla^k w-\fint_{V} \nabla^k w}
=
\abs[bigg]{\fint_{W(Q)} \bigl(\nabla^k w-{\textstyle\fint_V}\nabla^k w\bigr)} 
\\&\leq
\fint_{W(Q)} \abs{\nabla^k w-{\textstyle\fint_V}\nabla^k w}
\leq
\frac{\abs{V}}{\abs{W(Q)}}\fint_{V} \abs{\nabla^k w-{\textstyle\fint_V}\nabla^k w}
\\&\leq
C\fint_{V} \abs{\nabla^k w-{\textstyle\fint_V}\nabla^k w}
\leq
C\ell(Q)\fint_{V} \abs{\nabla^{k+1} w}
.\end{align*}
Now, $\doublebar{\nabla^k w}_{L^q(\widetilde W(R))}
\leq
\doublebar{\nabla^k w}_{L^q(V)}
$, and by the Poincar\'e inequality and H\"older's inequality,
\begin{align*}
\doublebar{\nabla^k w}_{L^q(V)}
&\leq
\doublebar{\nabla^k w-{\textstyle\fint_V}\nabla^k w}_{L^q(V)}
+\abs{V}^{1/q}\abs{\textstyle\fint_V\nabla^k w}
\\&\leq
C\ell(Q)\doublebar{\nabla^{k+1} w}_{L^q(V)}
+\abs{V}^{1/q}C\ell(Q)\fint_{V} \abs{\nabla^{k+1} w}
\\&\leq
C\ell(Q)\doublebar{\nabla^{k+1} w}_{L^q(V)}
.\end{align*}
By induction, and recalling the definitions of~$w$ and~$V$, if $0\leq k\leq m-1$ then
\begin{align*}
\doublebar{\nabla^k (u-u_Q)}_{L^q(\widetilde W(R)\cup W(Q))}
&\leq
C\ell(Q)^{m-k}\doublebar{\nabla^m (u-u_Q)}_{L^q(\widetilde W(R)\cup W(Q))}
.\end{align*}
Let $P(R)$ be the dyadic parent of~$R$. Then $P(R)\in \mathcal{G}$ and $\widetilde W(R)\subset W(P(R))$.
By formula~\eqref{eqn:dyadic:norm:local}, if $r=-\pdmn/q+\dmnMinusOne+p-p\theta$, then
\begin{align*}
\doublebar{\1_{\widetilde W(R)}\nabla^k(u-u_Q)}_{L^{p,\theta,q}_{av}(\Omega)}
&\approx
\ell(P(R))^r\doublebar{\1_{\widetilde W(R)}\nabla^k (u-u_Q)}_{L^q( W(P(R)))}
\\&=2^r\ell(R)^r\doublebar{\nabla^k (u-u_Q)}_{L^q( {\widetilde W(R)} )}
.
\end{align*}
Applying the previous inequality and the fact that $\ell(Q)=2\ell(P(R))=4\ell(R)$, we have that
\begin{align*}
\doublebar{\1_{\widetilde W(R)}\nabla^k(u-u_Q)}_{L^{p,\theta,q}_{av}(\Omega)}
&\leq
C\ell(Q)^{m-k+r}\doublebar{\nabla^m (u-u_Q)}_{L^q(\widetilde W(R)\cup W(Q))}
\\&\leq
C\ell(P(R))^{m-k+r}\doublebar{\1_{A(Q)}\nabla^m (u-u_Q)}_{L^q( W(P(R)))}
\\&\qquad+C\ell(Q)^{m-k+r}\doublebar{\1_{A(Q)}\nabla^m (u-u_Q)}_{L^q( W(Q))}
\end{align*}
and a final application of formula~\eqref{eqn:dyadic:norm:local} yields that
\begin{align*}
\doublebar{\1_{\widetilde W(R)}\nabla^k(u-u_Q)}_{L^{p,\theta,q}_{av}(\Omega)}
&\leq
C\ell(Q)^{m-k}\doublebar{\1_{A(Q)}\nabla^m u}_{L^{p,\theta,q}_{av}(\Omega)}
.\end{align*}

Finally, by Lemma~\ref{lem:Poincare:L},
\begin{equation*}\doublebar{\1_{T(R)}\nabla^k(u-u_R)}_{L^{p,\theta,q}_{av}(\Omega)}
\leq C\ell(Q)^{m-k}\doublebar{\1_{T(R)}\nabla^m u}_{L^{p,\theta,q}_{av}(\Omega)}.\end{equation*}
We thus must bound $u_Q-u_R$.  Let $\widetilde V=W(Q)\cup \widetilde W(R)\cup W(R)$. Arguing as before, we have that
\begin{equation*}
\doublebar{\nabla^k w}_{L^q(\widetilde V)} 
=
\doublebar{\nabla^k (u-u_Q)}_{L^q(\widetilde V)} \leq C\ell(Q)^{m-k} \doublebar{\nabla^m u}_{L^q(\widetilde V)} 
\end{equation*}
and similarly
\begin{equation*}\doublebar{\nabla^k (u-u_R)}_{L^q(\widetilde V)} \leq C\ell(Q)^{m-k} \doublebar{\nabla^m u}_{L^q(\widetilde V)} 
.\end{equation*}
By definition of~$\widetilde V$, and letting $P(R)$ be the dyadic parent of $R$ as before, we have that
\begin{equation*}\doublebar{\nabla^m u}_{L^q(\widetilde V)} 
\leq \doublebar{\nabla^m u}_{L^q(W(Q))}+\doublebar{\1_{A(Q)}\nabla^m u}_{L^q(W(P(R)))}+\doublebar{\nabla^m u}_{L^q(W(R))}
.\end{equation*}
As usual, by formula~\eqref{eqn:dyadic:norm:local} and because $\ell(Q)=2\ell(P(R))=4\ell(R)$, we have that 
\begin{multline*}\doublebar{\nabla^m u}_{L^q(W(Q))}+\doublebar{\1_{A(Q)}\nabla^m u}_{L^q(W(P(R)))}+\doublebar{\nabla^m u}_{L^q(W(R))}
\\\leq
C\ell(Q)^{\pdmn/q-\pdmnMinusOne-p+p\theta}\doublebar{\1_{A(Q)}\nabla^m u}_{L^{p,\theta,q}_{av}(\Omega)}.\end{multline*}
Thus, if $0\leq k\leq m-1$, then
\begin{align*}\doublebar{\nabla^k (u_R-u_Q)}_{L^q(\widetilde V)} 
&\leq 
C\ell(Q)^{m-k+\pdmn/q-\pdmnMinusOne/p-1+\theta}\doublebar{\1_{A(Q)}\nabla^m u}_{L^{p,\theta,q}_{av}(\Omega)}.\end{align*}
But observe that $u_Q$ and $u_R$ are polynomials of degree at most $m-1$. Thus, as in the proof of formula~\eqref{eqn:poincare:boundary}, we may bound the coefficients of $u_Q-u_R$, and so we have a pointwise inequality
\begin{equation*}\doublebar{\nabla^k (u_R-u_Q)}_{L^\infty(T(Q))} 
\leq 
C\ell(Q)^{m-k} \ell(Q)^{-\pdmnMinusOne/p-1+\theta}\doublebar{\1_{A(Q)}\nabla^m u}_{L^{p,\theta,q}_{av}(\Omega)}.\end{equation*}
Again as in the proof of formula~\eqref{eqn:poincare:boundary}, this yields the bound
\begin{equation*}\doublebar{\1_{T(R)} \nabla^k (u_R-u_Q)}_{L_{av}^{p,\theta,q}(\Omega)}
\leq   C\ell(Q)^{m-k} \doublebar{\1_{A(Q)}\nabla^m u}_{L^{p,\theta,q}_{av}(\Omega)}
.\end{equation*}

Combining these estimates, we see that if $0\leq k<m$, then 
\begin{equation*}\doublebar{\1_{A(Q)} (\nabla^k u-\nabla^k u_Q)}_{L_{av}^{p,\theta,q}(\Omega)}\leq C\ell(Q)^{m-k} \doublebar{\1_{A(Q)} \nabla^m u}_{L_{av}^{p,\theta,q}(\Omega)}.\end{equation*}
It is now straightforward to establish that $\varphi_Q (u-u_Q)\to u$ in $\dot W_{m,av}^{p,\theta,q}(\Omega)$ as $Q$ expands to all of $\R^\dmnMinusOne$.

Notice that if $\Tr_{k}^\Omega u=0$ for all $0\leq k\leq m-1$, then we have that $\varphi_Q u\to u$ as $Q$ expands to all of $\R^\dmnMinusOne$, and so in this case we need not renormalize~$u$.

\textbf{Step 2.} We now show that smooth functions are dense. 

Let $v\in \dot W_{m,av}^{p,\theta,q}(\Omega)$ be an approximant to $u$ as produced in Step~1, i.e., let $v$ be zero outside of a bounded set. Let $v_\varepsilon=v*\eta_\varepsilon$, where $\eta_\varepsilon=\varepsilon^{-\pdmn}\eta(x/\varepsilon)$ and where $\eta$ is smooth, nonnegative, supported in $B(0,1)$, and satisfies $\int\eta=1$. Observe that $v_\varepsilon$ is smooth in $\Omega_\varepsilon$, where
\begin{equation*}\Omega_\varepsilon= \bigl\{x\in\Omega:\dist(x,\partial\Omega)>2\varepsilon\bigr\}.\end{equation*}
Because $\{\eta_\varepsilon\}_{\varepsilon>0}$ is a smooth approximate identity, we have that for any fixed~$\delta$, $\1_{\Omega_\delta}\nabla^m v_\varepsilon\to \1_{\Omega_\delta}\nabla^m v$ as $\varepsilon\to 0^+$ in $L_{av}^{p,\theta,q}(\Omega)$, either weakly or strongly. Furthermore,  if $\varepsilon\ll\delta$, then $\1_{\Omega_\varepsilon\setminus\Omega_\delta}\nabla^m v_\varepsilon$ is controlled by $\1_{\Omega\setminus\Omega_{2\delta}}\nabla^m v$, and this second quantity approaches zero in $L_{av}^{p,\theta,q}(\Omega)$ as $\delta\to 0^+$, weakly or strongly; thus, we have that $\1_{\Omega_\varepsilon}\nabla^m v_\varepsilon\to \nabla^m v$ as $\varepsilon\to 0^+$ in $L_{av}^{p,\theta,q}(\Omega)$.

Now, we must extend $v_\varepsilon$ from $\Omega_\varepsilon$ to all of~$\Omega$. 
For ease of visualization, suppose that $\Omega$ is a Lipschitz graph domain, and let $\mathcal{G}$ be a grid of cubes $Q\subset\R^\dmnMinusOne$ of side-length~$C\varepsilon$.  For each such~$Q$, observe that in $W(Q)$, we have that $\abs{\nabla^m v_\varepsilon}\leq C \fint_{ W'(Q)}
\abs{\nabla^m v}$, where $W'(Q)$ is a slightly enlarged version of~$W(Q)$. We may extend $v_\varepsilon$ to a smooth function in such a way that $\abs{\nabla^m v_\varepsilon} \leq C \fint_{ W'(Q)}\abs{\nabla^m v}$ in all of $T(Q)$. Then
\begin{equation*}\int_{T(Q)} \biggl(\fint_{B(x,\Omega)} \abs{\nabla^m v_\varepsilon}^q\biggr)^{p/q} \dist(x,\partial\Omega)^{p-1-p\theta}\,dx
\leq C \biggl(\fint_{\widetilde W(Q)} \abs{\nabla^m v} \biggr)^p \varepsilon^{\dmn+p-1-p\theta}
\end{equation*}
where $B(x,\Omega)=B(x,\dist(x,\partial\Omega)/2)$, as in the definition of $L_{av}^{p,\theta,q}(\Omega)$.
We may sum to see that
\begin{equation*}\doublebar{\1_{\Omega\setminus\Omega_\varepsilon} \nabla^m v_\varepsilon }_{L_{av}^{p,\theta,q}(\Omega)}\leq \doublebar{\1_{\Psi_\varepsilon}\nabla^m v }_{L_{av}^{p,\theta,q}(\Omega)}\end{equation*}
where $\Psi_\varepsilon$ is a small region near the boundary, which shrinks away as $\varepsilon\to 0^+$.
A similar argument is valid in Lipschitz domains with compact boundary. Thus we may extend $v_\varepsilon$ to a smooth function in such a way that $v_\varepsilon\to 0$ in  $L_{av}^{p,\theta,q}(\Omega)$, either weakly or strongly, as $\varepsilon\to 0$.

\textbf{Step 3.} We now prove the second part of the theorem, that is, the special results in the case where $\Trace \partial^\gamma u=0$ on $\partial\Omega$ for all $\gamma\leq m-1$. 

If $\Omega$ is bounded let $v=u$. If $\Omega$ is a Lipschitz graph domain let $v=u\varphi_Q$ for some large cube~$Q$. In both cases $v$ is compactly supported. If $\R^\dmn\setminus \Omega$ is bounded, let $v=(u-u_R)\varphi_R + u_R$, where $R\gg 0$ and where $u_R$ is the polynomial of degree $m-1$ introduced in Step~1. Notice that $v$ is \emph{not} compactly supported but that $v$ equals a polynomial outside of some large ball.

Let $v_\varepsilon = v*\eta_\varepsilon$ as before. Notice that $\nabla^m (u_R*\eta_\varepsilon) = (\nabla^m u_R)*\eta_\varepsilon =0$, and so if $\R^\dmn\setminus \Omega$ is bounded then $v_\varepsilon$ is equal to a polynomial of degree $m-1$ outside of some ball. 
Let $\varphi_\varepsilon$ be smooth, supported in $\Omega_{K\varepsilon}$ and identically equal to $1$ in $\Omega_{2K\varepsilon}$, with $\abs{\nabla^k\varphi_\varepsilon}\leq C \varepsilon^{-k}$ for all $1\leq k\leq m$, where $K$ is a large constant depending on the Lipschitz character of~$\Omega$.

We wish to show that $v_\varepsilon\,\varphi_\varepsilon \to v$.

Recall that $\1_{\Omega_{\varepsilon}}\nabla^m v_\varepsilon\to \nabla^m v$, and so we need only bound $\1_{\Omega_{\varepsilon}}\nabla^m v_\varepsilon - \nabla^m (v_\varepsilon\,\varphi_\varepsilon)$.
Arguing as above, we may see that $\1_{\Omega_{\varepsilon}}\nabla^m v_\varepsilon - \nabla^m v_\varepsilon\,\varphi_\varepsilon\to 0$ in $L^{p,\theta,q}_{m,av}(\Omega)$ or weakly as $\varepsilon\to 0$, and so we need only bound terms of the form $\nabla^k v_\varepsilon \nabla^{m-k}\varphi_\varepsilon$ for $m-k\geq 1$.

If $\Omega$ is a Lipschitz graph domain then by formula~\eqref{eqn:dyadic:norm:local}
\begin{equation*}\doublebar{\nabla^k v_\varepsilon \nabla^{m-k}\varphi_\varepsilon}_{L_{av}^{p,\theta,q}(\Omega)}^p
\approx
	\sum_{Q\in\mathcal{G}} \biggl(\fint_{W(Q)} \abs{\nabla^k v_\varepsilon \nabla^{m-k}\varphi_\varepsilon}^q\biggr)^{p/q} \ell(Q)^{\dmnMinusOne+p-p\theta}
\end{equation*}
where $\mathcal{G}$ is a grid of dyadic cubes in $\R^\dmnMinusOne$.
But $\nabla^{m-k}\varphi_\varepsilon$ is supported only in $\Omega_{K\varepsilon}\setminus \Omega_{2K\varepsilon}$, so 
\begin{equation*}\doublebar{\nabla^k v_\varepsilon \nabla^{m-k}\varphi_\varepsilon}_{L_{av}^{p,\theta,q}(\Omega)}^p
 \approx
	\sum_{\substack{Q\in\mathcal{G}\\ \!\!\!\!\!\!\! (K/C)\varepsilon\leq\ell(Q)\leq CK\varepsilon \!\!\!\!\!\!\!}} \biggl(\fint_{W(Q)} \abs{\nabla^k v_\varepsilon \nabla^{m-k}\varphi_\varepsilon}^q\biggr)^{p/q} \ell(Q)^{\dmnMinusOne+p-p\theta}
.\end{equation*}
Using our bounds on $\varphi_\varepsilon$, we see that
\begin{equation*}\doublebar{\nabla^k v_\varepsilon \nabla^{m-k}\varphi_\varepsilon}_{L_{av}^{p,\theta,q}(\Omega)}^p
 \leq C
	\sum_{\substack{Q\in\mathcal{G}\\ \!\!\!\!\!\!\!\! (K/C)\varepsilon\leq\ell(Q)\leq CK\varepsilon \!\!\!\!\!\!\!\!}} \biggl(\fint_{W(Q)} \abs{\nabla^k v_\varepsilon }^q\biggr)^{p/q} \ell(Q)^{\dmnMinusOne+p-p\theta-pm+pk}
.\end{equation*}
If $K$ is large enough, then as before we may control $\nabla^k v_\varepsilon$ in $W(Q)$ by $\nabla^k v$ in $W'(Q)$, and because $\Tr_k^\Omega v=0$ for all $0\leq k\leq m-1$, we may control $\nabla^k v$ in $W'(Q)$ using Lemma~\ref{lem:Poincare:L}; thus
\begin{equation*}\doublebar{\nabla^k v_\varepsilon \nabla^{m-k}\varphi_\varepsilon}_{L_{av}^{p,\theta,q}(\Omega)}^p
 \leq C
	\sum_{\substack{Q\in\mathcal{G}\\ \!\!\!\!\!\!\! (K/C)\varepsilon\leq\ell(Q)\leq CK\varepsilon \!\!\!\!\!\!\!}} \doublebar{\1_{T(CQ)} \nabla^m v}_{L_{av}^{p,\theta,q}(\Omega)}^p
.\end{equation*}
If $p<\infty$ then the right-hand side approaches zero as $\varepsilon\to 0$, and if $p=\infty$ it is bounded for all~$\varepsilon$ (after replacing sums with appropriate suprema). Thus, $v_\varepsilon\,\varphi_\varepsilon \to v$ in $\dot W_{m,av}^{p,\theta,q}(\Omega)$, weakly or strongly, as desired. If $\partial\Omega$ is compact, notice that $\1_{\Omega_{\varepsilon}}\nabla^m v_\varepsilon - \nabla^m (v_\varepsilon\,\varphi_\varepsilon)=0$ except for a small region near the boundary; working in Lipschitz cylinders and Lipschitz graph domains, as in Definition~\ref{dfn:domain}, we may show that $v_\varepsilon\varphi_\varepsilon \to v$, as desired.
(If $\R^\dmn\setminus\Omega$ is bounded then $v_\varepsilon \varphi_\varepsilon$ is not compactly supported; however, $v_\varepsilon \varphi_\varepsilon= v_\varepsilon \varphi_\varepsilon - u_R*\eta_\varepsilon$ as $\dot W_{m,av}^{p,\theta,q}$-functions, and $v_\varepsilon \varphi_\varepsilon - u_R*\eta_\varepsilon$ is compactly supported and equal to a polynomial in a neighborhood of $\partial\Omega$, as desired.)
\end{proof}

\section{Extensions: Dirichlet boundary data}
\label{sec:extension:dirichlet}

In this section we will prove the following extension theorem; this will show, in effect, that $\dot W\!A^p_\theta(\partial\Omega)\subseteq \{\Tr_{m-1}^\Omega u: u\in\dot W_{m,av}^{p,\theta,q}(\Omega)\}$. In Section~\ref{sec:trace:dirichlet} we will prove the opposite inclusion, showing that these two spaces are equal.

\begin{thm} 
\label{thm:extension}
Suppose that $0<\theta<1$ and that $\pdmnMinusOne/(\dmnMinusOne+\theta) < p\leq\infty$.
Let $\Omega$ be a Lipschitz domain with connected boundary.

Suppose that $\arr \varphi\in \dot W\!A^p_{\theta}(\partial\Omega)$. Then there is some $\Phi\in\dot W_{m,av}^{p,\theta,\infty}(\Omega)$ such that 
$\arr\varphi = \Tr_{m-1}^\Omega \Phi$ and such that
\begin{equation*}\doublebar{\Phi}_{\dot W_{m,av}^{p,\theta,\infty}(\Omega)}\leq C \doublebar{\arr \varphi}_{\dot W\!A^p_{\theta}(\partial\Omega)}.\end{equation*}
In the case $p=1$ this is true whether we use atoms or the norm~\eqref{eqn:Slobodekij} to characterize~$\dot B^{1,1}_\theta(\partial\Omega)$; that is, if $\arr\varphi$ lies in the set in formula~\eqref{eqn:Whitney:dense} then there is an extension $\Phi$ such that both of the bounds
\begin{align*}\doublebar{\Phi}_{\dot W^{1,\theta,q}_{m,av}(\Omega)}
&\leq C \int_{\partial\Omega} \int_{\partial\Omega} \frac{\abs{ \arr\varphi(x)-\arr\varphi(y)}}{\abs{x-y}^{\dmnMinusOne+\theta}}\,d\sigma(x)\,d\sigma(y)
,\\
\doublebar{\Phi}_{\dot W^{1,\theta,q}_{m,av}(\Omega)}
&\leq C \inf\Bigl\{\sum_j\abs{\lambda_j}:\arr \varphi= \arr c_0+\sum_j \lambda_j\, \arr a_j,\> \arr c_0\text{ constant},\>  \arr a_j\text{  atoms}\Bigr\}\end{align*}
are valid.
\end{thm}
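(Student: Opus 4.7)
The plan is to construct a Whitney-type extension adapted to the order $m-1$ of the boundary data. Fix a Whitney decomposition $\mathcal{W}$ of $\Omega$ and a smooth partition of unity $\{\eta_Q\}_{Q \in \mathcal{W}}$ with $\supp \eta_Q \subset 2Q$ and $\abs{\nabla^k \eta_Q} \leq C \ell(Q)^{-k}$. For each $Q \in \mathcal{W}$ select a boundary ball $Q^\ast \subset \partial\Omega$ with $\ell(Q^\ast) \approx \dist(Q^\ast, Q) \approx \ell(Q)$, and let $P_Q$ be a polynomial of degree at most $m-1$ whose top-order derivatives $\partial^\gamma P_Q$ (for $\abs{\gamma} = m-1$) coincide with the averages $\fint_{Q^\ast} \varphi_\gamma \, d\sigma$, and whose lower-order coefficients are fixed by a telescoping integration along a distinguished chain of cubes from a reference base cube. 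The extension is $\Phi = \sum_{Q \in \mathcal{W}} \eta_Q P_Q$; the trace identity $\Tr^\Omega_{m-1}\Phi = \arr\varphi$ follows from standard Whitney-trace arguments together with density of the set in \eqref{eqn:Whitney:dense}.

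For the norm estimate I would use the identity $\sum_{Q'} \eta_{Q'} \equiv 1$ to rewrite, on each $Q \in \mathcal{W}$,
\begin{equation*}
\nabla^m \Phi = \sum_{Q' \sim Q} \sum_{k=1}^{m} \binom{m}{k}\, \nabla^k \eta_{Q'} \otimes \nabla^{m-k}(P_{Q'} - P_Q),
\end{equation*}
so that $\sup_Q \abs{\nabla^m \Phi}$ is controlled by a finite sum of terms of the form $\ell(Q)^{-k} \sup_{2Q} \abs{\nabla^{m-k}(P_{Q'} - P_Q)}$ over adjacent cubes. The essential technical input is the bound \eqref{eqn:extension:bound}, which controls each polynomial-coefficient difference using only the homogeneous Besov oscillation of $\arr\varphi$ across $Q^\ast$ and $(Q')^\ast$; this is where the present argument departs from the inhomogeneous bound \eqref{eqn:extension:bound:2} of \cite{MazMS10}, which appeals to lower-order Besov norms not at our disposal. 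Summing the local estimates against the weight $\ell(Q)^{\dmnMinusOne + p - p\theta}$ and invoking the equivalent norm \eqref{eqn:L:norm:whitney}, I would recognize the resulting double sum, after a change of variables from pairs of cubes to pairs of boundary points, as the Slobodekij integral \eqref{eqn:Slobodekij} in the range $1 < p < \infty$, and as the H\"older quotient when $p = \infty$.

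For $\pmin < p \leq 1$ the approach becomes atomic. Decompose $\arr\varphi = \arr c_0 + \sum_j \lambda_j \arr a_j$ with each $\arr a_j$ a $\dot B^{p,p}_\theta(\partial\Omega)$-atom supported in some $B(x_j, r_j) \cap \partial\Omega$, handle the constant $\arr c_0$ by any fixed polynomial extension, apply the above Whitney construction to each $\arr a_j$ to obtain $\Phi_j$ essentially localized to a tent over $B(x_j, r_j)$, and verify a uniform estimate $\doublebar{\Phi_j}_{\dot W^{p,\theta,\infty}_{m,av}(\Omega)} \leq C$ from the atom's size conditions. The $p$-subadditivity
\begin{equation*}
\doublebar{\Phi}_{\dot W^{p,\theta,\infty}_{m,av}(\Omega)}^{p} \leq \sum_j \abs{\lambda_j}^{p}\, \doublebar{\Phi_j}_{\dot W^{p,\theta,\infty}_{m,av}(\Omega)}^{p}
\end{equation*}
then closes the argument. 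The refinement stated for $p = 1$ follows at once, since the construction is linear and produces the same extension $\Phi$ from a given $\arr\varphi$ regardless of which characterization of $\dot B^{1,1}_\theta(\partial\Omega)$ is employed.

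The main obstacle is precisely the homogeneous polynomial-difference estimate \eqref{eqn:extension:bound}: when only top-order data is given, controlling differences of lower-order coefficients of $P_Q - P_{Q'}$ requires telescoping along chains of adjacent cubes and absorbing a logarithmic-in-scale loss into the dyadic weight, an effort that the inhomogeneous setting avoids entirely. A secondary obstacle, in the atomic regime, is to extract sufficient off-diagonal decay of $\Phi_j$ from the atom's size bound $\doublebar{\nabla \arr a_j}_{L^\infty(\partial\Omega)} \leq r_j^{\theta - 1 - \pdmnMinusOne/p}$ so that contributions from Whitney cubes far from $B(x_j, r_j)$ remain summable in $L^{p,\theta,\infty}_{av}(\Omega)$; this is the step where the restriction $p > \pmin$ genuinely enters, through Lemma~\ref{lem:L:L1}.
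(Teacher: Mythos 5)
Your high-level strategy---a Whitney-type extension built from local Taylor polynomials, reduction to atoms for $p\leq 1$, and $p$-subadditivity of the quasi-norm---is the same as the paper's, but your realization differs: you use a discrete Whitney decomposition with a partition of unity and polynomials $P_Q$ built directly from averages of the boundary array, whereas the paper convolves the Taylor polynomial of an \emph{existing extension} $\varphi$ (available on the dense set \eqref{eqn:Whitney:dense}) against a smooth kernel $K(x,y)$ on $\Omega\times\partial\Omega$. The smooth-kernel version has the advantage that the moment cancellations $\int_{\partial\Omega}\partial_x^\alpha K(x,y)\,d\sigma(y)=0$ for $\alpha\neq 0$ are automatic, which is what lets the top-order coefficient of the extension reduce cleanly to a boundary average. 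Your discrete route is in principle viable (it is how Jonsson--Wallin proceed), but it is not a smaller amount of work.

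The central issue is that you have not actually proven the estimate you rely on, and your citation of it is misplaced. You invoke \eqref{eqn:extension:bound} as ``controlling each polynomial-coefficient difference using only the homogeneous Besov oscillation,'' but \eqref{eqn:extension:bound} is the \emph{conclusion} of the paper's main computation: a pointwise bound on $\abs{\nabla^m\widetilde{\mathcal{E}}\arr\varphi(x)}$ in terms of a double integral involving only the top-order array $\arr\varphi$. It is not a polynomial-difference lemma, and it does not drop out of \eqref{eqn:L:norm:whitney}. The paper derives it from \eqref{eqn:extension:bound:2} by an induction in the degree $\abs{\zeta}$, based on an elementary inequality of the form $\int_{\widetilde\Delta(z',r)}\abs{\theta(y')}^q\abs{y'-z'}^{1-d}\,dy'\leq q^q r^q \int_{\widetilde\Delta(z',r)}\abs{\nabla\theta(y')}^q\abs{y'-z'}^{1-d}\,dy'$, iterated until only $\abs{\zeta}=m-1$ terms remain. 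This iteration is the heart of the homogeneous result; in the discrete framework the analogous step is exactly your ``telescoping along a distinguished chain,'' which you flag as ``the main obstacle'' and do not carry out. As it stands, your proposal defines $P_Q$ only up to the choice of chain and never establishes that adjacent differences $P_{Q'}-P_Q$ are controlled by $\arr\varphi$ alone; until that is done the norm estimate does not close. Likewise, for $\pmin<p\leq1$ you note but do not supply the off-diagonal estimate with the logarithmic loss $\bigl(\ln(\dist(x,\partial\Omega)/r_j)\bigr)^p$ that the paper integrates against $\abs{x-x_j}^{p-1-p\theta-pd}$; that computation is precisely where $p>\pmin$ is needed, and waving at Lemma~\ref{lem:L:L1} does not replace it. In short: the architecture is right, but the two steps you label as obstacles are the proof, not loose ends.
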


As mentioned in Remark~\ref{rmk:p=1}, the $m=1$ cases of this theorem and of Theorem~\ref{thm:trace} imply that the atomic characterization and the norm~\eqref{eqn:Slobodekij} are equivalent in the case $p=1$.

The remainder of Section~\ref{sec:extension:dirichlet} will be devoted to a proof of this theorem.

Our goal is to show that if $\arr\varphi\in \dot W\!A^p_{\theta}(\partial\Omega)$, then $\arr\varphi=\Tr_{m-1}^\Omega\Phi$ for some $\Phi\in \dot W_{m,av}^{p,\theta,\infty}(\Omega)$. 

Recall the definition~\ref{dfn:whitney} of $\dot W\!A^p_{\theta}(\partial\Omega)$. If $p=\infty$ then $\arr\varphi=\Tr_{m-1}^\Omega\varphi$ for some $\varphi$ with $\nabla^{m-1}\varphi\in C^\theta(\Omega)$, while if $p<\infty$ then $\{\Tr_{m-1}^\Omega\varphi:\nabla^m\varphi\in L^\infty(\R^\dmn),\>\varphi $ compactly supported$\}$ is dense in $\dot W\!A^p_{\theta}(\partial\Omega)$. In either case, we may consider only arrays $\arr\varphi$ that satisfy $\arr\varphi=\Tr_{m-1}^\Omega\varphi$ for some $\varphi$ such that $\nabla^{m-1}\varphi$ is H\"older continuous up to the boundary.

Fix such an extension~$\varphi$. 
We claim that there is a function $\Phi\in {\dot W_{m,av}^{p,\theta,\infty}(\Omega)}$ with $\Tr_{m-1}^\Omega\Phi=\Tr_{m-1}^\Omega\varphi$ that satisfies
\begin{equation*}\doublebar{\Phi}_{\dot W_{m,av}^{p,\theta,\infty}(\Omega)}\leq C \doublebar{\Tr_{m-1} \varphi}_{\dot B^{p,p}_\theta(\partial\Omega)}=C \doublebar{\arr\varphi}_{\dot W\!A^p_{\theta}(\partial\Omega)}.\end{equation*} 
This suffices to prove the theorem.

We will follow closely the proof of \cite[Proposition 7.3]{MazMS10}. The main differences in our case are, first, that \cite[Proposition 7.3]{MazMS10} does not discuss the case $p\leq 1$, and second, that we have chosen to work with homogeneous spaces.

Let $\varphi_\gamma(y)=\partial^\gamma\varphi(y)$ for any multiindex $\gamma$ with $\abs{\gamma}\leq m-1$. 
Define
\begin{equation*}
P_\gamma(x,y)=\sum_{\zeta\geq\gamma,\>\abs{\zeta}\leq m-1} \frac{1}{(\zeta-\gamma)!} \varphi_{\zeta}(y)\,(x-y)^{\zeta-\gamma}
\end{equation*}
and let $P(x,y)=P_{\vec 0}(x,y)$. Notice that $p(x)=P_\gamma(x,y)$ is a Taylor expansion of $\partial^\gamma\varphi(x)$ around the point $x=y$; in particular, $p(x)$ is a polynomial in~$x$, and if $\abs{\gamma}=m-1$ then $P_\gamma(x,y)=\varphi_\gamma(y)$. Furthermore, $\partial_x^\delta P_\gamma(x,y)=P_{\gamma+\delta}(x,y)$.

Define
\begin{equation*}\mathcal{E}\arr \varphi(x)=\int_{\partial\Omega} K(x,y)\,P(x,y)\,d\sigma(y)
\end{equation*}
for all $x\in\Omega$, where $K(x,y):\Omega\times\partial\Omega\mapsto\R$ is a kernel that satisfies the requirements
\begin{align*}
\int_{\partial\Omega} K(x,y)\,d\sigma(y)&=1 &&\text{for all $x\in\Omega$}
,\\
\abs{\partial^\gamma_x K(x,y)}&\leq \frac{C_\gamma}{ \dist(x,\partial\Omega)^{\dmnMinusOne+\abs{\gamma}}}
 &&\text{for all $x\in\Omega$, all $y\in\partial\Omega$, and all $\gamma\geq 0$}
 ,\\
 K(x,y)&=0 && \text{whenever $\abs{x-y}\geq 2\dist(x,\partial\Omega)$}.
\end{align*}

We wish, first, to bound $\nabla^m \mathcal{E}\arr\varphi(x)$, and, second, to show that $\Tr_{m-1}^\Omega\mathcal{E}\arr\varphi=\arr\varphi$.

Let $x\in\Omega$. We assume first that $\dist(x,\partial\Omega)\leq r_\Omega/C$, where $r_\Omega$ is the natural length scale of Definition~\ref{dfn:domain}. (If $\Omega$ is a Lipschitz graph domain then this is true for all $x\in\Omega$.)
Then for any multiindex~$\alpha$,
\begin{align*}
\partial^\alpha\mathcal{E}\arr \varphi(x)
&=
	\sum_{\delta \leq \alpha} \frac{\alpha!}{\delta!(\alpha-\delta)!} \int_{\partial\Omega} \partial_x^{\alpha-\delta} K(x,y)\,\partial_x^{\delta} P(x,y)\,d\sigma(y)
.\end{align*}
Observe that if $\abs{\delta}>m-1$ then $\partial_x^\delta P(x,y)=0$, and so we may disregard terms of higher order. Furthermore, recall that $\int K(x,y)\,d\sigma(y)=1$ is independent of~$x$, and so $\int \partial_x^{\alpha-\delta} K(x,y)\,d\sigma(y)=0$ whenever $\delta<\alpha$. Applying these facts, we see that for every $z\in\partial\Omega$,
\begin{align*}
\partial^\alpha\mathcal{E}\arr \varphi(x)
&=
	\sum_{\abs{\delta}\leq m-1,\>\delta < \alpha} \frac{\alpha!}{\delta!(\alpha-\delta)!} \int_{\partial\Omega} \partial_x^{\alpha-\delta} K(x,y)\,(\partial_x^{\delta} P(x,y)-\partial_x^{\delta} P(x,z))\,d\sigma(y)
	\\&\qquad+
	 \int_{\partial\Omega} K(x,y)\,\partial_x^{\alpha} P(x,y)\,d\sigma(y)
.\end{align*}
From \cite[p.~177]{Ste70} we have the formula
\begin{equation*}
\partial_x^{\delta} P(x,y)-\partial_x^{\delta} P(x,z)
=\sum_{\zeta\geq\delta,\>\abs{\zeta}\leq m-1} \frac{1}{(\zeta-\delta)!} (\varphi_{\zeta}(y)-P_{\zeta}(y,z))\, (x-y)^{\zeta-\delta}.
\end{equation*}
This formula may also be verified by observing, first, that it is valid if $\abs{\delta}=m-1$, and, second, that it is valid if $x=y$ for all~$\delta$ and that differentiating both sides with respect to~$x$ yields the same formula with $\abs{\delta}$ increased.

So 
\begin{align*}
\partial^\alpha\mathcal{E}\arr \varphi(x)
&=
	\sum_\delta 	\sum_\zeta \frac{\alpha!}{\delta!(\alpha-\delta)!} 
	\int_{\partial\Omega} \partial_x^{\alpha-\delta} K(x,y)
	\frac{(x-y)^{\zeta-\delta}}{(\zeta-\delta)!} (\varphi_{\zeta}(y)-P_{\zeta}(y,z))\,d\sigma(y)
	\\&\qquad+
	 \int_{\partial\Omega} K(x,y)\,\partial_x^{\alpha} P(x,y)\,d\sigma(y)
\end{align*}
where the sums are over all $\delta$ with $\delta<\alpha$ and $\abs{\delta}\leq m-1$, and over all $\zeta$ with $\zeta\geq\delta$ and $\abs{\zeta}\leq m-1$. Notice that if $\abs{\alpha}\geq m$ then the second term vanishes. 

Let $\Delta(x)=\partial\Omega\cap B(x,2\dist(x,\partial\Omega))$. Recall that by assumption, if $K(x,y)\neq 0$ then $y\in\Delta(x)$. Furthermore, we assumed $\dist(x,\partial\Omega)<r_\Omega/C$, and so we have that $\sigma(\Delta(x))\approx \dist(x,\partial\Omega)^\dmnMinusOne$.

If $z\in\Delta(x)$, then we have the bound
\begin{align*}
\abs{\nabla^{m}\mathcal{E}\arr\varphi(x)}
&\leq
	C\sum_{j=0}^{\!\!\!m-1\!\!\!} 	\sum_{\abs{\zeta}\leq m-1} 
	\int_{\Delta(x)} \abs{\nabla_x^{m-j} K(x,y)}
	\abs{x-y}^{\abs{\zeta}-j} \abs{\varphi_{\zeta}(y)-P_{\zeta}(y,z)}\,d\sigma(y)
\\&\leq
	C
	\sum_{\abs{\zeta}\leq m-1} 
	\dist(x,\partial\Omega)^{1-\pdmn-m+\abs{\zeta}} 
	\int_{\Delta(x)}
	\abs{\varphi_{\zeta}(y)-P_{\zeta}(y,z)}\,d\sigma(y)
.\end{align*}

We may average over all $z\in \Delta(x)$ to see that
\begin{align*}
\abs{\nabla^{m}\mathcal{E}\arr\varphi(x)}
&\leq
	C
	\sum_{\abs{\zeta}\leq m-1} 	 
	\int_{\Delta(x)}
	\int_{\Delta(x)}
	\frac{\abs{\varphi_{\zeta}(y)-P_{\zeta}(y,z)}}
	{\dist(x,\partial\Omega)^{2\pdmnMinusOne+m-\abs{\zeta}}}
	\,d\sigma(y)\,d\sigma(z)
.\end{align*}
If $1\leq q<\infty$, then by H\"older's inequality
\begin{multline}\label{eqn:extension:bound:2}
\abs{\nabla^{m}\mathcal{E}\arr\varphi(x)}^q
\leq
	\sum_{\abs{\zeta}\leq m-1}
	\frac{C(q)}{\dist(x,\partial\Omega)^{\dmnMinusOne+mq-q\abs{\zeta}}}
	\\
	\times	 	 
	\int_{\Delta(x)}
	\int_{\Delta(x)}
	\frac{\abs{\varphi_{\zeta}(y)-P_{\zeta}(y,z)}^q}
	{\dist(x,\partial\Omega)^{\dmnMinusOne}}
	\,d\sigma(y)\,d\sigma(z)
.\end{multline}
We now must bound the quantity $\abs{\varphi_{\zeta}(y)-P_{\zeta}(y,z)}$.

If $\abs{\zeta}=m-1$ then $P_\zeta(y,z)=\varphi_{\zeta}(z)$. If $\abs{\zeta}<m-1$, recall that $p(y)=P_\zeta(y,z)$ is the Taylor polynomial for $\varphi_\zeta$ expanded around the base point $y=z$. We may thus use standard error estimates for Taylor polynomials to bound $\varphi_\zeta(y)-P_\zeta(y,z)$.

Recall that $\dist(x,\partial\Omega)\leq r_\Omega/C$. Then $\Delta(x)\subset\partial V_j$ for some Lipschitz graph domain, as in Definition~\ref{dfn:domain}. Let $V_j=\{(x',t):t>\psi(x')\}$ for some Lipschitz function~$\psi$. Let $z=(z',\psi(z'))$.

Now, let $\widetilde\Delta(z',r)$ be the ball in $\R^\dmnMinusOne$ centered at $z'$ of radius~$r$. Let $\eta$ be a Lipschitz function defined on $\widetilde\Delta(z',r)$ with $\eta(z')=0$, so that we may bound $\eta(y')=\eta(y')-\eta(z')$ by an appropriate integral of~$\nabla\eta$. It is an elementary exercise in multivariable calculus to establish that
\begin{equation*}
\int_{\widetilde\Delta(z',r)} \frac{\abs{\eta(y')}}{\abs{y'-z'}^\dmnMinusOne}\,dy'
\leq
	r\int_{\widetilde\Delta(z',2r)} \frac{\abs{\nabla\eta(y')}}{\abs{y'-z'}^\dmnMinusOne}\,dy'
.\end{equation*}
Let $q\geq 1$ and let $\theta$ be a Lipschitz function defined on $\widetilde\Delta(z',r)$ with $\theta(z')=0$. Applying the previous inequality to the function $\eta(y')=\abs{\theta(y')}^q$ and using H\"older's inequality, we see that
\begin{equation*}
\int_{\widetilde\Delta(z',r)} \frac{\abs{\theta(y')}^q}{\abs{y'-z'}^\dmnMinusOne}\,dy'
\leq
	q^q\,r^q\int_{\widetilde\Delta(z',r)} \frac{\abs{\nabla\theta(y')}^q}{\abs{y'-z'}^\dmnMinusOne}\,dy'
.\end{equation*}
We now choose $\theta(y')=\varphi_\zeta(y',\psi(y'))-P_\zeta((y',\psi(y')),z)$; $\theta$ is then a Lipschitz function, albeit is not smooth. 
We then have that 
\begin{equation*}
	\int_{\Delta(z,r)}
	\frac{\abs{\varphi_{\zeta}(y)-P_{\zeta}(y,z)}^q}
	{\abs{y-z}^{\dmnMinusOne}}
	\,d\sigma(y)
\leq
C(q)\, r^q
	\sum_{\abs{\xi}=\abs{\zeta}+1} 	 
	\int_{\Delta(z,r)}
	\frac{\abs{\varphi_{\xi}(y)-P_{\xi}(y,z)}^q}
	{\abs{y-z}^{\dmnMinusOne}}
	\,d\sigma(y)
\end{equation*}
for any $\abs{\zeta}< m-1$, where ${\Delta(z,r)}=\{(s',\psi(s')):s'\in {\widetilde\Delta(z',r)}\}$. 
We may choose some $r\approx\dist(x,\partial\Omega)$ such that $\Delta(x)\subset\Delta(z,r)$ for all $z\in\Delta(x)$; if $\dist(x,\partial\Omega)$ is small enough then we may also choose $r$ small enough that 
$\Delta(z,r)\subset\partial\Omega\cap\partial V$. 

By induction, we have that 
\begin{multline*}
\abs{\nabla^{m}\mathcal{E}\arr\varphi(x)}^q
\leq
	\sum_{\abs{\zeta}= m-1}
	\frac{C(q)}{\dist(x,\partial\Omega)^{\dmnMinusOne+q}}
	\\
	\times
	\int_{\Delta(x)}
	\int_{\Delta(z,r)}
	\frac{\abs{\varphi_{\zeta}(y)-P_{\zeta}(y,z)}^q}
	{\abs{y-z}^{\dmnMinusOne}}
	\,d\sigma(y)\,d\sigma(z)
.\end{multline*}
But if $\abs{\zeta}=m-1$ then $P_\zeta(y,z)=\varphi_\zeta(z)$, and so 
\begin{equation}
\label{eqn:extension:bound:3}
\abs{\nabla^{m}\mathcal{E}\arr\varphi(x)}^q
\leq
	\frac{C(q)}{\dist(x,\partial\Omega)^{\dmnMinusOne+q}}
	\int_{\Delta(x)}
	\int_{\Delta(z,r)}
	\frac{\abs{\arr\varphi(y)-\arr\varphi(z)}^q}
	{\abs{y-z}^{\dmnMinusOne}}
	\,d\sigma(y)\,d\sigma(z)
\end{equation}
for all $x\in\Omega$ with $\dist(x,\partial\Omega)< r_\Omega/C_0$.

If $\partial\Omega$ is compact then we must consider $x$ with $\dist(x,\partial\Omega)\geq r_\Omega/C_0$. Notice that if $r_\Omega/2C_0<\dist(x,\partial\Omega)<r_\Omega/C_0$, then 
\begin{equation}\label{eqn:extension:bound:4}
\abs{\nabla^{m}\mathcal{E}\arr\varphi(x)}^q
\leq
	\frac{C(q)}{r_\Omega^{\dmnMinusOne+q}}
	\int_{\partial\Omega}
	\int_{\partial\Omega}
	\frac{\abs{\arr\varphi(y)-\arr\varphi(z)}^q}
	{\abs{y-z}^{\dmnMinusOne}}
	\,d\sigma(y)\,d\sigma(z)
.\end{equation}
The right-hand side is independent of~$x$.

Let $\eta$ be a smooth cutoff function such that $\eta(x)=1$ when $\dist(x,\partial\Omega)<r_\Omega/2C_0$ and $\eta(x)=0$ when $\dist(x,\partial\Omega)>r_\Omega/C_0$. Let $\widetilde P_{\arr\varphi}$ be the polynomial of degree $m-1$ that satisfies
\begin{equation*}\int_{r_\Omega/2C_0<\dist(x,\partial\Omega)<r_\Omega/C_0} \bigl(\partial^\gamma \mathcal{E}\arr \varphi(x)-\partial^\gamma \widetilde P_{\arr\varphi}(x)\bigr)\,dx=0\end{equation*}
for all $\abs\gamma\leq m-1$.

Let $\widetilde{\mathcal{E}}\arr\varphi=\eta(\mathcal{E}\arr \varphi-\widetilde P_{\arr\varphi})+\widetilde P_{\arr\varphi}$. 
We claim that $\widetilde{\mathcal{E}}\arr\varphi$ satisfies the bound~\eqref{eqn:extension:bound:3} for all $x\in\Omega$.

There are three cases to consider.
If $\dist(x,\partial\Omega)\leq r_\Omega/2C_0$, then ${\nabla^{m}\widetilde{\mathcal{E}}\arr\varphi(x)} = {\nabla^{m}{\mathcal{E}}\arr\varphi(x)}$. If $\dist(x,\partial\Omega)\geq r_\Omega/2C_0$, then ${\nabla^{m}\widetilde{\mathcal{E}}\arr\varphi(x)}=0$. If $r_\Omega/2C_0< \dist(x,\partial\Omega) <r_\Omega/C_0$, then 
\begin{equation*}\abs{\nabla^{m}\widetilde{\mathcal{E}}\arr\varphi(x)}
\leq C\sum_{j=0}^m \abs{\nabla^{m-j} \eta(x)} \abs{\nabla^{j} ( \mathcal{E}\arr \varphi(x)- \widetilde P_{\arr\varphi}(x))}.
\end{equation*}
Let $\widetilde \Omega = \{x:{r_\Omega/2C_0<\dist(x,\partial\Omega)<r_\Omega/C_0}\}$. If $C_0$ is large enough, then $\widetilde\Omega$ is connected.
If $\abs\gamma=j$, then
\begin{align*}\abs{\partial^\gamma ( \mathcal{E}\arr \varphi(x)- \widetilde P_{\arr\varphi}(x))}
&=
	\abs[bigg]{\partial^\gamma ( \mathcal{E}\arr \varphi(x)- \widetilde P_{\arr\varphi}(x))-\fint_{\widetilde\Omega}\partial^\gamma( \mathcal{E}\arr \varphi- \widetilde P_{\arr\varphi})}
\\&\leq 
	C r_\Omega \doublebar{\nabla\partial^\gamma( \mathcal{E}\arr \varphi- \widetilde P_{\arr\varphi})}_{L^\infty(\widetilde\Omega)}.
\end{align*}
An induction argument yields the bound 
\[\abs{\nabla^{j} ( \mathcal{E}\arr \varphi(x)- \widetilde P_{\arr\varphi}(x))}\leq Cr_\Omega^{m-j}\doublebar{\nabla^m \mathcal{E}\arr \varphi}_{L^\infty(\widetilde\Omega)}.\] Applying the bound~\eqref{eqn:extension:bound:4} and imposing the bound $\abs{\nabla^{m-j}\eta}\leq Cr_\Omega^{j-m}$, we have that
\begin{equation*}\abs{\nabla^{m}\widetilde{\mathcal{E}}\arr\varphi(x)}^q
\leq
	\frac{C(q)}{r_\Omega^{\dmnMinusOne+q}}
	\int_{\partial\Omega}
	\int_{\partial\Omega}
	\frac{\abs{\arr\varphi(y)-\arr\varphi(z)}^q}
	{\abs{y-z}^{\dmnMinusOne}}
	\,d\sigma(y)\,d\sigma(z)
\end{equation*}
for all $x\in\widetilde\Omega$.

Thus, we have that
\begin{equation}
\label{eqn:extension:bound}
\abs{\nabla^{m}\widetilde{\mathcal{E}}\arr\varphi(x)}^q
\leq
	\frac{C(q)}{\dist(x,\partial\Omega)^{\dmnMinusOne+q}}
	\int_{\Delta'(x)}
	\int_{\Delta'(x)}
	\frac{\abs{\arr\varphi(y)-\arr\varphi(z)}^q}
	{\abs{y-z}^{\dmnMinusOne}}
	\,d\sigma(y)\,d\sigma(z)
\end{equation}
for all $x\in\Omega$. Here $\Delta'(x)=\{y\in\partial\Omega:\abs{x-y}<C_1\dist(x,\partial\Omega)\}$ for some $C_1$ sufficiently large; in particular, we require $C_1$ to be large enough that, if $\dist(x,\partial\Omega)>r_\Omega/2C_0$, then $\partial\Omega=\Delta'(x)$.

By letting $\Delta''(x)=\{y\in\partial\Omega:\abs{x-y}<C_2\dist(x,\partial\Omega)\}$ for some $C_2>C_1$ large enough, we may establish the bound
\begin{align*}
\sup_{B(x,\dist(x,\partial\Omega)/2)}
\abs{\nabla^{m}\widetilde{\mathcal{E}}\arr\varphi}^q
&\leq
	\frac{C(q)}{\dist(x,\partial\Omega)^{\dmnMinusOne+q}}
	\\&\qquad\qquad\times
	\int_{\Delta''(x)}
	\int_{\Delta''(x)}
	\frac{\abs{\arr\varphi(y)-\arr\varphi(z)}^q}
	{\abs{y-z}^{\dmnMinusOne}}
	\,d\sigma(y)\,d\sigma(z)
.\end{align*}

If $p=\infty$, take $q=1$. Then ${\arr \varphi}$ lies in the space $\dot C^\theta(\partial\Omega)={\dot B^{\infty,\infty}_\theta(\partial\Omega)}$ of H\"older continuous functions. Thus
\begin{align*}
\abs{\nabla^{m}\widetilde{\mathcal{E}}\arr\varphi(x)}
&\leq
	\frac{C\doublebar{\arr\varphi}_{\dot B^{\infty,\infty}_\theta(\partial\Omega)}}
	{\dist(x,\partial\Omega)^{\pdmnMinusOne+1}}
	\int_{\Delta''(x)}
	\int_{\Delta''(x)}
	\frac{\abs{y-z}^{\theta}}
	{\abs{y-z}^{\dmnMinusOne}}
	\,d\sigma(y)\,d\sigma(z)
\\&\leq
	\frac{C\doublebar{\arr\varphi}_{\dot B^{\infty,\infty}_\theta(\partial\Omega)}}
	{\dist(x,\partial\Omega)^{1-\theta}}
\end{align*}
for all $x\in\Omega$, and so $\doublebar{\nabla^{m}\widetilde{\mathcal{E}}\arr\varphi}_{L_{av}^{\infty,\theta,\infty}(\Omega)}\leq C \doublebar{\arr\varphi}_{\dot B^{\infty,\infty}_\theta(\partial\Omega)}$.

If $1\leq p<\infty$, then we let $q=p$ and see that
\begin{multline*}
\int_\Omega
\doublebar{\nabla^m\widetilde{\mathcal{E}}\arr\varphi}_{L^\infty(B(x,\Omega))}^p
\,\dist(x,\partial\Omega)^{p-1-p\theta}\,dx
\\\begin{aligned}
&\leq
	C(p)\int_\Omega
	\int_{\Delta''(x)}
	\int_{\Delta''(x)}
	\frac{\abs{\arr\varphi(y)-\arr\varphi(z)}^p}
	{\abs{y-z}^{\dmnMinusOne}}
	\,d\sigma(y)\,d\sigma(z)
	\,\dist(x,\partial\Omega)^{-p\theta-\dmn}\,dx
.\end{aligned}\end{multline*}
Interchanging the order of integration we see that
\begin{multline*}
\int_\Omega
\doublebar{\nabla^m\widetilde{\mathcal{E}}\arr\varphi}_{L^\infty(B(x,\Omega))}^p
\,\dist(x,\partial\Omega)^{p-1-p\theta}\,dx
\\\begin{aligned}
&\leq
	C(p)
	\int_{\partial\Omega}
	\int_{\partial\Omega}
	\frac{\abs{\arr\varphi(y)-\arr\varphi(z)}^p}
	{\abs{y-z}^{\dmnMinusOne}}
	\int_{A(y,z)}\dist(x,\partial\Omega)^{-p\theta-\dmn}\,dx
	\,d\sigma(y)\,d\sigma(z)
\end{aligned}\end{multline*}
where $A(y,z)= \{x\in\Omega: y\in\Delta''(x), \>z\in\Delta''(x)\}$. Notice that if $x\in A(y,z)$ then $$\dist(x,\partial\Omega)\approx\abs{x-y}\approx\abs{x-z};$$ thus, it may be readily seen that the inner integral is at most $C\abs{y-z}^{-p\theta}$, and so 
\begin{multline*}
\int_\Omega
\doublebar{\nabla^m\widetilde{\mathcal{E}}\arr\varphi}_{L^\infty(B(x,\Omega))}^p
\,\dist(x,\partial\Omega)^{p-1-p\theta}\,dx
\\\begin{aligned}
&\leq
	C(p)
	\int_{\partial\Omega}
	\int_{\partial\Omega}
	\frac{\abs{\arr\varphi(y)-\arr\varphi(z)}^p}
	{\abs{y-z}^{\dmnMinusOne+p\theta}}
	\,d\sigma(y)\,d\sigma(z)
\end{aligned}\end{multline*}
as desired.

Finally, suppose that $\pmin<p\leq 1$. Again take $q=1$. Recall that $\arr\varphi=\sum_j \lambda_j \arr a_j$, where each $\arr a_j$ is an atom supported in $B(x_j,r_j)\cap\partial\Omega$, and where $\doublebar{\arr\varphi}_{\dot B^{p,p}_\theta(\partial\Omega)}^p \approx \sum_j \abs{\lambda_j}^p$.
Then
\begin{multline*}
\int_\Omega
\doublebar{\nabla^m\widetilde{\mathcal{E}}\arr\varphi}_{L^\infty(B(x,\Omega))}^p
\,\dist(x,\partial\Omega)^{p-1-p\theta}\,dx
\\\begin{aligned}
&\leq
	C\int_\Omega
	\biggl( 
	\int_{\Delta''(x)}
	\int_{\Delta''(x)}
	\frac{\abs{\arr\varphi(y)-\arr\varphi(z)}}
	{\abs{y-z}^{\dmnMinusOne}}
	\,d\sigma(y)\,d\sigma(z) \biggr)^p \!\!\!
	\,\dist(x,\partial\Omega)^{p-1-p\theta-p\pdmn}\,dx
.\end{aligned}\end{multline*}
But if $p\leq 1$, then
\begin{multline*}
\biggl( 
	\int_{\Delta''(x)}
	\int_{\Delta''(x)}
	\frac{\abs{\arr\varphi(y)-\arr\varphi(z)}}
	{\abs{y-z}^{\dmnMinusOne}}
	\,d\sigma(y)\,d\sigma(z) \biggr)^p
\\\begin{aligned}
&=
	\biggl(\int_{\Delta''(x)}
	\int_{\Delta''(x)}
	\frac{\abs{\sum_j\lambda_j(\arr a_j(y)-\arr a_j(z))}}
	{\abs{y-z}^{\dmnMinusOne}}
	\,d\sigma(y)\,d\sigma(z)\biggr)^p
\\&\leq
	\sum_j \abs{\lambda_j}^p \biggl(\int_{\Delta''(x)}
	\int_{\Delta''(x)}
	\frac{\abs{\arr a_j(y)-\arr a_j(z)}}
	{\abs{y-z}^{\dmnMinusOne}}
	\,d\sigma(y)\,d\sigma(z)\biggr)^p
.\end{aligned}\end{multline*}
Now, if $\arr a_j$ is not identically zero in $\Delta''(x)$, then $r_j+C_2\dist(x,\partial\Omega)>\abs{x-x_j}$, so either $\abs{x-x_j}<2r_j$ or $\abs{x-x_j}\approx \dist(x,\partial\Omega)$. If $\abs{x-x_j}<2r_j$, then by Definition~\ref{dfn:besov},
\begin{multline*}
\biggl(\int_{\Delta''(x)}
	\int_{\Delta''(x)}
	\frac{\abs{\arr a_j(y)-\arr a_j(z)}}
	{\abs{y-z}^{\dmnMinusOne}}
	\,d\sigma(y)\,d\sigma(z)\biggr)^p
\\\begin{aligned}
&\leq
	r_j^{p\theta-p-\pdmnMinusOne}
	\biggl(\int_{\Delta''(x)}
	\int_{\Delta''(x)}
	\frac{1}
	{\abs{y-z}^{d-2}}
	\,d\sigma(y)\,d\sigma(z)\biggr)^p
\\&\leq C r_j^{p\theta-p-\pdmnMinusOne}\dist(x,\partial\Omega)^{p\pdmn}
.\end{aligned}\end{multline*}
In the other case, if $2r_j<\abs{x-x_j}\approx \dist(x,\partial\Omega)$, then because $a_j$ is supported in ${B(x_j,2r_j)\cap\partial\Omega}$ we have that
\begin{multline*}
\biggl(\int_{\Delta''(x)}
	\int_{\Delta''(x)}
	\frac{\abs{\arr a_j(y)-\arr a_j(z)}}
	{\abs{y-z}^{\dmnMinusOne}}
	\,d\sigma(y)\,d\sigma(z)\biggr)^p
\\\begin{aligned}
&\leq
	\biggl(\int_{B(x_j,2r_j)\cap\partial\Omega}
	\int_{B(x_j,2r_j)\cap\partial\Omega}
	\frac{\abs{\arr a_j(y)-\arr a_j(z)}}
	{\abs{y-z}^{\dmnMinusOne}}
	\,d\sigma(y)\,d\sigma(z)\biggr)^p
	\\&\qquad+
	2\biggl(\int_{B(x_j,2r_j)\cap\partial\Omega}
	\int_{\Delta''(x)\setminus B(x_j,2r_j)}
	\frac{\abs{\arr a_j(z)}}
	{\abs{y-z}^{\dmnMinusOne}}
	\,d\sigma(y)\,d\sigma(z)\biggr)^p
.\end{aligned}\end{multline*}
We bound the first integral as before. To bound the second integral, we observe that
\begin{equation*}{\Delta''(x)\setminus B(x_j,2r_j)} \subset \bigcup_{k=0}^K B(x_j,2^{k+1}r_j)\setminus B(x_j,2^{k}r_j)\end{equation*}
where $K=C \ln (\dist(x,\partial\Omega)/r_j)$. Furthermore,
\begin{equation*}{\int_{B(x_j,2r_j)\cap\partial\Omega}
	\int_{\partial\Omega \cap B(x_j,2^{k+1}r_j)\setminus B(x_j,2^{k}r_j)}
	\frac{1}
	{\abs{y-z}^{\dmnMinusOne}}
	\,d\sigma(y)\,d\sigma(z)}
	\leq C r_j^{\dmnMinusOne}
\end{equation*}
and so by Definiton~\ref{dfn:besov},
\begin{multline*}
\biggl(\int_{\Delta''(x)}
	\int_{\Delta''(x)}
	\frac{\abs{\arr a_j(y)-\arr a_j(z)}}
	{\abs{y-z}^{\dmnMinusOne}}
	\,d\sigma(y)\,d\sigma(z)\biggr)^p
\\\begin{aligned}
&\leq
	Cr_j^{\pdmnMinusOne(p-1)+\theta p}\bigl(\ln(\dist(x,\partial\Omega)/r_j)\bigr)^p
.\end{aligned}\end{multline*}
Thus,
\begin{multline*}
\int_\Omega
\doublebar{\nabla^m\widetilde{\mathcal{E}}\arr\varphi}_{L^\infty(B(x,\Omega))}^p
\,\dist(x,\partial\Omega)^{p-1-p\theta}\,dx
\\\begin{aligned}
&\leq
	C\sum_j \abs{\lambda_j}^p 
	\int_{\abs{x-x_j}<2r_j}
	r_j^{p\theta-p-\pdmnMinusOne}
	\,\dist(x,\partial\Omega)^{p-1-p\theta}\,dx
	\\&\qquad
	+C\sum_j \abs{\lambda_j}^p 
	\int_{{2r_j<\abs{x-x_j} }}
	\frac{\bigl(\ln(\abs{x-x_j}/r_j)\bigr)^p} {r_j^{\pdmnMinusOne(1-p)-\theta p} } 
	\abs{x-x_j}^{p-1-p\theta-p\pdmn}\,dx.
\end{aligned}\end{multline*}
The first integral converges because $p>0$ and $\theta<1$, while the second integral converges because $p>\pmin$. Thus
\begin{equation*}
\int_\Omega
\doublebar{\nabla^m\widetilde{\mathcal{E}}\arr\varphi}_{L^\infty(B(x,\Omega))}^p
\,\dist(x,\partial\Omega)^{p-1-p\theta}\,dx
\leq C\sum_j \abs{\lambda_j}^p
\end{equation*}
as desired.

We now need to show that $\Tr_{m-1}\mathcal{E}\arr\varphi=\arr\varphi$. Recall that if $\abs{\gamma}= m-1$, then for all $z\in\partial\Omega$ and all $x\in\Omega$ sufficiently close to~$\partial\Omega$, we have that
\begin{align*}
\partial^\gamma\mathcal{E}\arr \varphi(x)
&=
	\sum_\delta 	\sum_\zeta \frac{\gamma!}{\delta!(\gamma-\delta)!} 
	\int_{\partial\Omega} \partial_x^{\gamma-\delta} K(x,y)
	\frac{(x-y)^{\zeta-\delta}}{(\zeta-\delta)!} (\varphi_{\zeta}(y)-P_{\zeta}(y,z))\,d\sigma(y)
	\\&\qquad+
	 \int_{\partial\Omega} K(x,y)\,\varphi_\gamma(y)\,d\sigma(y)
\end{align*}
where the sums are over all $\delta$ with $\delta<\gamma$ and $\abs{\delta}\leq m-1$, and over all $\zeta$ with $\delta\leq\zeta$ and $\abs{\zeta}\leq m-1$. 
Observe that because $P_\gamma(y,z)=\varphi_\gamma(z)$, we have that
\begin{align*}\int_{\partial\Omega} K(x,y)\,\varphi_\gamma(y)\,d\sigma(y) 
&= \int_{\partial\Omega} K(x,y)\,(\varphi_\gamma(y)-P_\gamma(y,z))\,d\sigma(y)
	\\&\qquad+\int_{\partial\Omega} K(x,y)\,\varphi_\gamma(z)\,d\sigma(y)
\end{align*}
and so we may write
\begin{align*}
\partial^\gamma\mathcal{E}\arr \varphi(x)
&=
	\sum_\delta 	\sum_\zeta \frac{\gamma!}{\delta!(\gamma-\delta)!} 
	\int_{\partial\Omega} \partial_x^{\gamma-\delta} K(x,y)
	\frac{(x-y)^{\zeta-\delta}}{(\zeta-\delta)!} (\varphi_{\zeta}(y)-P_{\zeta}(y,z))\,d\sigma(y)
	\\&\qquad
	+\int_{\partial\Omega} K(x,y)\,\varphi_\gamma(z)\,d\sigma(y)
\end{align*}
where the sums are now over all $\delta$ with $\delta\leq\gamma$. 
Recall that by assumption on~$K$ the second integral is equal to~$\varphi_\gamma(z)$; we need only show that as $x\to z$ in some sense the first term vanishes.

Fix some $z\in \partial\Omega$. 
Recall that
\begin{equation*}P_\zeta(y,z) =\sum_{\abs{\xi}\leq m-1-\abs{\zeta}} \frac{(y-z)^\xi}{\xi!} \partial^\xi\varphi_{\zeta}(z). \end{equation*}
Let $f(r)=\varphi_\zeta(z+r(y-z))$. Then
\begin{equation*}\varphi_\zeta(y)-P_\zeta(y,z) = f(1)-\sum_{j=0}^{m-1-\abs{\zeta}} \frac{1}{j!} f^{(j)}(0).\end{equation*}
By induction, we may establish that
\begin{equation*}f(1)-\sum_{j=0}^{n} \frac{1}{j!} f^{(j)}(0)
=\int_0^1 \int_{0}^{r_1}\dots \int_0^{r_{n-1}} \bigr(f^{(n)}(r_n)-f^{(n)}(0)\bigl)\,dr_n \dots dr_2\,dr_1.
\end{equation*}
Notice that this is not quite the standard form of the Taylor remainder of single-variable calculus.
Then 
\begin{align*}
\abs{f^{(n)}(r)-f^{(n)}(0)}
&\leq 
	C(\theta)\,r^\theta \doublebar{f^{(n)}}_{\dot C^\theta((0,1))}
	\leq
	C(\theta)\,r^\theta\abs{y-z}^{n+\theta}\doublebar{\nabla^n \varphi_\zeta}_{\dot C^\theta(\R^\dmn)}
.\end{align*}
Let $n=m-1-\abs{\zeta}$. If $p=\infty$ then by assumption $\doublebar{\nabla^{m-1} \varphi}_{\dot C^\theta(\R^\dmn)}<\infty$, while if $p<\infty$ then by assumption $\varphi$ is smooth and compactly supported. In either case, we have that
\begin{equation*}\abs{\varphi_\zeta(y)-P_\zeta(y,z)}\leq C\abs{y-z}^{m-1-\abs{\zeta}+\theta}\doublebar{\nabla^{m-1} \varphi}_{\dot C^\theta(\R^\dmn)}\end{equation*}
and the right-hand side is finite.

Recall that if $j\geq 0$, then $\abs{\nabla_x^j K(x,y)}\leq C_j\dist(x,\partial\Omega)^{1-\pdmn-j}$. Furthermore, recall that $K(x,y)=0$ unless $\abs{x-y}<2\dist(x,\partial\Omega)$. Finally, observe that $\dist(x,\partial\Omega)\leq \abs{x-z}$.
Thus, 
\begin{equation*}
	\int_{\partial\Omega} 
	\abs[big]{\partial_x^{\gamma-\delta} K(x,y)
	 (x-y)^{\zeta-\delta} (\varphi_{\zeta}(y)-P_{\zeta}(y,z))}\,d\sigma(y)
	 \leq
	 C\doublebar{\nabla^{m-1} \varphi}_{\dot C^\theta(\R^\dmn)}
	 \abs{x-z}^{\theta}
\end{equation*}
and so $\partial^\gamma \mathcal{E}\arr\varphi(x)\to \varphi_\gamma(z)$ as $\abs{x-z}\to 0$. This completes the proof.

\section{Traces: Dirichlet boundary data}
\label{sec:trace:dirichlet}

In this section we complete the proof of Theorem~\ref{thm:dirichlet} by proving the following theorem.

\begin{thm}
\label{thm:trace}
Suppose that 
$0<\theta<1$, that $\pdmnMinusOne/(\dmnMinusOne+\theta)<p\leq\infty$, and that $1\leq q\leq \infty$. Let $\Omega$ be a Lipschitz domain with connected boundary. 

Then the trace operator $\Tr_{m-1}$ is bounded $\dot W_{m,av}^{p,\theta,q}(\Omega)\mapsto \dot W\!A^p_{\theta}(\partial\Omega)$.

If $p=1$, this is true whether we use atoms or the norm~\eqref{eqn:Slobodekij} to characterize $\dot B^{p,p}_\theta(\partial\Omega)$; that is, 
\begin{align*}\int_{\partial\Omega} \int_{\partial\Omega} \frac{\abs{\Tr_{m-1}^\Omega \Phi(x)-\Tr_{m-1}^\Omega \Phi(y)}}{\abs{x-y}^{\dmnMinusOne+\theta}}\,d\sigma(x)\,d\sigma(y)
&\leq C \doublebar{\Phi}_{\dot W^{1,\theta,q}_{m,av}(\Omega)}
,\\
\inf\Bigl\{\sum_j\abs{\lambda_j}:\Tr_{m-1}^\Omega \Phi= \arr c_0+\sum_j \lambda_j\,\arr  a_j,\> \arr c_0\text{ constant},\> \arr  a_j\text{  atoms}\Bigr\}
&\leq C\doublebar{\Phi}_{\dot W^{1,\theta,q}_{m,av}(\Omega)}
\end{align*}
for all $\Phi\in {\dot W^{1,\theta,q}_{m,av}(\Omega)}$.
\end{thm}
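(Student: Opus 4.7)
The plan is first to reduce to smooth $\Phi$ via Theorem~\ref{thm:smooth:dense} (using strong approximation when $p<\infty$, and the weak-approximation statement when $p=\infty$ together with continuity of the Slobodekij/H\"older norm in duality against test functions), and then to reduce to the case where $\Omega$ is a Lipschitz graph domain $V=\{(x',t):t>\psi(x')\}$ via a partition of unity subordinate to the cylinders $Z_j$ of Definition~\ref{dfn:domain}. The target norms on $\partial\Omega$ are local in nature, and the interior contribution to $\doublebar{\Phi}_{\dot W_{m,av}^{p,\theta,q}(\Omega)}$ handles the gluing error.

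The core estimate is a three-segment path identity. Fix $\abs{\gamma}=m-1$, let $x=(x',\psi(x'))$ and $y=(y',\psi(y'))$ lie in $\partial V$, set $r=\abs{x-y}$, and choose $h\approx (1+\doublebar{\nabla\psi}_{L^\infty})\,r$. With $x^{*}=x+h\,e_\dmn$ and $y^{*}=y+h\,e_\dmn$, the broken path $x\to x^{*}\to y^{*}\to y$ lies inside $V$ at distance comparable to $r$ from $\partial V$, and
\begin{equation*}
\partial^\gamma\Phi(x)-\partial^\gamma\Phi(y)=\int_{\text{path}}\nabla\partial^\gamma\Phi\cdot d\ell,
\end{equation*}
whose integrand is a component of $\nabla^m\Phi$. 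Averaging each segment transversely over parallel tubes of width $\approx r$ via Fubini replaces pointwise values by averages over balls of the form $B(z,\dist(z,\partial V)/2)$ appearing in~\eqref{eqn:norm:L}. The net bound is $\abs{\partial^\gamma\Phi(x)-\partial^\gamma\Phi(y)}\leq C\,r\,A_{xy}$, where $A_{xy}$ is an $L^q$-average of $\abs{\nabla^m\Phi}$ over a region $T_{xy}\subset V$ with $\diam T_{xy}\approx r$ and $\dist(T_{xy},\partial V)\approx r$; essentially $T_{xy}$ is $T(Q)$ for the smallest cube $Q\in\mathcal{H}$ (in the notation of Section~\ref{sec:L}) containing both $x'$ and $y'$.

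For $1\leq p<\infty$ with the Slobodekij characterization~\eqref{eqn:Slobodekij}, inserting this bound and grouping pairs $(x,y)$ by their associated $Q$ reduces the right-hand side to $\sum_{Q\in\mathcal{H}}\bigl(\int_{T(Q)}\abs{\nabla^m\Phi}\bigr)^{p}\ell(Q)^{\dmnMinusOne-p\theta-p\pdmnMinusOne}$, which is $\leq C\doublebar{\Phi}_{\dot W_{m,av}^{p,\theta,q}(V)}^{p}$ by~\eqref{eqn:L:L1:2} of Lemma~\ref{lem:L:L1}. For $p=\infty$, the same estimate yields H\"older continuity of $\partial^\gamma\Phi$ with exponent $\theta$ directly. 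For $\pmin<p\leq 1$ (and for the atomic bound when $p=1$), I will follow the Frazier--Jawerth-style construction from~\cite{BarM16A}: using a smooth partition of unity $\{\eta_Q\}$ on $\partial V$ at dyadic scales and Whitney averages $c_{\gamma,Q}=\fint_{W(Q)}\partial^\gamma\Phi$, I telescope through dyadic parents $P(Q)$ to obtain $\partial^\gamma\Phi-c_{\gamma,Q_0}=\sum_{Q}\lambda_Q a_Q$, with $a_Q$ proportional to $(c_{\gamma,Q}-c_{\gamma,P(Q)})\eta_Q$ renormalized to be $\dot B^{p,p}_\theta(\partial\Omega)$-atoms in the sense of Definition~\ref{dfn:besov}. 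The Poincar\'e estimate of Lemma~\ref{lem:Poincare:L} yields $\abs{c_{\gamma,Q}-c_{\gamma,P(Q)}}\leq C\ell(Q)\fint_{W(Q)\cup W(P(Q))}\abs{\nabla^m\Phi}$, so H\"older in $q$ combined with~\eqref{eqn:L:norm:dyadic} gives $\sum_Q\abs{\lambda_Q}^{p}\leq C\doublebar{\Phi}_{\dot W_{m,av}^{p,\theta,q}(V)}^{p}$.

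The main obstacle will be the atomic construction for $p<1$: one must simultaneously verify the support, $L^\infty$, and gradient bounds of Definition~\ref{dfn:besov} for each $a_Q$; establish convergence of $\sum_Q\lambda_Q a_Q$ to $\partial^\gamma\Phi$ (modulo a constant), which relies on the up-to-the-boundary local integrability of $\nabla^m\Phi$ from Lemma~\ref{lem:L:L1}; and confirm that the resulting boundary array lies in the Whitney--Besov subspace $\dot W\!A^p_\theta(\partial\Omega)$ rather than merely in $(\dot B^{p,p}_\theta)^r$. This last point is automatic from Theorem~\ref{thm:smooth:dense}, which realizes $\Tr_{m-1}^\Omega\Phi$ as a $\dot B^{p,p}_\theta$-limit of traces of smooth compactly supported extensions, all of which belong to the defining dense set~\eqref{eqn:Whitney:dense}.
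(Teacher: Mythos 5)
Your proposal takes a genuinely different route from the paper for the bulk of the range and has a gap at $p=\infty$.

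For $\pmin<p<\infty$ you prove the estimate directly (broken-path integral for $p\geq 1$, Frazier--Jawerth telescoping of Whitney averages for $p\leq 1$), whereas the paper outsources the $m=1$ half-space case to \cite[Theorems~6.3 and~6.9]{BarM16A}, transports it to Lipschitz graph domains by a change of variables, glues to compact-boundary domains by a partition of unity plus a localization lemma for Besov functions supported in surface balls, and finally passes to $m>1$ by noting $\partial^\gamma u\in\dot W^{p,\theta,q}_{1,av}(\Omega)$ for $\abs\gamma=m-1$. Both are legitimate; your version is self-contained and works at arbitrary $m$ from the start, at the cost of re-deriving the content of \cite{BarM16A}. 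Your use of \eqref{eqn:L:L1:2} to absorb the sum over tents, and of Lemma~\ref{lem:Poincare:L} to bound the telescoped differences $\abs{c_{\gamma,Q}-c_{\gamma,P(Q)}}$, are consistent with the machinery the paper builds in Section~\ref{sec:L}. Note that the phrase ``the target norms on $\partial\Omega$ are local in nature'' understates the gluing issue when $\partial\Omega$ is compact: the Slobodekij integrand couples distant boundary pieces, and the paper actually proves a separate lemma (the one about functions supported in $B(x_0,(3/2)r)\cap\partial\Omega$) to control the cross-terms; you would need the same.

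For $p=\infty$ there is a real gap. Showing that the trace $\Tr_{m-1}^\Omega\Phi$ is H\"older continuous on $\partial\Omega$ does not by itself place it in $\dot W\!A^\infty_\theta(\partial\Omega)$: by Definition~\ref{dfn:whitney}, membership requires exhibiting a function $\Psi$ on $\Omega$ with $\nabla^{m-1}\Psi\in\dot C^\theta(\Omega)$ and $\Tr_{m-1}^\Omega\Psi=\Tr_{m-1}^\Omega\Phi$. Your closing paragraph handles the Whitney compatibility for $p<\infty$ by appealing to Theorem~\ref{thm:smooth:dense} and closedness of $\dot W\!A^p_\theta$ inside $(\dot B^{p,p}_\theta)^r$, but this does not transfer to $p=\infty$: $\dot W\!A^\infty_\theta$ is not defined as a closure of the dense set \eqref{eqn:Whitney:dense}, and Theorem~\ref{thm:smooth:dense} gives only weak approximation when $p=\infty$. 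The paper handles this by constructing the regularization operator $T\phi(x)=\int_\Omega K(x,y)\,P(x,y)\,dy$ (with $K$ built from Dahlberg's adapted distance function and $P$ the order-$(m-1)$ Taylor polynomial), proving both that $\nabla^{m-1}T\varphi\in\dot C^\theta(\Omega)$ and that $\Tr_{m-1}^\Omega T\varphi=\Tr_{m-1}^\Omega\varphi$, so $T\varphi$ serves as the required extension. Your path-plus-averaging argument is implicitly defining a smoothing of $\Phi$; if you promote that smoothing to an explicit operator and verify that it preserves the trace while producing a globally $\dot C^{m-1,\theta}$ function on $\Omega$, the gap closes, but as written the $p=\infty$ case is incomplete.
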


As mentioned in Remark~\ref{rmk:p=1}, the $m=1$ cases of this theorem and of Theorem~\ref{thm:trace} imply that the atomic characterization and the norm~\eqref{eqn:Slobodekij} are equivalent in the case $p=1$.

The remainder of Section~\ref{sec:trace:dirichlet} will be devoted to a proof of this theorem.


\subsection{The case \texorpdfstring{$p=\infty$}{of Holder continuous functions}} 

In this section we will prove Theorem~\ref{thm:trace} in the case $p=\infty$. 

We must show that if $\varphi\in \dot W_{m,av}^{\infty,\theta,q}(\Omega)$, then $\Tr_{m-1}^\Omega\varphi\in \dot W\!A^\infty_\theta(\partial\Omega)$ with $\dot W\!A^\infty_\theta$-norm controlled by the $\dot W_{m,av}^{\infty,\theta,q}(\Omega)$-norm of~$\varphi$.
Recall from the definition~\ref{dfn:whitney} that 
\begin{gather*}\dot W\!A^\infty_\theta(\partial\Omega)=\{\Tr_{m-1}^\Omega\Phi:\nabla^{m-1}\Phi\in \dot C^\theta(\Omega)\},\\
\doublebar{\Tr_{m-1}^\Omega \varphi}_{\dot W\!A^\infty_\theta(\partial\Omega)}= \doublebar{\Tr_{m-1}^\Omega \varphi}_{\dot C^\theta(\partial\Omega)}.\end{gather*} Thus, to prove Theorem~\ref{thm:trace} in the case $p=\infty$, we must show both that $\Tr_{m-1}^\Omega \varphi$ is H\"older continuous, and that there is a function $\Phi=T\varphi$ in $\dot C^{m-1,\theta}(\Omega)$ such that $\Tr_{m-1}^\Omega \varphi=\Tr_{m-1}^\Omega T\varphi$. 


Furthermore, recall that we are using the Sobolev space definition of the trace map. That is, by Lemma~\ref{lem:L:L1}, if $\varphi\in \dot W_{m,av}^{\infty,\theta,q}(\Omega)$, then $\varphi \in \dot W^1_m(V)$ for any $V\subset \Omega$ bounded, and so we may define $\Tr_{m-1}^\Omega$ on $\dot W_{m,av}^{\infty,\theta,q}(\Omega)$ using its definition on $\dot W^1_{m,loc}(\overline\Omega)$.

Let $\delta(x)$ be the adapted distance function introduced in the proof of \cite[Theorem~7]{Dah86B}.
Specifically, if $V$ is a Lipschitz graph domain $V=\{(x',t):x'\in\R^\dmnMinusOne,\>t>\psi(x)\}$, let $\rho(x',t)=ct+\theta_t*\psi(x)$, where $\theta$ is smooth, compactly supported, and integrates to~$1$, and where $\theta_t(y)=t^{-\pdmnMinusOne}\theta(y/t)$. It is possible to choose $c$ large enough that $\partial_t \rho(x',t)>1$ for all $x'\in\R^\dmnMinusOne$ and all $t>0$. We let $\delta(x',t)$ satisfy $\rho(x',\delta(x',t))=(x',t)$. Then 
$\delta$ satisfies 
\begin{equation}\label{eqn:distance-function}\delta(x)\approx\dist(x,\partial\Omega) \quad\text{and}\quad \abs{\nabla^k\delta(x)}\leq C\dist(x,\partial\Omega)^{1-k}\end{equation}
for all $0\leq k\leq m+1$. Using a partition of unity argument, we may construct a function $\delta(x)$ that satisfies the conditions~\eqref{eqn:distance-function} if $\Omega$ is a bounded Lipschitz domain as well.

Suppose that $\phi\in \dot W^1_{m,loc}(\overline\Omega)$.
As in Section~\ref{sec:extension:dirichlet}, let $p(x)=P(x,y)$ be the Taylor polynomial of $\phi$ about the point $y$ of order $m-1$,
\begin{equation*}P(x,y)=\sum_{\abs{\zeta}\leq m-1} \frac{1}{\zeta!} \partial^\zeta\phi(y)\, (x-y)^\zeta.\end{equation*}

Let $\eta$ be smooth, radial and compactly supported, with $\int_{\R^\dmn} \eta=1$. We will impose further conditions on $\eta$ momentarily. Let $K(x,y)= \delta(x)^{-\pdmn}\eta\bigl(\delta(x)^{-1}(y-x)\bigr)$, so that $\int_\Omega K(x,y)\,dy=1$ for each $x\in\Omega$. (We will use this kernel $K$ on $\Omega\times\Omega$; this differs from the kernel of Section~\ref{sec:extension:dirichlet} inasmuch as that kernel was used on $\Omega\times\partial\Omega$.)

Define
\begin{equation*}T\phi(x)=\int_\Omega K(x,y)\,P(x,y)\,dy
.\end{equation*}
Then $T\phi$ is locally $C^{m+1}$ in~$\Omega$. We will show that, if $V\subset\Omega$ is a bounded set, then $T$ is a bounded operator $\dot W^1_{m}(U)\mapsto \dot W^1_m(V)$ for some bounded set $U$ with $V\subseteq U\subseteq\Omega$. We will also show that if $\phi$ is smooth, then $\nabla^{m-1}T\phi$ is continuous up to the boundary and satisfies $\nabla^{m-1}\phi=\nabla^{m-1} T\phi$ on~$\partial\Omega$; by the definition of the trace map, this implies that $\Tr_{m-1}^\Omega\phi=\Tr_{m-1}^\Omega T\phi$ for any $\phi\in \dot W^1_{m,loc}(\overline\Omega)$. Finally, we will show that if $\varphi \in  \dot W_{m,av}^{\infty,\theta,q}(\Omega)$ then $\nabla^{m-1} T\varphi$ is H\"older continuous in~$\Omega$, as desired.

Suppose that $\gamma$ is a multiindex with $\abs{\gamma}=m-1$ or $\abs{\gamma}=m$. Then
\begin{align*}\partial^\gamma T\phi(x) 
&= \sum_{\xi\leq \gamma} 
	\frac{\gamma!}{\xi!(\gamma-\xi)!} \int_\Omega \partial_x^\xi K(x,y)\, \partial_x^{\gamma-\xi} P(x,y)\,dy
.\end{align*}


By definition of $P(x,y)$, we have that
\begin{align*}\partial^\gamma T\phi(x) 
&= 
	 \sum_{\substack{\xi\leq \gamma\\ \hbox to 0pt{\hss$\scriptstyle\abs{\zeta}\leq m-1 ,\> \zeta\geq \gamma-\xi$\hss}}  } C_{\gamma,\xi,\zeta} 
	\int_\Omega \partial_x^\xi K(x,y)\,\partial^\zeta\phi(y)\, (x-y)^{\zeta+\xi-\gamma}\,dy
\end{align*}
for some constants $C_{\gamma,\xi,\zeta}$. 

Let $a>0$ be a number such that $K(x,y)$ (regarded as a function of~$y$) is supported in ${B(x,a\dist(x,\partial\Omega))}$; by choosing $\eta$ appropriately we may make $a$ as small as we like.
Let $\widetilde P_x(y)$ be the polynomial of degree $m-1$ such that
\begin{equation*}\int_{B(x,a\dist(x,\partial\Omega))}
\bigl(\partial^\zeta\phi(y) - \partial_y^\zeta\widetilde P_{x}(y)\bigr)\,dy=0\end{equation*}
for any multiindex $\zeta$ with $0\leq\abs{\zeta}\leq m-1$.

Then
\begin{align*}\partial^\gamma T\phi(x) 
&= 
	 \sum_{\substack{\abs\zeta\leq m-1 \\ \gamma-\zeta\leq \xi\leq\gamma}  } C_{\gamma,\zeta,\xi} 
	\int_\Omega \partial_x^\xi K(x,y)\,\partial_y^\zeta(\phi(y)-\widetilde P_x(y))\, (x-y)^{\zeta+\xi-\gamma}\,dy
	\\&\qquad+
	 \sum_{\substack{\abs\zeta\leq m-1 \\ \gamma-\zeta\leq \xi\leq\gamma}  } C_{\gamma,\zeta,\xi}
	\int_\Omega \partial_x^\xi K(x,y)\,\partial_y^\zeta \widetilde P_x(y)\, (x-y)^{\zeta+\xi-\gamma}\,dy
\\&=I(x)+II(x)
.\end{align*}
By definition of~$K$,
\begin{align*}
\abs{I(x)}&\leq
\sum_{\substack{\abs\zeta\leq m-1 \\ \gamma-\zeta\leq \xi\leq\gamma}  } C_{\gamma,\zeta} 
\dist(x,\partial\Omega)^{\abs\zeta-\abs\gamma-\pdmn} \int_{B(x,a\dist(x,\partial\Omega))} 
\abs{\partial_y^\zeta(\phi(y)-\widetilde P_x(y))} \,dy
.\end{align*}
We may control the integral by the Poincar\'e inequality, and so
\begin{equation*}\abs{I(x)}\leq C\dist(x,\partial\Omega)^{m-\abs{\gamma}}\fint_{B(x,a\dist(x,\partial\Omega))} \abs{\nabla^m\phi}.\end{equation*}
In particular, notice that if $\abs{\gamma}=m-1$ and $\phi$ is smooth then $I(x)\to 0$ as $x\to\partial\Omega$, and so $\partial^\gamma T\phi = II$ on~$\partial\Omega$. 

We now consider the second term $II(x)$.
We impose the additional requirement that $\int\eta(y)\,y^\zeta\,dy=0$ for all $\zeta$ with $1\leq\abs{\zeta}\leq m$; this implies that $\int K(x,y) \,p(y)\,dy=p(x)$ for any polynomial of degree at most~$m$. Thus, 
\begin{align*}
II (x)
&=
	\sum_{\substack{\abs\zeta\leq m-1 \\ \gamma-\zeta\leq \xi\leq\gamma}  } C_{\gamma,\zeta,\xi}\,
	\partial_z^{\xi} \bigl(\partial_z^\zeta\widetilde P_x(z)(x-z)^{\zeta+\xi-\gamma}\bigr)\big\vert_{z=x}
\\&=
	\sum_{\substack{\abs\zeta\leq m-1 \\ \gamma-\zeta\leq \xi\leq\gamma}  } C_{\gamma,\zeta,\xi}
	\sum_{\alpha\leq\xi}
	\frac{\xi!}{\alpha!(\xi-\alpha)!}
	\bigl(\partial_z^{\zeta+\alpha}\widetilde P_x(z) \, \partial_z^{\xi-\alpha}(x-z)^{\zeta+\xi-\gamma}\bigr)\big\vert_{z=x}
.\end{align*}
Notice that $\partial_z^{\xi-\alpha}(x-z)^{\zeta+\xi-\gamma}\big\vert_{z=x}=0$ unless $\alpha=\gamma-\zeta$, in which case it is a constant depending only on $\zeta$, $\xi$ and~$\gamma$. Thus, there is some constant $C_\gamma$ such that 
\begin{align*}
II (x)
&=
	C_\gamma
	\bigl(\partial_z^{\gamma}\widetilde P_x(z) \bigr)\big\vert_{z=x}
.\end{align*}
If $\abs{\gamma}=m$ then $\partial_z^\gamma \widetilde P_x(z)=0$ and so $II(x)=0$. 
If $\abs{\gamma}=m-1$, then 
\begin{equation*}\partial_z^\gamma \widetilde P_x(z) = \fint_{B(x,a\dist(x,\partial\Omega))} \partial^\gamma \phi
\quad\text{and so}\quad
II(x)
=
	C_\gamma
	\fint_{B(x,a\dist(x,\partial\Omega))} \partial^\gamma \phi
.\end{equation*}
We now claim that $C_\gamma=1$ whenever $\abs{\gamma}=m-1$. This may be most easily seen by observing that, if $\phi(x)$ is a polynomial of degree $m-1$, then $P(x,y)=\phi(x)$ and so $T\phi(x)=\phi(x)$, and also that $\widetilde P_x(y)=\phi(y)$ and so $I(x)=0$. In particular, if $\phi(x)=x^\gamma$ then
\begin{align*}
\gamma!=\partial^\gamma x^\gamma
&= \partial^\gamma T\phi(x) = II (x)
= 
C_\gamma
	\fint_{B(x,a\dist(x,\partial\Omega))} \partial^\gamma y^\gamma \,dy
	=C_\gamma \,\gamma!
\end{align*}
and so $C_\gamma=1$.

By our above bound on $I(x)$, if $\phi \in \dot W^1_{m,loc}(\overline\Omega)$, then
\begin{align*}
\abs{\nabla^m T\phi(x)}&\leq C \fint_{B(x)} \abs{\nabla^m \phi},\\
\abs[bigg]{\nabla^{m-1} T\phi(x)-\fint_{B(x)} \nabla^{m-1} \phi}
&\leq C \dist(x,\partial\Omega)\fint_{B(x)} \abs{\nabla^m \phi}
\end{align*}
where $B(x)={B(x,a\dist(x,\partial\Omega))}$.
Thus, if $\phi$ is smooth, then $\nabla^{m-1}T\phi(x)$ is continouous up to the boundary and satisfies $\nabla^{m-1}T\phi=\nabla^{m-1}\phi$ on~$\partial\Omega$. Furthermore, using a Whitney decomposition, we see that that $T$ is bounded on $\dot W^1_{m,loc}(\overline\Omega)$, and so by density $\Tr_{m-1}^\Omega T\phi=\Tr_{m-1}^\Omega\phi$ for all $\phi\in \dot W^1_{m,loc}(\overline\Omega)$.

We now return to the case of functions $\varphi\in \dot W_{m,av}^{\infty,\theta,q}(\Omega)$. By the definition \eqref{eqn:norm:L} and by H\"older's inequality,  if $q\geq 1$ then
\begin{equation*}
\fint_{B(x,\dist(x,\partial\Omega)/2)} \abs{\nabla^m \varphi} \leq \doublebar{\varphi}_{\dot W_{m,av}^{\infty,\theta,q}(\Omega)} \dist(x,\partial\Omega)^{\theta-1}.\end{equation*}
Thus
\begin{equation*}
\abs{\nabla^m T\varphi(x)}\leq
	C \doublebar{\varphi}_{\dot W_{m,av}^{\infty,\theta,q}(\Omega)} \dist(x,\partial\Omega)^{\theta-1}
.\end{equation*}
From this we may easily show that, if $\Omega$ is a Lipschitz domain, then $\nabla^{m-1}T\varphi$ is H\"older continuous in~$\Omega$ with exponent $\theta$ and $\dot C^\theta$-norm $C\doublebar{\varphi}_{\dot W_{m,av}^{\infty,\theta,q}(\Omega)} $. Thus, $\Tr_{m-1}^\Omega \varphi=\Tr_{m-1}^\Omega T\varphi$ lies in the space $\dot W\!A^\infty_\theta(\partial\Omega)$, as desired.

\subsection{The case \texorpdfstring{$p<\infty$}{of finite p}} We now consider traces of $\dot W_{m,av}^{p,\theta,q}(\Omega)$ for $p<\infty$.
If $\Omega=\R^\dmn_+$ is a half-space, then the following trace theorem was established in \cite{BarM16A}.
\begin{thm}[{\cite[Theorems~6.3 and~6.9]{BarM16A}}] \label{thm:trace:half-space}
Suppose  $1\leq q\leq \infty$, $0<\theta<1$ and $\pdmnMinusOne/(\dmnMinusOne+\theta)<p<\infty$. Then the trace operator $\Trace$ extends to an operator that is bounded 
\begin{equation*}\Trace:\dot W_{1,av}^{p,\theta,q}(\R^\dmn_+)\mapsto \dot B^{p,p}_\theta(\R^\dmnMinusOne).\end{equation*}
\end{thm}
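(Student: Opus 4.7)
The plan is to prove the trace bound in the half-space case by combining a Whitney decomposition of $\R^\dmn_+$ with the two convenient characterizations of $\dot B^{p,p}_\theta(\R^\dmnMinusOne)$: the Slobodeckij integral for $p>1$ and the atomic decomposition for $p\leq 1$. First, by Lemma~\ref{lem:L:L1} with $q\geq 1$ we have that $\nabla u$ is locally integrable up to $\partial\R^\dmn_+$, so $\Trace u$ is well defined as a locally integrable function in the standard Sobolev-trace sense; by Theorem~\ref{thm:smooth:dense} it suffices to establish the estimate for a smooth representative.

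The core construction uses the dyadic grid $\mathcal{H}=\bigcup_j \mathcal{H}_j$ on $\R^\dmnMinusOne$ together with the Whitney boxes $W(Q)$ of \eqref{eqn:whitney}. For each $Q\in\mathcal{H}$ set $u_Q=\fint_{W(Q)}u$. If $Q_k\in\mathcal{H}_{-k}$ is the dyadic cube of side $2^{-k}$ containing $x'$, then $u_{Q_k}\to \Trace u(x')$ for a.e.\ $x'\in\R^\dmnMinusOne$, and the Poincar\'e inequality on $W(Q_k)\cup W(Q_{k+1})$ yields
\begin{equation*}
\abs{u_{Q_k}(x')-u_{Q_{k+1}}(x')}\leq C\,\ell(Q_k)\biggl(\fint_{W(Q_k)}\abs{\nabla u}^q\biggr)^{1/q}.
\end{equation*}
This telescoping identity is the engine for both regimes.

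For $p>1$, I would bound the Slobodeckij norm directly. Given $x',y'$ with $r=\abs{x'-y'}\approx 2^{-N}$, pick $Q^*,\widetilde Q^*\in\mathcal{H}_{-N}$ containing $x'$ and $y'$ respectively, and split
\begin{equation*}
\Trace u(x')-\Trace u(y')=\bigl(\Trace u(x')-u_{Q^*}\bigr)+\bigl(u_{Q^*}-u_{\widetilde Q^*}\bigr)+\bigl(u_{\widetilde Q^*}-\Trace u(y')\bigr).
\end{equation*}
The outer pieces are telescoping sums bounded via the Poincar\'e estimate above; the middle piece is controlled by a Poincar\'e inequality on an enlarged region of scale $\ell(Q^*)\approx r$. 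Raising to the $p$th power, integrating against $\abs{x'-y'}^{-\dmnMinusOne-p\theta}\,dx'\,dy'$, and reorganizing the double integral into a sum over dyadic cubes recovers exactly the dyadic norm \eqref{eqn:L:norm:dyadic} with weight $\ell(Q)^{\dmnMinusOne+p-p\theta}$, that is, $\doublebar{u}^p_{\dot W_{1,av}^{p,\theta,q}}$.

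For $\pmin<p\leq 1$, I would instead construct an atomic decomposition. Fix a large cube $Q_0$ and write
\begin{equation*}
\Trace u - u_{Q_0}=\sum_{Q\subsetneq Q_0}\lambda_Q\,a_Q,
\end{equation*}
where each $a_Q$ is built from the parent/child difference $u_Q-u_{P(Q)}$ (smoothly cut off to $Q$) and normalized so that $\doublebar{a_Q}_{L^\infty}\leq \ell(Q)^{\theta-\pdmnMinusOne/p}$ and $\doublebar{\nabla a_Q}_{L^\infty}\leq \ell(Q)^{\theta-1-\pdmnMinusOne/p}$, making $a_Q$ a $\dot B^{p,p}_\theta$-atom; the coefficient $\lambda_Q$ then satisfies $\abs{\lambda_Q}\leq C\,\ell(Q)^{\pdmnMinusOne/p+1-\theta}(\fint_{W(Q)}\abs{\nabla u}^q)^{1/q}$. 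The $\ell^p$-summability $\sum_Q\abs{\lambda_Q}^p\leq C\doublebar{u}^p_{\dot W_{1,av}^{p,\theta,q}}$ then follows immediately from the dyadic norm \eqref{eqn:L:norm:dyadic}. The main obstacle is this atomic case: one has to organize the telescoping so that the logarithmic factor $j^{\max(p-1,0)}$ arising from the $\ell^p$ triangle inequality is absorbed by the geometric gain $2^{-j(p-p\theta)}$ — precisely the mechanism seen in the proof of Lemma~\ref{lem:Poincare:L} — and this absorption is possible exactly because $0<\theta<1$ and $p>\pmin$, which is also the range in which an $\ell^\infty$ atom has enough smoothness to decompose into $\dot B^{p,p}_\theta$-atoms on the boundary.
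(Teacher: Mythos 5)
Your sketch is a genuine proof attempt, whereas the paper does not prove this statement at all: Theorem~\ref{thm:trace:half-space} is imported verbatim from \cite{BarM16A} (Theorems~6.3 and~6.9 there), and the paper's work in Section~\ref{sec:trace:dirichlet} only concerns extending it from the half-space to Lipschitz graph domains (by the flattening change of variables) and to domains with compact boundary (by a partition of unity plus the lemma on localizing Besov norms). So any complete argument here is ``a different route'' from the paper's; yours is the classical Uspenski\u\i-type argument, and it is essentially the mechanism used inside the cited work and already visible in Lemma~\ref{lem:Poincare:L}: Whitney averages $u_Q=\fint_{W(Q)}u$, parent/child telescoping controlled by Poincar\'e, the Slobodekij norm \eqref{eqn:Slobodekij} for $p>1$, and Lipschitz atoms with coefficients $\abs{\lambda_Q}\lesssim \ell(Q)^{\pdmnMinusOne/p+1-\theta}\bigl(\fint_{W(Q)}\abs{\nabla u}^q\bigr)^{1/q}$ for $p\leq 1$, whose $\ell^p$ sum is exactly the dyadic norm \eqref{eqn:L:norm:dyadic}. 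The outline is sound, but be aware that the two crux steps are asserted rather than carried out: (i) for $p>1$, after splitting $\Trace u(x')-\Trace u(y')$ into telescoping tails plus a matching-scale piece, summing the tails inside the double integral requires a Hardy-type (or geometric-weight H\"older) inequality, and this is where $0<\theta<1$ is genuinely used; (ii) the identification $u_{Q_k}\to\Trace u(x')$ a.e.\ and the absolute convergence of the telescoping series rest on Lemma~\ref{lem:L:L1}, which is also the true source of the restriction $p>\pmin$ (your closing remark attributes it to atom smoothness, but your atoms are Lipschitz and would be admissible throughout; the constraint enters through local integrability of $\nabla u$ up to the boundary and the convergence of the tent sums). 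Also, the pieces of your atomic decomposition necessarily involve neighboring cubes at each scale (the cutoffs overlap), so the coefficients should be stated in terms of oscillations over a bounded-overlap enlargement of $W(Q)$; this changes nothing in the final bound. With those points supplied, your argument gives a self-contained proof, which the present paper deliberately avoids by citation.
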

Observe that we may extend Theorem~\ref{thm:trace:half-space} to any Lipschitz graph domain $\Omega=\{(x',t):t>\psi(x')\}$ by means of the change of variables $(x',t)\mapsto (x',t-\psi(x'))$. 
To complete the proof in the case $m=1$, we need only extend Theorem~\ref{thm:trace:half-space} to Lipschitz domains with compact boundary.

Let $\Omega$ be such a domain, and let $u\in \dot W_{1,av}^{p,\theta,q}(\Omega)$.
Let $\{\varphi_j\}$ be a set of smooth functions such that $\sum_{j=1}^n\varphi_j=1$ in a neighborhood of $\partial\Omega$, where each $\varphi_j$ is supported in the ball $B(x_j,(3/2)r_j)$, where $x_j$ and $r_j$ are as in Definition~\ref{dfn:domain}. 

By Lemma~\ref{lem:L:L1}, we have that $\nabla u\in L^1(B(0,R)\cap\Omega)$ for any $R>0$. Let $u_\Omega=\fint_{\partial\Omega} u\,d\sigma$. Let $u_j(x)=(u(x)-u_\Omega)\varphi_j(x)$. Then $u(x)=u_\Omega+\sum_j u_j(x)$. Notice that constants have $\dot B^{p,p}_\theta(\partial\Omega)$-norm zero, and so we may neglect the $u_\Omega$ term.

We first show that $u_j\in \dot W_{1,av}^{p,\theta,q}(\Omega)$. Let the tents $T(Q)$ be as in Lemma~\ref{lem:Poincare:L}. Notice that $\varphi_j$ is supported in a tent $T(Q_j)$ for some cube~$Q$. By Lemma~\ref{lem:Poincare:L}, we have that $\varphi_j (u-u_{Q_j})\in \dot W_{1,av}^{p,\theta,q}(\Omega)$, where $u_{Q_j}=\fint_{W(Q)} u$. 
By Lemma~\ref{lem:L:L1}, and by boundedness of the trace map from $L^1(U)\cap \dot W^1_1(U)$ to $L^1(\partial U)$ for any Lipschitz domain~$U$, we have that $\abs{u_{Q_j}-u_\Omega}\leq C r_\Omega^{\theta-\pdmnMinusOne/p} \doublebar{\nabla u}_{L_{av}^{p,\theta,q}(\Omega)}$. This implies that $\doublebar{u_j}_{\dot W_{1,av}^{p,\theta,q}(\Omega)} \leq C \doublebar{u}_{\dot W_{1,av}^{p,\theta,q}(\Omega)}$.

If $V_j$ is the Lipschitz graph domain associated to the point $x_j$ in Definition~\ref{dfn:domain}, we have that $u_j\in \dot W_{1,av}^{p,\theta,q}(V_j)$ and so $\Trace u_j\in \dot B^{p,p}_\theta(\partial V_j)$. We now prove the following lemma.

\begin{lem} Let $\Omega$ be a Lipschitz domain, $V$ a Lipschitz graph domain, and suppose that $B(x_0,2r)\cap\Omega=B(x_0,2r)\cap V$, for some $x_0\in \partial\Omega$ and some $r>0$.
Let $0<\theta<1$ and let $\pdmnMinusOne/(\dmnMinusOne+\theta)<p\leq\infty$.

If $f$ is supported in $B(x_0,(3/2)r)\cap\partial\Omega$ and $f\in \dot B^{p,p}_\theta(\partial V)$, then $f\in \dot B^{p,p}_\theta(\partial \Omega)$ with $\doublebar{f}_{\dot B^{p,p}_\theta(\partial \Omega)}\leq C\doublebar{f}_{\dot B^{p,p}_\theta(\partial V)}$. (If $p=1$ we may use either atomic norms or the norm~\eqref{eqn:Slobodekij}.)
\end{lem}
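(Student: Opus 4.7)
The plan is to handle the ranges $1<p\leq\infty$ and $\pmin<p\leq 1$ separately, using the Slobodekij/H\"older characterization in the former and the atomic characterization in the latter; the case $p=1$ falls under both, which is exactly the claim the lemma allows.

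For $1<p<\infty$ I would expand
\[\doublebar{f}_{\dot B^{p,p}_\theta(\partial\Omega)}^p = \int_{\partial\Omega}\int_{\partial\Omega}\frac{\abs{f(x)-f(y)}^p}{\abs{x-y}^{\dmnMinusOne+p\theta}}\,d\sigma(x)\,d\sigma(y)\]
and split according to whether $x,y$ lie in $B(x_0,2r)$. When both points lie in $B(x_0,2r)\cap\partial\Omega=B(x_0,2r)\cap\partial V$ the integrand coincides with that for $\partial V$ and is dominated by $\doublebar{f}_{\dot B^{p,p}_\theta(\partial V)}^p$. When both lie outside $B(x_0,3r/2)$ the integrand vanishes. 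For the cross terms ($x\in B(x_0,3r/2)$, $y\in\partial\Omega\setminus B(x_0,2r)$, so $f(y)=0$ and $\abs{x-y}\geq r/2$), the inner integral in $y$ is at most $Cr^{-p\theta}$ by the $\pdmnMinusOne$-dimensional polynomial growth of surface measure on $\partial\Omega$, reducing matters to bounding $\int_{B(x_0,3r/2)\cap\partial V}\abs{f}^p\,d\sigma$ by $Cr^{p\theta}\doublebar{f}_{\dot B^{p,p}_\theta(\partial V)}^p$. This I would obtain from the Poincar\'e-type identity $\abs{f(x)}^p=\fint_A\abs{f(x)-f(y)}^p\,d\sigma(y)$, valid because $f$ vanishes on the annulus $A=(B(x_0,4r)\setminus B(x_0,2r))\cap\partial V$ of measure $\approx r^{\dmnMinusOne}$, together with the bound $\abs{x-y}\leq Cr$ on the relevant range. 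The case $p=\infty$ (H\"older) is analogous but more direct: H\"older continuity of $f$ on $\partial V$ combined with $f\equiv 0$ outside $B(x_0,3r/2)$ immediately yields the H\"older estimate on $\partial\Omega$.

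For $\pmin<p\leq 1$ I would fix $\varepsilon>0$ and take an atomic decomposition $f=c_0+\sum_j\lambda_j a_j$ on $\partial V$ with $(\sum_j\abs{\lambda_j}^p)^{1/p}$ within $\varepsilon$ of $\doublebar{f}_{\dot B^{p,p}_\theta(\partial V)}$. By Remark~\ref{rmk:atoms:converge} each atom sits in $L^{\tilde p}(\partial V)$ uniformly for $\tilde p=p\pdmnMinusOne/(\dmnMinusOne-p\theta)>1$, so $\sum_j\lambda_j a_j$ converges in $L^{\tilde p}(\partial V)$; since the representative of $f$ supported in $B(x_0,3r/2)$ is also in $L^{\tilde p}(\partial V)$ and $\partial V$ has infinite surface measure, $c_0$ must vanish. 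Multiplying by a smooth cutoff $\chi$ equal to $1$ on $B(x_0,3r/2)$ and vanishing outside $B(x_0,7r/4)$ gives $f=\sum_j\lambda_j(\chi a_j)$, and each $\chi a_j$ is supported in $B(x_0,7r/4)\cap\partial V=B(x_0,7r/4)\cap\partial\Omega$, hence is a genuine function on $\partial\Omega$. The central computation is that each $\chi a_j$ is a uniformly bounded multiple of a $\dot B^{p,p}_\theta(\partial\Omega)$-atom at effective radius $t_j:=\min(s_j,r)$, where $s_j$ is the original atomic radius: the bounds $\doublebar{\chi a_j}_\infty\leq s_j^{\theta-\pdmnMinusOne/p}$ and $\doublebar{\nabla(\chi a_j)}_\infty\leq Cs_j^{\theta-1-\pdmnMinusOne/p}+Cr^{-1}s_j^{\theta-\pdmnMinusOne/p}$ translate into the atom normalization at radius $t_j$ because $\theta-\pdmnMinusOne/p\leq 0$ whenever $p\leq 1$ (since $\dmnMinusOne\geq 1>\theta$ forces $\pdmnMinusOne/p\geq\dmnMinusOne>\theta$). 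Summing $p$th powers and letting $\varepsilon\to 0$ gives the required bound.

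The step I expect to be most delicate is the verification that the cutoff $\chi a_j$ still qualifies as an atom in the regime $s_j>r$, where the original atomic scale is replaced by the smaller scale $r$ and the sign of $\theta-\pdmnMinusOne/p$ becomes essential, together with the $L^{\tilde p}$-argument forcing $c_0=0$ so that one obtains an atomic decomposition of $f$ on $\partial\Omega$ with no additional constant. By contrast the Slobodekij splitting, while slightly tedious, is routine once the three-region decomposition is in place.
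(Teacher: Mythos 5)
Your proposal is correct. For $1<p\leq\infty$ the three-region Slobodekij split is essentially the paper's; the only difference is how the estimate $\int_{B(x_0,3r/2)\cap\partial V}\abs{f}^p\,d\sigma\leq Cr^{p\theta}\doublebar{f}_{\dot B^{p,p}_\theta(\partial V)}^p$ is obtained. You integrate against the annulus $A$ where $f$ vanishes; the paper instead extracts the same bound from the cross term of the $\partial V$-seminorm after replacing $\abs{x-y}$ by $\abs{x_0-y}$ and separating the two integrals. The atomic case is where you genuinely diverge. The paper writes $f=f^{x_0,2r}\varphi+\sum_k\lambda_k(a_k-a_k^{x_0,2r})\varphi$ with $f^{x_0,2r}=\fint_{B(x_0,2r)\cap\partial V}f\,d\sigma$, shows each $(a_k-a_k^{x_0,2r})\varphi$ is a bounded multiple of an atom (splitting cases: $r_k\geq r$, which uses the gradient bound on $a_k$ and $\theta<1$; and $r_k\leq r$, which absorbs the mean term $a_k^{x_0,2r}\varphi$ into an atom using $p>\pmin$), and finally estimates $\abs{f^{x_0,2r}}\leq Cr^{\theta-\pdmnMinusOne/p}\doublebar{f}_{\dot B^{p,p}_\theta(\partial V)}$. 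You skip the mean-subtraction entirely: after establishing $c_0=0$ from the $L^{\tilde p}$ convergence of $\sum_j\lambda_j a_j$ and the infinite surface measure of $\partial V$ (a normalization the paper takes as read when it writes $f=\sum_k\lambda_k a_k$), you cut off each atom and check directly that $\chi a_j$ normalizes to an atom at scale $\min(s_j,r)$, the point being that $\theta-\pdmnMinusOne/p<0$ when $p\leq 1$, so the $L^\infty$ and gradient bounds only improve when $s_j$ is replaced by the smaller $r$. Your route is slightly leaner and does not invoke $p>\pmin$; the paper's subtraction of $a_k^{x_0,2r}$ makes its decomposition insensitive to the choice of $c_0$ and so avoids the $L^{\tilde p}$ digression.
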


\begin{proof}
Suppose first that $1\leq p\leq \infty$. We must bound the norm~\eqref{eqn:Slobodekij}. We will divide $\partial\Omega$ into the two regions ${\partial\Omega\cap B(x_0,2r)}$ and $\partial\Omega\setminus B(x_0,2r)$; because the norm \eqref{eqn:Slobodekij} involves two integrals over~$\partial\Omega$, this leaves us with four integrals to bound.

Because ${\partial\Omega\cap B(x_0,2r)}={\partial V \cap B(x_0,2r)}$, we have that
\begin{equation*}\int_{\partial\Omega\cap B(x_0,2r)}
\int_{\partial\Omega\cap B(x_0,2r)} \frac{\abs{ f(x)- f(y)}^p}{\abs{x-y}^{\dmnMinusOne+p\theta}}\,d\sigma(x)\,d\sigma(y)
\leq C\doublebar{f}_{\dot B^{p,p}_\theta(\partial V)}^p
.\end{equation*}
Because $f$ is supported in $B(x_0,(3/2)r)\subset B(x_0,2r)$, we
have that 
\begin{equation*}\int_{\partial\Omega\setminus B(x_0,2r)}
\int_{\partial\Omega\setminus B(x_0,2r)} \frac{\abs{ f(x)-f(y)}^p}{\abs{x-y}^{\dmnMinusOne+p\theta}}\,d\sigma(x)\,d\sigma(y)
=0
.\end{equation*}
By symmetry, and because $f$ is supported in $B(x_0,(3/2)r)$, we need only bound
\begin{equation*}\int_{\partial\Omega\setminus B(x_0,2r)}
\int_{\partial \Omega \cap B(x_0,(3/2)r)} \frac{\abs{ f(x)}^p}{\abs{x-y}^{\dmnMinusOne+p\theta}}\,d\sigma(x)\,d\sigma(y)
.\end{equation*}
We have a bound in $V$, that is,
\begin{equation*}\int_{\partial V\setminus B(x_0,2r)}
\int_{\partial \Omega \cap B(x_0,(3/2)r)} \frac{\abs{ f(x)}^p}{\abs{x-y}^{\dmnMinusOne+p\theta}}\,d\sigma(x)\,d\sigma(y)
\leq C\doublebar{f}_{\dot B^{p,p}_\theta(\partial V)}^p
.\end{equation*}
But if $y\notin B(x_0,2r)$ and $x\in B(x_0,(3/2)r)$, then $\abs{x-y}\approx\abs{x_0-y}$. Thus
\begin{equation*}\int_{\partial V\setminus B(x_0,2r)} \frac{d\sigma(y)}{\abs{x_0-y}^{\dmnMinusOne+p\theta}}
\int_{\partial \Omega\cap B(x_0,2r)} {\abs{ f(x)}^p}\,d\sigma(x)
\leq C\doublebar{f}_{\dot B^{p,p}_\theta(\partial V)}^p.
\end{equation*}
Estimating the first integral, we see that 
\begin{equation*}
\int_{\partial \Omega\cap B(x_0,2r)} {\abs{ f(x)}^p}\,d\sigma(x)
\leq Cr^{p\theta}\doublebar{f}_{\dot B^{p,p}_\theta(\partial V)}^p.
\end{equation*}
Again using the relation $\abs{x-y}\approx\abs{x_0-y}$, we see that 
\begin{equation*}\int_{\partial\Omega\setminus B(x_0,2r)}
\int_{\partial\Omega\cap B(x_0,(3/2)r)} \frac{\abs{ f(x)}^p}{\abs{x-y}^{\dmnMinusOne+p\theta}}\,d\sigma(x)\,d\sigma(y)
\leq C\doublebar{f}_{\dot B^{p,p}_\theta(\partial V)}^p.\end{equation*}
Thus, $ f\in \dot B^{p,p}_\theta(\partial\Omega)$, as desired.

If $\pdmnMinusOne/(\dmnMinusOne+\theta)<p\leq 1$, recall that we characterize $\dot B^{p,p}_\theta(\partial\Omega)$ using atoms. Thus, we may write $f=\sum_k \lambda_k \, a_k$, where $a_k$ is a $\dot B^{p,p}_\theta(\partial V)$-atom and where $\sum_k \abs{\lambda_k}^p\approx \doublebar{f}_{\dot B^{p,p}_\theta(\partial V)}$. 
We now must write $f$ as a sum of $\dot B^{p,p}_\theta(\partial\Omega)$-atoms. 

For any function $h$, let $h^{x_0,2r}=\fint_{B(x_0,2r)\cap\partial V} h\,d\sigma$. 
Let $\varphi$ be a smooth cutoff function, supported in $B(x_0,2r)$ and identically equal to 1 in $B(x_0,(3/2)r)$. Then
\begin{equation*}f=f\varphi = (f-f^{x_0,2r})\varphi + f^{x_0,2r}\varphi
	=f^{x_0,2r}\varphi + \sum_k \lambda_k (a_k-a_k^{x_0,2r})\varphi.\end{equation*}
We claim that $f^{x_0,2r}\varphi=\lambda\,a$ for some atom $a$ and some $\lambda$ with $\abs{\lambda}\leq C \doublebar{f}_{\dot B^{p,p}_\theta(\partial\Omega)}$, and that $(a_k-a_k^{x_0,2r})\varphi$ is a bounded multiple of an atom or sum of two atoms. This suffices to show that $f\in \dot B^{p,p}_\theta(\partial\Omega)$.

We begin with $(a_k-a_k^{x_0,2r})\varphi$.
If $r_k\geq r$, let $\tilde a_k=(a_k-a_k^{x_0,2r})\varphi$. By the bound on $\nabla_\tau a_k$, we have that $\abs{a_k-a_k^{x_0,2r}}\leq C r_k^{\theta-1-\pdmnMinusOne/p} r$ in $\supp\varphi$. Thus, $\abs{\nabla_\tau \tilde a_k}\leq C r_k^{\theta-1-\pdmnMinusOne/p}$. If $\theta<1$ then the exponent is negative, and so $\abs{\nabla_\tau\tilde a_k}\leq C r^{\theta-1-\pdmnMinusOne/p}$. Furthermore, $\tilde a_k$ is supported in $B(x_0,2r)$, and so is a constant multiple of a $\dot B^{p,p}_\theta(\partial\Omega)$-atom.

If $r_k\leq r$, then $\abs{\nabla(a_k\varphi)}\leq Cr_k^{\theta-1-\pdmnMinusOne/p}$ and $a_k\varphi$ is supported in $\supp a_k\cap\supp\varphi \subset B(x_k,r_k)\cap\partial \Omega$, and so $a_k\varphi$ is a multiple of an atom. Furthermore, $\abs{a_k^{x_0,r}}\leq C r_k^{\dmnMinusOne+\theta-\pdmnMinusOne/p} r^{-\pdmnMinusOne}$, and so $\abs{\nabla ( a_k^{x_0,r}\varphi)}\leq Cr_k^{\dmnMinusOne+\theta-\pdmnMinusOne/p} r^{-\dmn}$. If $p>\pmin$, then the exponent of $r_k$ is positive and so $\abs{\nabla ( a_k^{x_0,r}\varphi)}\leq Cr^{\theta-1-\pdmnMinusOne/p}$. Because $\varphi$ is supported in $B(x_0,2r)$, this means that $a_k^{x_0,r}\varphi$ is also a bounded multiple of an atom.

We are left with the term $f^{x_0,2r}\varphi$. 
We begin by bounding the average value of~$f$. 
Observe that
\begin{equation*}\int_{B(x_0,2r)\cap\partial V} \abs{(f-f^{x_0,2r})\varphi}\,d\sigma
\leq
\sum_k \abs{\lambda_k} \int_{B(x_0,2r)\cap\partial V} \abs{(a_k-a_k^{x_0,2r})\varphi}\,d\sigma.
\end{equation*}
By the above arguments, $(a_k-a_k^{x_0,2r})\varphi$ is a multiple of an atom (or two) with characteristic length scale at most~$r$; thus,
\begin{equation*}\int_{B(x_0,2r)\cap\partial V} \abs{(a_k-a_k^{x_0,2r})\varphi}\,d\sigma
\leq C r^{\dmnMinusOne + \theta-\pdmnMinusOne/p}.\end{equation*}
If $p\leq 1$, then 
\begin{equation*}\int_{B(x_0,2r)\cap\partial V} \abs{(f-f^{x_0,2r})\varphi}\,d\sigma
\leq
C r^{\dmnMinusOne + \theta-\pdmnMinusOne/p} \Bigl(\sum_k \abs{\lambda_k}^p\Bigr)^{1/p}
\end{equation*}
and by the definition of the $\dot B^{p,p}_\theta(\partial V)$-norm,
\begin{equation*}\int_{B(x_0,2r)\cap\partial V} \abs{(f-f^{x_0,2r})\varphi}\,d\sigma
\leq
C r^{\dmnMinusOne + \theta-\pdmnMinusOne/p} \doublebar{f}_{\dot B^{p,p}_\theta(\partial V)}.
\end{equation*}
Because $f=0$ in $B(x_0,2r)\setminus B(x_0,(3/2)r)$, we have that
\begin{equation*}\abs{f^{x_0,2r}}\int_{\partial V\cap B(x_0,2r)\setminus B(x_0,(3/2)r)} \abs{\varphi}\,d\sigma
\leq
C r^{\dmnMinusOne + \theta-\pdmnMinusOne/p} \doublebar{f}_{\dot B^{p,p}_\theta(\partial V)}
\end{equation*}
and estimating the left-hand integral, we see that
\begin{equation*}\abs{f^{x_0,2r}}
\leq
C r^{\theta-\pdmnMinusOne/p} \doublebar{f}_{\dot B^{p,p}_\theta(\partial V)}.
\end{equation*}
Observe that $r^{\theta-\pdmnMinusOne/p}\varphi$ is a multiple of a ${\dot B^{p,p}_\theta(\partial \Omega)}$-atom, and so $f^{x_0,2r}\varphi=\lambda\,a$ for some ${\dot B^{p,p}_\theta(\partial \Omega)}$-atom $a$ and some $\abs{\lambda}\leq C\doublebar{f}_{\dot B^{p,p}_\theta(\partial V)}$, as desired.
\end{proof}

Thus, $\Trace u_j\in \dot B^{p,p}_\theta(\partial\Omega)$ for each~$j$.
This completes the proof of Theorem~\ref{thm:trace} in the case $m=1$.

To extend to the case $m>1$, observe that if $u\in \dot W_{m,av}^{p,\theta,q}(\Omega)$, then by definition $\partial^\gamma u\in \dot W_{1,av}^{p,\theta,q}(\Omega)$ for any $\gamma$ with $\abs{\gamma}=m-1$; thus $\Trace^\Omega \partial^\gamma u\in \dot B^{p,p}_\theta(\partial\Omega)$.

By Theorem~\ref{thm:smooth:dense}, smooth functions are dense in $\dot W_{m,av}^{p,\theta,q}(\Omega)$, and if $\varphi$ is smooth then $\Tr_{m-1}^\Omega\varphi$ lies in $\dot W\!A^p_{\theta}(\partial\Omega)$, a closed subspace of $(\dot B^{p,p}_\theta(\partial\Omega))^r$; thus, this is also true for more general $u\in \dot W_{m,av}^{p,\theta,q}(\Omega)$.
This completes the proof.

\section{Extensions: Neumann boundary data}
\label{sec:extension:neumann}

We have now established that $\dot W\!A^p_\theta(\partial\Omega)=\{\Tr u:u\in \dot W_{m,av}^{p,\theta,q}(\Omega)\}$, that is, that the space of Whitney-Besov arrays is the space of Dirichlet traces of $\dot W_{m,av}^{p,\theta,q}(\Omega)$-functions. We would like to similarly identify the space of Neumann traces $\NN=\{\M_m^\Omega\arr G:\arr G\in L_{av}^{p,\theta,q}(\Omega),\allowbreak\>\Div_m\arr G=0\}$.

In this section we show that, if $\Omega$ is any Lipschitz domain and if $\pmin<p\leq\infty$, then $\dot N\!A^p_{\theta-1}(\partial\Omega)\subseteq \NN$. We will not be able to prove the reverse inequality in general, but in Section~\ref{sec:trace:neumann} we will establish that $\dot N\!A^p_{\theta-1}(\partial\Omega)=\NN$ in some special cases.

\begin{thm} 
\label{thm:extension:Neumann}
Suppose that $0<\theta<1$ and that $\pmin<p\leq\infty$.
Let $\Omega$ be a Lipschitz domain with connected boundary.

Suppose that $\arr g\in \dot N\!A^p_{\theta-1}(\partial\Omega)$.
Then there is some $\arr G\in L_{av}^{p,\theta,\infty}(\Omega)$ such that $\Div_m\arr G=0$ in $\Omega$, $\arr g=\M_m^\Omega\arr G$, and such that
\begin{equation*}\doublebar{\arr G}_{L_{av}^{p,\theta,\infty}(\Omega)}\leq C \doublebar{\arr g}_{\dot B^{p,p}_\theta(\partial\Omega)}.\end{equation*}
\end{thm}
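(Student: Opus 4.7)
The argument splits according to $p$: for $1<p\le\infty$ I will use Hahn--Banach duality, while for $\pmin<p\le 1$ I will use atomic decomposition. In both cases the goal is to produce a divergence-free $\arr G\in L^{p,\theta,\infty}_{av}(\Omega)$ with
$$\sum_{\abs{\alpha}=m}\int_\Omega \partial^\alpha\varphi\,G_\alpha=\sum_{\abs{\gamma}=m-1}\int_{\partial\Omega}\Trace\partial^\gamma\varphi\,g_\gamma\,d\sigma$$
for all $\varphi\in C^\infty_0(\R^\dmn)$, together with the claimed norm bound.

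In the case $1<p\le\infty$, I exploit the identification $\dot N\!A^p_{\theta-1}(\partial\Omega)=(\dot W\!A^{p'}_{1-\theta}(\partial\Omega))^*$ from Remark~\ref{rmk:neumann-whitney:p>1}. Define the linear functional $\Lambda(\nabla^m\varphi)=\langle\Tr_{m-1}^\Omega\varphi,\arr g\rangle_{\partial\Omega}$ on the subspace $\{\nabla^m\varphi:\varphi\in C^\infty_0(\R^\dmn)\}\subset L^{p',1-\theta,1}_{av}(\Omega)$. The Dirichlet trace theorem (Theorem~\ref{thm:trace}) applied at $(p',1-\theta,1)$ gives
$$\abs{\Lambda(\nabla^m\varphi)}\le \doublebar{\Tr_{m-1}^\Omega\varphi}_{\dot W\!A^{p'}_{1-\theta}(\partial\Omega)}\,\doublebar{\arr g}_{\dot B^{p,p}_{\theta-1}(\partial\Omega)}\le C\doublebar{\nabla^m\varphi}_{L^{p',1-\theta,1}_{av}(\Omega)}\doublebar{\arr g}_{\dot B^{p,p}_{\theta-1}(\partial\Omega)},$$
so $\Lambda$ extends by Hahn--Banach to all of $L^{p',1-\theta,1}_{av}(\Omega)$. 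Via the duality $(L^{p',1-\theta,1}_{av}(\Omega))^*=L^{p,\theta,\infty}_{av}(\Omega)$ recorded in Section~\ref{sec:L}, this extension is represented by some $\arr G\in L^{p,\theta,\infty}_{av}(\Omega)$ with the stated norm bound. Testing against $\varphi\in C^\infty_0(\Omega)$ (whose trace vanishes) gives $\Div_m\arr G=0$, while testing against arbitrary $\varphi\in C^\infty_0(\R^\dmn)$ gives $\M_m^\Omega\arr G=\arr g$.

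In the case $\pmin<p\le 1$, I decompose each component $g_\gamma=\sum_j\lambda_ja_j$ with $\sum_j\abs{\lambda_j}^p\le C\doublebar{g_\gamma}_{\dot B^{p,p}_{\theta-1}(\partial\Omega)}^p$ and, for each atom $a_j$ supported in $B(x_j,r_j)\cap\partial\Omega$, construct a divergence-free ``Neumann extension atom'' $\arr G_j$. The $\arr G_j$ should be supported in a Carleson box of size $\approx r_j$ over $B(x_j,r_j)\cap\partial\Omega$, should satisfy $\doublebar{\arr G_j}_{L^\infty(\Omega)}\le Cr_j^{\theta-1-\pdmnMinusOne/p}$, and should realize $a_j$ as Neumann data in the appropriate slot~$\gamma$. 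A direct integration of the weight $\dist(\cdot,\partial\Omega)^{p-1-p\theta}$ over the Carleson box then yields $\doublebar{\arr G_j}_{L^{p,\theta,\infty}_{av}(\Omega)}\le C$ uniformly in~$j$. Assembling $\arr G=\sum_j\lambda_j\arr G_j$ and using the $p$-norm inequality for $L^{p,\theta,\infty}_{av}(\Omega)$ gives $\doublebar{\arr G}^p\le\sum_j\abs{\lambda_j}^p\doublebar{\arr G_j}^p\le C\doublebar{\arr g}^p_{\dot B^{p,p}_{\theta-1}(\partial\Omega)}$, and divergence-freeness plus the Neumann identity pass to the sum by the absolute convergence discussion of Remark~\ref{rmk:atoms:converge}.

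The main obstacle is the construction of the individual extensions $\arr G_j$. My plan is to reduce via a bi-Lipschitz chart and a partition of unity to the case where $\Omega$ is a Lipschitz graph domain near~$x_j$, then run the duality argument of the previous case at an auxiliary exponent $\tilde p>1$ (at which the atom has controlled Besov norm, since an $\dot B^{p,p}_{\theta-1}$-atom is automatically an atom of the larger space) to obtain a preliminary global extension. A cutoff exploiting the zero-mean condition $\int a_j\,d\sigma=0$ can then localize this preliminary extension into a Carleson box of size $\approx r_j$ without destroying the divergence-free condition, by subtracting a corrector whose divergence compensates the boundary term introduced by the cutoff; the pointwise bound with the correct power of~$r_j$ will follow from the size properties of the atom combined with standard interior regularity of the preliminary extension.
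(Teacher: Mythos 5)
Your $p>1$ argument is the paper's argument, essentially verbatim: Hahn--Banach on the subspace $\{\nabla^m\Phi : \Phi\in\dot W_{m,av}^{p',1-\theta,1}(\Omega)\}$ of $(L^{p',1-\theta,1}_{av}(\Omega))^r$, followed by the duality $(L^{p',1-\theta,1}_{av})^*=L^{p,\theta,\infty}_{av}$. That part is fine. Your overall framework for $p\le 1$ — decompose $\arr g$ into atoms, build a uniformly-normed divergence-free extension for each atom, and sum using the $p$-triangle inequality — is also the paper's framework. The gap is entirely in the construction of the per-atom extension $\arr G_j$, and I think your proposed route does not close.

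The paper constructs $\arr G_j$ concretely: it chooses a Lipschitz cylinder $U$ over a portion of $\partial\Omega$ containing $\supp a_j$, maps $U$ bilipschitzly to a ball $\widetilde B$ (with the relevant part of $\partial\Omega$ going to a hemisphere $\widetilde\Delta$), transports $a_j$ to an atom $\tilde a$ on $\partial\widetilde B$, solves the harmonic Neumann problem $\Delta\tilde u = 0$, $\partial_\nu\tilde u=\tilde a$ in $\widetilde B$ using the Besov-space solvability of \cite{MayMit04A} on $C^1$ domains (Lemma~\ref{lem:harmonic:C1}), extends $\tilde u$ across $\partial\widetilde B$ by Kelvin reflection so that $\tilde u$ is harmonic off $\widetilde\Delta$, pulls back via the bilipschitz map and Lemma~\ref{lem:second:change}, and takes $G_\alpha=\1_U(\mat A\nabla u)_i$ when $\alpha=\gamma+\vec e_i$. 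Divergence-freeness and the Neumann identity are automatic because $u$ is a weak solution of $\Div\mat A\nabla u=0$ in $U$, and the $L^{p,\theta,\infty}_{av}$-bound comes from interior estimates for harmonic functions.

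Your alternative plan — run Hahn--Banach at an auxiliary $\tilde p>1$, then localize by cutoff plus divergence corrector — has two gaps that I don't see how to fill in the sketched form. First, a Hahn--Banach extension is merely \emph{some} element of $L^{\tilde p,\theta,\infty}_{av}(\Omega)$ representing the extended functional; it is not a solution of any PDE, so it carries no pointwise or interior regularity. Your assertion that "the pointwise bound with the correct power of $r_j$ will follow from \ldots standard interior regularity of the preliminary extension" is therefore unjustified; and without an $L^\infty$ bound on the Carleson box $T$ of size $r_j$, you cannot pass from the $\tilde p$-weighted estimate to the $p$-weighted one by H\"older, because the conjugate factor produces $\int_T\dist(\cdot,\partial\Omega)^{-1}\,dx=\infty$. (The $\tilde p$-norm of the Hahn--Banach extension also scales like $r_j^{(d-1)(1/\tilde p-1/p)}$, which is unbounded as $r_j\to 0$, so the normalization has to be recovered precisely from the unavailable pointwise bound.) Second, the cutoff-plus-corrector step is nontrivial: multiplying a divergence-free $\arr G$ by a cutoff $\chi$ introduces terms like $\arr G\,\nabla\chi$ whose $m$-fold divergence must be absorbed by a compactly supported corrector; constructing such a corrector with the required weighted $L^{p,\theta,\infty}_{av}$-bound is a higher-order Bogovskii-type problem that you do not address, and the zero-mean condition $\int a_j\,d\sigma=0$ is used in the paper as the Neumann compatibility condition (so the harmonic Neumann problem on $\widetilde B$ is solvable), not as the mechanism that makes a cutoff harmless. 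To close the $p\le 1$ case you need a \emph{concrete} elliptic construction in place of the abstract Hahn--Banach extension; the paper's choice of the harmonic Neumann problem on a ball, together with Kelvin reflection, supplies both the needed pointwise regularity and the automatic localization.
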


The remainder of Section~\ref{sec:extension:neumann} will be devoted to a proof of this theorem.


\subsection{The case \texorpdfstring{$p>1$}{p>1}}
\label{sec:neumann:extension:p>1}
Let $\arr g\in \dot N\!A^p_{\theta-1}(\partial\Omega)$. 
Observe that by Theorem~\ref{thm:trace} and by the duality characterization of $\dot N\!A^p_{\theta-1}(\partial\Omega)$, the operator $T_{\arr g}$, given by
\begin{equation*}T_{\arr g}( \Phi) = \bigl\langle \arr g, \Tr_{m-1} \Phi\bigr\rangle_{\partial\Omega},\end{equation*}
is a well-defined, bounded linear operator on $\dot W_{m,av}^{p',1-\theta,1}(\Omega)$. We may regard the space $\dot W_{m,av}^{p',1-\theta,1}(\Omega)$ as a closed subspace of $(L_{av}^{p',1-\theta,1}(\Omega))^r$, where $r$ is the number of multiindices~$\alpha$ with $\abs{\alpha}=m$. By the Hahn-Banach theorem we may extend $T_{\arr g}$ to a linear operator (of the same norm) on all of $(L_{av}^{p',1-\theta,1}(\Omega))^r$. Because the dual space to $L_{av}^{p',1-\theta,1}(\Omega)$ is $L_{av}^{p,\theta,\infty}(\Omega)$, there is some $\arr G$ with 
\begin{equation*}\doublebar{\arr G}_{L_{av}^{p,\theta,\infty}(\Omega)}\approx \doublebar{\arr g}_{(\dot W\!A^{p'}_{1-\theta}(\partial\Omega))^*}
=\doublebar{\arr g}_{\dot N\!A^{p}_{\theta-1}(\partial\Omega)}
\end{equation*}
that satisfies
\begin{equation*}\bigl\langle \arr G, \nabla^m F\bigr\rangle_{\Omega}
= T_{\arr g}(F)
= \bigl\langle \arr g,\Tr_{m-1} F\bigr\rangle_{\partial\Omega}\end{equation*}
for all $F\in \dot W_{m,av}^{p',1-\theta,1}(\Omega)$.
In particular, if $\Tr_{m-1} F=0$ then $\bigl\langle \arr G, \nabla^m F\bigr\rangle_{\Omega}=0$, and so $\Div_m\arr G=0$. We then have that $\arr g=\M_m^\Omega\arr G$, as desired.

\subsection{The case \texorpdfstring{$p\leq1$}{p<1}}
\label{sec:neumann:extension:p<1}
We now turn to the case $p\leq 1$; recall that in this case $\dot B^{p,p}_{\theta-1}(\partial\Omega)$ receives an atomic characterization. We will use the following two lemmas.

\begin{lem}[{\cite[Theorem~3.2]{MayMit04A}}]\label{lem:harmonic:C1} Let $\Omega$ be bounded $C^1$ domain. If $0<\theta<1$ and $\pdmnMinusOne/(\dmnMinusOne+\theta)< p\leq1$, then the Neumann problem for the Laplacian is well-posed in~$\Omega$ in the sense that, for every $g\in B^{p,p}_{\theta-1}(\partial\Omega)$, there is a unique function $u$ that satisfies
\begin{equation*}\Delta u=0 \text{ in }\Omega,
\quad
\MM_{\mat I}^\Omega u=g \text{ on }\partial\Omega,
\quad
\doublebar{u}_{B^{p,p}_{\theta+1/p}(\Omega)}\leq C \doublebar{g}_{B^{p,p}_{\theta-1}(\partial\Omega)}
.\end{equation*}
\end{lem}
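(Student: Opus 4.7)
The plan is to prove existence, uniqueness, and the estimate separately, using the method of harmonic layer potentials combined with the atomic characterization of $\dot B^{p,p}_{\theta-1}(\partial\Omega)$. Because $p\leq 1$, we cannot argue by duality; instead the atomic decomposition is the natural substitute, and the $C^1$ hypothesis is used precisely because it makes the relevant boundary integral operators compact perturbations of $\pm\tfrac{1}{2}I$.

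For existence and the estimate, I would first write $g=\sum_j \lambda_j\,a_j$, where each $a_j$ is a $\dot B^{p,p}_{\theta-1}(\partial\Omega)$-atom (hence supported in $B(x_j,r_j)\cap\partial\Omega$, $L^\infty$-bounded by $r_j^{\theta-1-(d-1)/p}$, and integrating to zero), with $\sum_j|\lambda_j|^p \lesssim \|g\|_{B^{p,p}_{\theta-1}(\partial\Omega)}^p$. The Fabes--Jodeit--Rivi\`ere theory gives that on a bounded $C^1$ domain the operator $-\tfrac{1}{2}I+K^*$ is Fredholm of index zero on $L^2(\partial\Omega)$ and is invertible on the mean-zero subspace; by the atomic Hardy space theory of Coifman--Weiss and the atomic theory of Dahlberg--Kenig for $C^1$ surfaces, this inversion extends to mean-zero atomic spaces. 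Thus one can solve $(-\tfrac{1}{2}I+K^*)f_j=a_j$ with $f_j$ being (a bounded multiple of) an atom with the same localization and cancellation as $a_j$. Setting $u_j=\mathcal{S}f_j$, the classical single layer potential, gives a harmonic function in $\Omega$ with $\M_{\mat I}^\Omega u_j=a_j$ and whose $B^{p,p}_{\theta+1/p}(\Omega)$-norm can be bounded by a constant independent of $j$: near $\supp a_j$ one uses direct size estimates on the Newtonian potential, while away from $\supp a_j$ the cancellation $\int a_j\,d\sigma=0$ yields a gain of one extra order of decay, sufficient to integrate against the Besov weight. Setting $u=\sum_j\lambda_j u_j$ and applying the $p$-triangle inequality in $B^{p,p}_{\theta+1/p}(\Omega)$ (valid because $p\leq 1$) then yields the desired estimate, and by continuity of the Neumann trace operator $\M_{\mat I}^\Omega u=g$.

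For uniqueness, suppose $u$ is harmonic in $\Omega$, $u\in B^{p,p}_{\theta+1/p}(\Omega)$, and $\M_{\mat I}^\Omega u=0$. Interior regularity for harmonic functions together with the Besov control near $\partial\Omega$ makes the Green identity $\int_\Omega|\nabla u|^2=\langle\Tr u,\M_{\mat I}^\Omega u\rangle_{\partial\Omega}$ meaningful (after a limiting argument over a sequence of interior approximating $C^1$ subdomains with uniformly nontangential traces), and the vanishing Neumann data forces $\nabla u\equiv 0$, i.e.\ $u$ is constant. Since $B^{p,p}_{\theta+1/p}(\Omega)$ identifies functions differing by a constant (or we work on the quotient in the solvability formulation), this gives uniqueness.

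The main obstacle will be the inversion of $-\tfrac{1}{2}I+K^*$ on the atomic space adapted to $\dot B^{p,p}_{\theta-1}(\partial\Omega)$ when $p\leq 1$. On $L^2$ this follows from the $C^1$ hypothesis and the compactness of $K^*$, but extending the bounded invertibility down to the Hardy/Besov atomic scale requires careful kernel estimates for $K^*$ together with a molecular reproducing argument showing that $(-\tfrac{1}{2}I+K^*)^{-1}$ maps atoms to bounded linear combinations of molecules of the same type. Once that functional-analytic step is in hand, the single-layer mapping properties into $B^{p,p}_{\theta+1/p}(\Omega)$ are relatively standard kernel estimates, and the remainder of the argument proceeds by summation.
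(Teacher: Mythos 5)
This lemma is not proved in the paper at all: it is quoted verbatim from \cite[Theorem~3.2]{MayMit04A}, so there is no internal argument to compare against, and your sketch has to stand on its own. Its overall plan (atomic decomposition of $g$, single layer potential with density obtained by inverting $-\tfrac12 I+K^{*}$, compactness of $K^{*}$ coming from the $C^1$ hypothesis) is indeed the strategy of the cited literature, but the two load-bearing steps are precisely the ones you either leave open or get wrong. First, the claim that one can solve $(-\tfrac12 I+K^{*})f_j=a_j$ with $f_j$ ``an atom with the same localization and cancellation as $a_j$'' is false: the inverse operator is nonlocal, so $f_j$ is not supported near $\supp a_j$; at best atoms go to molecules, and one must prove bounded invertibility on the whole space. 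Moreover the space in play is the negative-smoothness Besov space $B^{p,p}_{\theta-1}(\partial\Omega)$, not a Hardy space: Fabes--Jodeit--Rivi\`ere on $L^2$ together with Coifman--Weiss/Dahlberg--Kenig $H^p$ theory does not by itself give invertibility of $-\tfrac12 I+K^{*}$ on $B^{p,p}_{\theta-1}(\partial\Omega)$ for $\pmin<p\leq 1$, nor the mapping property $\mathcal S:B^{p,p}_{\theta-1}(\partial\Omega)\to B^{p,p}_{\theta+1/p}(\Omega)$ in this range; establishing these (via molecular estimates or interpolation of endpoint results on the Besov scale) \emph{is} the content of the cited theorem, and you explicitly defer it. So what you have is a program, not a proof of existence and the estimate.

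Second, the uniqueness argument does not go through as described. For $p\leq 1$, a harmonic $u\in B^{p,p}_{\theta+1/p}(\Omega)$ carries only a weighted bound of the type $\int_\Omega \abs{\nabla u}^p\dist(x,\partial\Omega)^{p-1-p\theta}\,dx<\infty$ (this is the Jerison--Kenig equivalence the paper invokes right after the lemma), which is compatible with $\abs{\nabla u(x)}$ blowing up like $\dist(x,\partial\Omega)^{\theta-1+\varepsilon}$ along the boundary; then $\abs{\nabla u}^2$ is integrable only when $\theta$ is large (roughly $\theta>1/2$), and indeed $B^{p,p}_{\theta+1/p}(\Omega)$ does not embed into $W^{1,2}(\Omega)$ anywhere in the stated range of $(p,\theta)$. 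Hence $\int_\Omega\abs{\nabla u}^2$ may be infinite a priori, and the Green identity cannot simply be ``made meaningful'' by exhausting $\Omega$ with interior subdomains; note also that for $p\leq 1$ the pairing $\langle \Trace u,\MM_{\mat I}^\Omega u\rangle_{\partial\Omega}$ is not a duality pairing, so passing to the limit in the boundary term needs exactly the quantitative control you do not have. Uniqueness in this range is obtained in the source by representation-formula/layer-potential arguments rather than by an energy identity. (A smaller point, inherited from the statement: for the pure Neumann problem one needs the compatibility condition $\langle g,1\rangle_{\partial\Omega}=0$ and uniqueness modulo constants, and your atomic decomposition should be for the inhomogeneous space $B^{p,p}_{\theta-1}(\partial\Omega)$, whose atoms at the top scale need not have vanishing mean; this bookkeeping should be tracked rather than assumed.)
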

Notice that because $\Delta$ is a second-order operator, $\MM_{\mat I}^\Omega u$ is a single function rather than an array; if $\nabla u$ is continuous up to the boundary then we have an explicit formula $\MM_{\mat I}^\Omega u=\nu\cdot \nabla u$, where $\nu$ is the unit outward normal vector.

The norm $\doublebar{u}_{B^{p,p}_{\theta+1/p}(\Omega)}$ is different from the norms we prefer to use in this paper. However, using the atomic decomposition of ${B^{p,p}_{\theta+1/p}(\Omega)}$ (see \cite{FraJ85}), it is straightforward to establish that if $p<1$ then 
\begin{equation*}\doublebar{\nabla u}_{L_{av}^{p,\theta,1}(\Omega)}
\leq C \doublebar{u}_{B^{p,p}_{\theta+1/p}(\Omega)}.\end{equation*}
Because $u$ is harmonic, we have that $\doublebar{\nabla u}_{L_{av}^{p,\theta,\infty}(\Omega)}\leq C \doublebar{\nabla u}_{L_{av}^{p,\theta,1}(\Omega)}$, and so we may replace the $B^{p,p}_{\theta+1/p}(\Omega)$-norm in Lemma~\ref{lem:harmonic:C1} by a $\dot W_{m,av}^{p,\theta,\infty}(\Omega)$-norm. (If $u$ is harmonic then the ${B^{p,p}_{\theta+1/p}(\Omega)}$-norm is equivalent to the $\dot W_{1,av}^{p,\theta,\infty}(\Omega)$-norm for $p\geq 1$ as well; see  \cite[Theorem~4.1]{JerK95}.)


The second lemma we will require is well known in the theory of second-order divergence-form elliptic equations and may be verified using elementary multivariable calculus.

\begin{lem}\label{lem:second:change}
Let $\Psi:\Omega\mapsto V$ be any bilipschitz change of variables and let $\mat J_\Psi$ be the Jacobean matrix, so $\nabla ( u\circ \Psi)=\mat J_\Psi^T \, (\nabla  u)\circ\Psi$. Let $\mat A$ be a matrix-valued function.
%
Let $\widetilde{\mat A}$ be such that
\begin{align*}
\mat J_\Psi\,\widetilde {\mat A}\,\mat J_\Psi^T&=\abs{\mat J_\Psi}\, (\mat A\circ\Psi)
\end{align*}
where $\abs{\mat J_\Psi}$ denotes the determinant of the matrix.

Let $u\in W^2_{1}(V)$ and let $\varphi\in W^2_{1}(V)$.
Then
\begin{equation*}\int_\Omega \nabla\tilde \varphi \cdot \widetilde {\mat A}\nabla\tilde u = \int_V \nabla\varphi\cdot {\mat A} \nabla u \end{equation*}
where $\tilde u=u\circ\Psi$ and $\tilde \varphi=\varphi\circ\Psi$. In particular, $\Div {\mat A}\nabla u=0$ in $V$ if and only if $\Div {\widetilde {\mat A}}\nabla \tilde u=0$ in~$\Omega$, and the conormal derivative $\nu\cdot \widetilde {\mat A}\nabla \tilde u = \MM_{{\widetilde {\mat A}}}^\Omega \tilde u$ is zero on some $\Delta\subset\partial\Omega$ if and only if $\nu\cdot \mat A\nabla u = \MM_{\mat A}^V u$ is zero on $\Psi(\Delta)\subset\partial V$.
\end{lem}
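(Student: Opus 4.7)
The plan is a direct change-of-variables computation. Starting from $\int_V \nabla\varphi\cdot\mat A\nabla u$, substitute $x = \Psi(y)$, so that $dx = \abs{\mat J_\Psi(y)}\,dy$; note that $\mat J_\Psi$ exists almost everywhere by Rademacher's theorem and $\abs{\mat J_\Psi}$ is bounded above and below since $\Psi$ is bilipschitz. The chain-rule identity $\nabla(w\circ\Psi) = \mat J_\Psi^T\,(\nabla w)\circ\Psi$ inverts (pointwise a.e.) to $(\nabla u)\circ\Psi = \mat J_\Psi^{-T}\nabla\tilde u$ and similarly for~$\varphi$.

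Substituting into the integrand and moving the matrices across the inner product via $(\mat J_\Psi^{-T})^T = \mat J_\Psi^{-1}$, one obtains
\begin{equation*}
\int_V \nabla\varphi\cdot\mat A\nabla u
= \int_\Omega \nabla\tilde\varphi\cdot\bigl[\,\abs{\mat J_\Psi}\,\mat J_\Psi^{-1}(\mat A\circ\Psi)\mat J_\Psi^{-T}\,\bigr]\,\nabla\tilde u.
\end{equation*}
The bracketed matrix equals $\widetilde{\mat A}$: left-multiplying the defining relation $\mat J_\Psi\widetilde{\mat A}\mat J_\Psi^T = \abs{\mat J_\Psi}(\mat A\circ\Psi)$ by $\mat J_\Psi^{-1}$ and right-multiplying by $\mat J_\Psi^{-T}$ yields $\widetilde{\mat A} = \abs{\mat J_\Psi}\,\mat J_\Psi^{-1}(\mat A\circ\Psi)\mat J_\Psi^{-T}$. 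This establishes the integral identity.

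For the two consequences, observe that the pullback $\varphi\mapsto\tilde\varphi=\varphi\circ\Psi$ is a bijection between Lipschitz functions compactly supported in~$V$ and those compactly supported in~$\Omega$, and these suffice as test functions for the weak formulation of $\Div\mat A\nabla u = 0$. Hence vanishing of $\int_V\nabla\varphi\cdot\mat A\nabla u$ for every such~$\varphi$ is equivalent to vanishing of $\int_\Omega\nabla\tilde\varphi\cdot\widetilde{\mat A}\nabla\tilde u$ for every~$\tilde\varphi$, which is precisely the equivalence $\Div\mat A\nabla u = 0$ in~$V$ iff $\Div\widetilde{\mat A}\nabla\tilde u = 0$ in~$\Omega$. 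For the conormal derivative, $\MM_{\mat A}^V u$ is defined as the distribution on~$\partial V$ satisfying $\langle \Trace\varphi,\MM_{\mat A}^V u\rangle_{\partial V}=\int_V\nabla\varphi\cdot\mat A\nabla u$ for test~$\varphi$ (cf.~\eqref{eqn:neumann:1}), and analogously for $\MM_{\widetilde{\mat A}}^\Omega\tilde u$; applying the integral identity to test functions~$\varphi$ supported near a point of~$\Psi(\Delta)$ (whose pullbacks are supported near the corresponding point of~$\Delta$) shows that one conormal annihilates all boundary test functions supported in~$\Psi(\Delta)$ precisely when the other annihilates those supported in~$\Delta$.

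The computation is essentially linear algebra combined with Rademacher's theorem; the only care required is with the almost-everywhere differentiability of~$\Psi$ and with the fact that the bilipschitz pullback sets up a bijection between the appropriate spaces of boundary test functions, both of which are routine for bilipschitz maps, so no serious obstacle arises beyond careful bookkeeping.
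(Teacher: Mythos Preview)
Your proof is correct and is precisely the argument the paper has in mind: the paper does not write out a proof of this lemma at all, stating only that it ``is well known in the theory of second-order divergence-form elliptic equations and may be verified using elementary multivariable calculus.'' Your change-of-variables computation is exactly that verification.
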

One may use Lemma~\ref{lem:second:change} to relate the conormal derivatives of $u$ and~$\tilde u$ even when they are not zero.

Let $a$ be a $\dot B^{p,p}_{\theta-1}(\partial\Omega)$-atom, supported in the surface ball $B(x_0,r)\cap\partial\Omega$. Our goal is to construct the Neumann extension of~$a$.
If $\partial \Omega$ is compact, then we may assume that $r$ is small enough that $B(x_0,4r)\subset B(x_j,2r_j)$ for one of the points $x_j$ of Definition~\ref{dfn:domain}. Let $V=V_j$ be the associated Lipschitz graph domain of Definition~\ref{dfn:domain}. (If $\partial\Omega$ is not compact then $\Omega$ is itself a Lipschitz graph domain; let $V=\Omega$.)

It suffices to show that, for all such atoms~$a$, and for all $\gamma$ with $\abs{\gamma}=m-1$, there exists some $\arr G\in L_{av}^{p,\theta,q}(\Omega)$, with norm at most~$C$, such that $\Div_m \arr G=0$ in $\Omega$ and such that if $F\in \dot W^2_{m,loc}(\overline\Omega)$,
\begin{equation*}\langle \arr G, \nabla^m  F\rangle_\Omega = \langle a, \partial^\gamma F\rangle_{\partial\Omega}. \end{equation*}

Now, observe that there is some Lipschitz function $\psi$ and some coordinate system such that $V=\{(x',t):t>\psi(x')\}$. Let $U$ be the Lipschitz cylinder given by
\begin{equation*}U=\{(x',t): \abs{x'-x_0'}<2r,\>\psi(x')<t<\psi(x')+r\}.\end{equation*}
Let $\Delta=\partial V\cap\partial U$.
Notice that $a$ is supported in~$\Delta$, and so we may extend $a$ by zero to a $\dot B^{p,p}_{\theta-1}(\partial U)$-atom.

Let $\widetilde B$ be the ball in $\R^d$ of radius~$r$ centered at the origin; then there is some bilipschitz change of variables $\Psi:\overline{\widetilde B}\mapsto \overline U$ with $\doublebar{\nabla \Psi}_{L^\infty}+ \doublebar{\nabla (\Psi^{-1})}_{L^\infty}\leq C$, where $C$ depends only on the Lipschitz character of $\Omega$. We may choose $\Psi$ such that $\widetilde\Delta=\Psi^{-1}(\Delta)$ is a hemisphere.

Let $\tilde a$ be the function defined on $\partial\widetilde B$ that satisfies
\begin{equation*}\int_{\partial\widetilde B} \varphi(\Psi(x))\,\tilde a(x)\,d\sigma(x)
= \int_{\partial U} \varphi(x)\,a(x)\,dx\end{equation*}
for all smooth, compactly supported test functions~$\varphi$; notice $a(\Psi(x))=\tilde a(x)\,\omega(x)$ for some real-valued function $\omega$ that is bounded above and below. In particular, $\abs{\tilde a(x)}\leq C\doublebar{a}_{L^\infty(\partial\Omega)}\leq Cr^{\theta-1/p\pdmnMinusOne}$ and $\int_{\partial\widetilde B} \tilde a(x)\,d\sigma(x)=0$; thus $\tilde a$ is a (bounded multiple of a) $\dot B^{p,p}_{\theta-1}(\partial\widetilde B)$-atom.

By Lemma~\ref{lem:harmonic:C1}, we have that there is some harmonic function $\tilde u$ with $\nu\cdot\nabla\tilde u=\tilde a$ on $\partial\widetilde B$; by the remarks following that lemma, we have that $\nabla\tilde u \in L_{av}^{p,\theta,\infty}(\widetilde B)$. Because $p\leq 1$ and $\dist(x,\widetilde\Delta)\geq \dist(x,\partial\widetilde B)$ for any $x\in\widetilde B$, we have that
\begin{equation*}\int_{\widetilde B} \sup_{B(x,\dist(x,\partial\widetilde B)/2)} \abs{\nabla \tilde u}^p\dist(x,\widetilde\Delta)^{p(1-\theta)-1}\,dx\leq C\doublebar{\nabla\tilde u}_{L_{av}^{p,\theta,\infty}(\widetilde B)}=C.\end{equation*}

Now, we extend $\tilde u$ to a function defined on all of $\R^\dmn$ by letting $\tilde u(x)=\tilde u(r^2\abs{x}^{-2}x)$ for all $x\notin \widetilde B$. It is straightforward to establish that $\tilde u$ is then harmonic away from $\supp \tilde a\subseteq\widetilde\Delta$. Using Lemma~\ref{lem:L:L1} and standard pointwise bounds on harmonic functions, we may show that 
\begin{equation*}\int_{\widetilde B} \sup_{B(x,\dist(x,\widetilde\Delta)/2)} \abs{\nabla \tilde u}^p\dist(x,\widetilde\Delta)^{p-1-p\theta}\,dx\leq C.\end{equation*}

Let $u(\Psi(x))=\tilde u(x)$. We will construct $\arr G$ from $\1_U\nabla u$. Thus we must estimate $\nabla u$. Notice that
\begin{equation*}\int_{U} \sup_{B(x,\dist(x,\Delta)/C)} \abs{\nabla  u}^p\dist(x,\Delta)^{p-1-p\theta}\,dx\leq C.\end{equation*}
But if $x\in U$ then $\dist(x,\Delta)\approx\dist(x,\partial\Omega)$ and so
\begin{equation*}\int_\Omega \sup_{B(x,\dist(x,\partial\Omega)/C)} \1_U\abs{\nabla  u}^p\dist(x,\partial\Omega)^{p-1-p\theta}\,dx\leq C.\end{equation*}
Thus $\1_U\nabla u\in L^{p,\theta,\infty}_{av}(\Omega)$.

We now consider the Neumann boundary values of~$u$.
By Lemma~\ref{lem:second:change}, there is a bounded matrix $\mat A$ such that 
\begin{equation*}\int_U \nabla\varphi \cdot \mat A\nabla u = \int_{\widetilde B} \nabla \tilde\varphi\cdot \nabla \tilde u\end{equation*}
for all smooth, compactly supported functions~$\varphi$.
But by the definition of conormal derivative,
\begin{equation*}\int_{\widetilde B} \nabla \tilde\varphi\cdot \nabla \tilde u = \int_{\partial\widetilde B} \tilde \varphi\, \tilde a \,d\sigma\end{equation*}
and by definition of~$\tilde a$,
\begin{equation*}\int_{\partial\widetilde B} \tilde \varphi\, \tilde a \,d\sigma = \int_{\partial U}  \varphi\,  a \,d\sigma.\end{equation*}
Recall that we chose a multiindex $\gamma$ with $\abs{\gamma}=m-1$. If $\abs{\alpha}=m$ and $\alpha>\gamma$, then there is some coordinate vector $\vec e_i$ with $1\leq i\leq \dmn$ and with $\alpha=\gamma+\vec e_i$; let $G_{\alpha} = \1_U (A\nabla u)_{i}$. If $\abs{\alpha}=m$ and $\alpha\not>\gamma$, let $G_{\alpha}=0$.

Then for any smooth, compactly supported function~$F$,
\begin{align*}
\langle \arr G, \nabla^m  F\rangle_\Omega
&=
	\int_\Omega \overline{\1_U\mat A\nabla u}\cdot \nabla \partial^\gamma F
=
	\int_U \overline{\mat A\nabla u}\cdot \nabla \partial^\gamma F
=
	\int_{\partial U} \overline a \,\partial^\gamma F\,d\sigma
\end{align*}
as desired.

\section{Traces: Neumann boundary data}
\label{sec:trace:neumann}

In the previous section, we established that
\begin{equation*}\dot N\!A^p_{\theta-1}(\partial\Omega) \subseteq \{\M_m^\Omega\arr G:\arr G\in L_{av}^{p,\theta,q}(\Omega),\allowbreak\>\Div_m\arr G=0\}.\end{equation*}
We conclude our study of Dirichlet and Neumann boundary values by establishing that, in certain special cases, the reverse inclusion is valid. Specifically, we will establish the reverse inclusion in the case $p>1$ (Theorem~\ref{thm:trace:Neumann}), in the case $\Omega=\R^\dmn_+$ (Theorem~\ref{thm:trace:Neumann:halfspace}), and in the case where $m=1$ and $\Omega$ is a Lipschitz graph domain (Corollary~\ref{cor:trace:Neumann:2}).

We conjecture that the reverse inclusion is true even in the case $m\geq 2$, $\pmin<p\leq 1$ and for $\Omega\neq\R^\dmn_+$ an arbitrary Lipschitz domain with connected boundary.

\begin{thm}\label{thm:trace:Neumann}
Suppose that $0<\theta<1$, that $1<p\leq\infty$, and that $1\leq q \leq \infty$.
Let $\Omega$ be a Lipschitz domain with connected boundary.

If $\arr G\in L_{av}^{p,\theta,q}(\Omega)$ and $\Div_m\arr G=0$ in~$\Omega$, then $\M_m^\Omega\arr G\in \dot N\!A^p_{\theta-1}(\partial\Omega)$. 

\end{thm}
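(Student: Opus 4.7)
My approach is duality. By Remark~\ref{rmk:neumann-whitney:p>1}, the hypothesis $p>1$ yields the identification $\dot N\!A^p_{\theta-1}(\partial\Omega)=(\dot W\!A^{p'}_{1-\theta}(\partial\Omega))^*$, so it suffices to produce a bounded linear functional on $\dot W\!A^{p'}_{1-\theta}(\partial\Omega)$ that represents $\M_m^\Omega\arr G$, with operator norm controlled by $\doublebar{\arr G}_{L^{p,\theta,q}_{av}(\Omega)}$. Once that functional is in hand, Hahn--Banach extension to $(\dot B^{p',p'}_{1-\theta}(\partial\Omega))^r$ (whose dual is $(\dot B^{p,p}_{\theta-1}(\partial\Omega))^r$ by Definition~\ref{dfn:besov}) produces a representative $\arr g \in (\dot B^{p,p}_{\theta-1}(\partial\Omega))^r$ of the equivalence class $\M_m^\Omega\arr G$ with $\doublebar{\arr g}_{(\dot B^{p,p}_{\theta-1})^r} \leq C\doublebar{\arr G}_{L^{p,\theta,q}_{av}}$, as required.

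Given $\arr\varphi \in \dot W\!A^{p'}_{1-\theta}(\partial\Omega)$, Theorem~\ref{thm:extension} produces a Dirichlet extension $\Phi \in \dot W_{m,av}^{p',1-\theta,\infty}(\Omega)$ with $\Tr_{m-1}^\Omega \Phi = \arr\varphi$ and $\doublebar{\Phi}_{\dot W_{m,av}^{p',1-\theta,\infty}(\Omega)} \leq C\doublebar{\arr\varphi}_{\dot W\!A^{p'}_{1-\theta}(\partial\Omega)}$. I would define the candidate functional by
\begin{equation*}
T_{\arr G}(\arr\varphi) := \langle \nabla^m \Phi, \arr G\rangle_\Omega.
\end{equation*}
Using the duality $(L^{p,\theta,q}_{av}(\Omega))^* = L^{p',1-\theta,q'}_{av}(\Omega)$ recorded after~\eqref{eqn:L:norm:whitney} together with the elementary monotonicity $\doublebar{H}_{L^{p',1-\theta,q'}_{av}} \leq \doublebar{H}_{L^{p',1-\theta,\infty}_{av}}$ (valid since the inner $L^{q'}$ average is dominated by the essential supremum), H\"older's inequality yields $|T_{\arr G}(\arr\varphi)| \leq C\doublebar{\arr G}_{L^{p,\theta,q}_{av}}\doublebar{\arr\varphi}_{\dot W\!A^{p'}_{1-\theta}}$, which is the required boundedness estimate.

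The crux is to verify well-definedness: if $\Phi_1$ and $\Phi_2$ both extend $\arr\varphi$, then $\Psi:=\Phi_1-\Phi_2$ lies in $\dot W_{m,av}^{p',1-\theta,\infty}(\Omega)$ with zero Dirichlet trace, and I must show $\langle \nabla^m \Psi, \arr G\rangle_\Omega = 0$. The second part of Theorem~\ref{thm:smooth:dense} supplies a sequence $\psi_n$ (smooth and compactly supported in $\Omega$ when $\Omega$ is bounded or a Lipschitz graph domain, or the analogous exterior-case sequence when $\R^\dmn\setminus\Omega$ is bounded) approximating $\Psi$ in the appropriate averaged norm; since $\Div_m\arr G=0$ gives $\langle \nabla^m\psi_n,\arr G\rangle_\Omega = 0$ for each $n$, the H\"older pairing passes to the limit to yield $\langle\nabla^m\Psi,\arr G\rangle_\Omega = 0$. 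Taking $\Phi = \psi$ in the special case $\arr\varphi = \Tr_{m-1}^\Omega\psi$ for $\psi \in C^\infty_0(\R^\dmn)$, the defining formula~\eqref{dfn:neumann} then shows $T_{\arr G}$ coincides with the action of $\M_m^\Omega\arr G$ on its natural domain, so $T_{\arr G}$ genuinely represents the equivalence class.

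The principal technical obstacle is the density step when $q=1$. For $q>1$ the approximation takes place in $\dot W_{m,av}^{p',1-\theta,q'}$ with $q'<\infty$, where Theorem~\ref{thm:smooth:dense} applies directly and H\"older matches the available $L^{p,\theta,q}_{av}$-norm of $\arr G$; for the endpoint $q=1$, one has $q'=\infty$ and Theorem~\ref{thm:smooth:dense} does not immediately supply strong convergence in $\dot W_{m,av}^{p',1-\theta,\infty}$, so the argument must be supplemented either by a weak-$\ast$ density argument exploiting $L^{p',1-\theta,\infty}_{av}=(L^{p,\theta,1}_{av})^*$, or by a boundary cutoff using Lemma~\ref{lem:Poincare:L} to control the zero-trace piece near $\partial\Omega$ and thereby reduce to a compactly supported regime covered by the density theorem.
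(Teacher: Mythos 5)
Your strategy coincides with the paper's: by Remark~\ref{rmk:neumann-whitney:p>1} one identifies $\dot N\!A^p_{\theta-1}(\partial\Omega)=(\dot W\!A^{p'}_{1-\theta}(\partial\Omega))^*$, extends $\arr\varphi$ via Theorem~\ref{thm:extension} to $\Phi\in\dot W_{m,av}^{p',1-\theta,\infty}(\Omega)$, and bounds $\langle\nabla^m\Phi,\arr G\rangle_\Omega$ by H\"older against $\doublebar{\arr G}_{L_{av}^{p,\theta,1}(\Omega)}\leq\doublebar{\arr G}_{L_{av}^{p,\theta,q}(\Omega)}$. Where you go beyond the paper is in making explicit the well-definedness of $T_{\arr G}$ — that is, the independence of $\langle\nabla^m\Phi,\arr G\rangle_\Omega$ from the choice of extension $\Phi$ — which the paper silently asserts by writing $\langle\arr\varphi,\M_m^\Omega\arr G\rangle_{\partial\Omega}=\langle\nabla^m\Phi,\arr G\rangle_\Omega$ without justification; and you are right that because the paper reduces to $q=1$ (so $q'=\infty$), Theorem~\ref{thm:smooth:dense} as stated (which requires $q<\infty$) does not directly yield the needed density of $C^\infty_0(\Omega)$ in the zero-trace space, so a supplementary weak-$\ast$ argument or a Lemma~\ref{lem:Poincare:L}--based boundary cutoff is genuinely required at that endpoint. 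Both of your proposed remedies are sound, so your proof is correct and is, if anything, more careful than the paper's on these two points.
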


\begin{proof}
Choose some $\arr G\in L_{av}^{p,\theta,1}(\Omega)$. 

Recall that $\dot N\!A^p_{\theta-1}(\partial\Omega)$ is the dual space to $\dot W\!A^{p'}_{1-\theta}(\partial\Omega)$. Let $\arr \varphi\in \dot W\!A^{p'}_{1-\theta}(\partial\Omega)$; then by Theorem~\ref{thm:extension} there is some $\Phi\in \dot W_{m,av}^{p',1-\theta,\infty}(\Omega)$ with $\Tr_{m-1}^\Omega\Phi=\arr\varphi$.

We then have that
\begin{align*}\langle \arr\varphi, \M_m^\Omega\arr G\rangle_{\partial\Omega}
&= \langle \nabla^m\Phi, \arr G\rangle_\Omega
\leq C\doublebar{\nabla^m\Phi}_{L_{av}^{p',1-\theta,\infty}(\Omega)} \doublebar{\arr G}_{L_{av}^{p,\theta,1}(\Omega)}
\\&\leq 
	C \doublebar{\arr\varphi}_{\dot W\!A^{p'}_{1-\theta}(\partial\Omega)} \doublebar{\arr G}_{L_{av}^{p,\theta,1}(\Omega)}
.\end{align*}
Thus, 
$\M_m^\Omega\arr G\in \dot N\!A^p_{\theta-1}(\partial\Omega)$ with $\doublebar{\M_m^\Omega\arr G}_{\dot N\!A^p_{\theta-1}(\partial\Omega)}\leq C\doublebar{\arr G}_{L_{av}^{p,\theta,1}(\Omega)}$, as desired.
\end{proof}

\begin{thm}\label{thm:trace:Neumann:halfspace}
Suppose that $0<\theta<1$, that $\pdmnMinusOne/(\dmnMinusOne+\theta)<p\leq 1$, and that $1<q\leq\infty$.

If $\arr G\in L_{av}^{p,\theta,q}(\R^\dmn_+)$ and $\Div_m\arr G=0$ in~$\R^\dmn_+$, then $\M_m^{\smash{\R^\dmn_+}\vphantom{\R}}\arr G\in \dot N\!A^p_{\theta-1}(\R^\dmnMinusOne)$. 

\end{thm}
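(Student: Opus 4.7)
The plan is to construct a representative of $\M_m^{\smash{\R^\dmn_+}\vphantom{\R}}\arr G$ in $(\dot B^{p,p}_{\theta-1}(\R^\dmnMinusOne))^r$ as a convergent atomic sum, reversing the atomic construction in the proof of Theorem~\ref{thm:extension:Neumann}. The appropriate scale is set by Lemma~\ref{lem:L:L1}: in the half-space,
\[\|\arr G\|^p_{L^{p,\theta,q}_{av}(\R^\dmn_+)}\approx\sum_{Q\in\mathcal{H}}\Bigl(\int_{T(Q)}|\arr G|\Bigr)^p\ell(Q)^{\dmnMinusOne-p\theta-p\pdmnMinusOne},\]
so each dyadic tent $T(Q)$ should produce one atomic array $\arr\alpha_Q$ of order $m-1$ supported in a fixed dilate $cQ\subset\R^\dmnMinusOne$, with coefficient $|\lambda_Q|\leq C\,\ell(Q)^{\pdmnMinusOne/p-\theta-\pdmnMinusOne}\int_{T^*(Q)}|\arr G|$, so that $\sum_Q|\lambda_Q|^p\leq C\|\arr G\|^p_{L^{p,\theta,q}_{av}}$.

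I would fix a smooth partition of unity $\{\eta_Q\}_{Q\in\mathcal{H}}$ on $\overline{\R^\dmn_+}$ subordinate to slightly enlarged tents $T^*(Q)$, with $|\nabla^k\eta_Q|\leq C\ell(Q)^{-k}$ for $0\leq k\leq m$, and for each $Q$ define the local functional
\[F_Q(\varphi):=\int_{\R^\dmn_+}\nabla^m(\eta_Q\varphi)\cdot\arr G\,dx,\qquad\varphi\in C^\infty_0(\R^\dmn).\]
Because $\Div_m\arr G=0$ in $\R^\dmn_+$, $F_Q(\varphi)$ depends only on $\Tr_{m-1}(\eta_Q\varphi)$, whose components are supported in $cQ\subset\partial\R^\dmn_+$. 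Expanding $\nabla^m(\eta_Q\varphi)$ via Leibniz, applying H\"older's inequality on $T^*(Q)$, and subtracting from $\varphi$ its order-$(m-1)$ Taylor polynomial centered at $Q$ so as to exploit the H\"older smoothness of $\Tr_{m-1}\varphi$ (as in the argument leading to the bound~\eqref{eqn:extension:bound}), one identifies $F_Q$ with a pairing against an atomic array $\lambda_Q\arr\alpha_Q$, where each component $\alpha_{Q,\gamma}$ is a $\dot B^{p,p}_{\theta-1}(\R^\dmnMinusOne)$-atom (supported in $cQ$, $L^\infty$-normalized to $\ell(Q)^{\theta-1-\pdmnMinusOne/p}$, with vanishing mean) and $|\lambda_Q|$ is controlled as above.

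Summing, by Lemma~\ref{lem:L:L1} and the bounded overlap of the $T^*(Q)$,
\[\sum_Q|\lambda_Q|^p\leq C\sum_Q\Bigl(\int_{T^*(Q)}|\arr G|\Bigr)^p\ell(Q)^{\dmnMinusOne-p\theta-p\pdmnMinusOne}\leq C\|\arr G\|^p_{L^{p,\theta,q}_{av}(\R^\dmn_+)},\]
so $\arr g:=\sum_Q\lambda_Q\arr\alpha_Q$ converges in $(\dot B^{p,p}_{\theta-1}(\R^\dmnMinusOne))^r$ with the claimed norm estimate. To conclude $\arr g\in\M_m^{\smash{\R^\dmn_+}\vphantom{\R}}\arr G$, I would verify that for any $\varphi\in C^\infty_0(\R^\dmn)$ one has $\langle\Tr_{m-1}\varphi,\arr g\rangle_{\partial\R^\dmn_+}=\sum_QF_Q(\varphi)=\int_{\R^\dmn_+}\nabla^m\varphi\cdot\arr G$, where the first equality uses the defining identity for each $\arr\alpha_Q$ and the second uses $\sum_Q\eta_Q\equiv1$ on $\overline{\R^\dmn_+}$.

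The main obstacle is obtaining the sharp $\ell(Q)^\theta$ gain in the atomic normalization. A direct Leibniz plus H\"older estimate on $F_Q$ yields only $\ell(Q)^0$, so the missing factor of $\ell(Q)^\theta$ must be extracted from a careful polynomial-subtraction argument that uses the H\"older regularity of $\Tr_{m-1}\varphi$; one then has to check that the subtracted polynomial pieces sit in the trivial part of the $\dot N\!A^p_{\theta-1}$ equivalence class, which is the step that uses the flat geometry of $\R^\dmn_+$ through explicit vertical integration by parts, and is precisely what prevents the argument from extending to general Lipschitz domains when $p\leq 1$.
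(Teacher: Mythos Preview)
Your approach is genuinely different from the paper's, and the central step---``one identifies $F_Q$ with a pairing against an atomic array $\lambda_Q\arr\alpha_Q$''---is a real gap rather than a routine detail.

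The paper never attempts a spatial atomic decomposition of $\M_m^{\smash{\R^\dmn_+}\vphantom{\R}}\arr G$. Instead it uses the product structure of the half-space to write $\langle\nabla^m\varphi,\arr G\rangle_{\R^\dmn_+}=\sum_{j=0}^{m-1}\langle\partial_\dmn^j\varphi\vert_{t=0},M_j\arr G\rangle_{\R^\dmnMinusOne}$, where each $M_j\arr G$ is a \emph{well-defined distribution}, not an equivalence class. It then tests $M_j\arr G$ against Daubechies wavelets $\Psi_Q^\ell$: choosing $\varphi(x,t)=\Psi_Q^\ell(x)\,t^j\eta(t)/j!$ and estimating directly gives $\abs{\langle\Psi_Q^\ell,M_j\arr G\rangle}\leq C\ell(Q)^{j-m-\pdmnMinusOne/2}\int_{T(CQ)}\abs{\arr G}$, and the wavelet characterization~\eqref{eqn:besov:wavelet:norm} together with Lemma~\ref{lem:L:L1} yields $M_j\arr G\in\dot B^{p,p}_{\theta+j-m}(\R^\dmnMinusOne)$. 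Finally, the representative in $\dot B^{p,p}_{\theta-1}$ is built by setting $g_\gamma=\frac{(m-1-\gamma_\perp)!}{\gamma_\parallel!}\partial^{\gamma_\parallel}(-\Delta)^{\gamma_\perp-m+1}M_{\gamma_\perp}\arr G$ and using that $(-\Delta)^k$ is an isomorphism on the Besov scale. Both the vanishing moments and the passage from a functional bound to actual Besov membership are handled by the wavelet machinery, not by hand.

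In your scheme, the problem is twofold. First, even if you prove a bound of the form $\abs{F_Q(\varphi)}\leq C\lambda_Q\,\ell(Q)^{\theta-1-\pdmnMinusOne/p}\doublebar{\Tr_{m-1}\varphi}_{\dot C^{1-\theta}(cQ)}$ via polynomial subtraction, this is a \emph{duality-type} estimate; for $p\leq 1$ the space $\dot B^{p,p}_{\theta-1}$ is not a dual space, so such a bound does not by itself produce an atom. You must actually exhibit an $L^\infty$ function with mean zero representing $F_Q$, and nothing in the sketch does this. Second, for $m\geq 2$ the functional $F_Q(\varphi)$ depends on the full jet $\Tr_{m-1}(\eta_Q\varphi)$, which via Leibniz involves $\Tr_k\varphi$ for all $k\leq m-1$; you then have to choose one array $\arr g\in(\dot B^{p,p}_{\theta-1})^r$ among the equivalence class representing this, and your remark that ``the subtracted polynomial pieces sit in the trivial part of the equivalence class'' does not explain how the choice is made or why it lands in $\dot B^{p,p}_{\theta-1}$. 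The paper's $(-\Delta)^{-k}$ trick is precisely the device that makes this choice explicit in the half-space, and it has no obvious analogue in your localization picture.
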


Before presenting the (somewhat involved) proof of Theorem~\ref{thm:trace:Neumann:halfspace}, we will mention an important corollary in the case $m=1$.

\begin{cor}\label{cor:trace:Neumann:2}
Let $\theta$, $p$ and $q$ be as in Theorem~\ref{thm:trace:Neumann:halfspace}. Let
\begin{equation*}\Omega=\{(x',t):x'\in\R^\dmnMinusOne,\>t>\psi(x)\}\end{equation*}
for some Lipschitz function~$\psi$. Suppose that $m=1$.

If $\vec G\in L_{av}^{p,\theta,q}(\Omega)$ and $\Div\vec G=0$ in~$\Omega$, then $\MM_1^\Omega\vec G\in \dot B^{p,p}_{\theta-1}(\partial\Omega)$. 
\end{cor}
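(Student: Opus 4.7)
The plan is to flatten $\Omega$ to the upper half-space and invoke Theorem~\ref{thm:trace:Neumann:halfspace}. Define the bilipschitz map $\Psi:\R^\dmn_+\to\Omega$ by $\Psi(x',t)=(x',t+\psi(x'))$; its Jacobian matrix $\mat J_\Psi$ is bounded above and below with constants depending only on $\doublebar{\nabla\psi}_{L^\infty}$, and $\abs{\det\mat J_\Psi}\equiv 1$. On $\R^\dmn_+$ I would define the Piola pullback
\begin{equation*}
\widetilde{\vec G}(x)=\mat J_\Psi(x)^{-1}\,\vec G(\Psi(x)).
\end{equation*}
The standard change of variables shows that for every $\tilde\varphi\in C^\infty_0(\R^\dmn)$, writing $\varphi=\tilde\varphi\circ\Psi^{-1}$,
\begin{equation*}
\int_{\R^\dmn_+} \widetilde{\vec G}\cdot\nabla\tilde\varphi\,dx=\int_{\Omega}\vec G\cdot\nabla\varphi\,dy;
\end{equation*}
taking $\tilde\varphi\in C^\infty_0(\R^\dmn_+)$ yields $\Div\widetilde{\vec G}=0$ in $\R^\dmn_+$.

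I would then check that $\widetilde{\vec G}\in L_{av}^{p,\theta,q}(\R^\dmn_+)$ with norm comparable to $\doublebar{\vec G}_{L_{av}^{p,\theta,q}(\Omega)}$. This follows from the Whitney-cube characterization \eqref{eqn:L:norm:whitney}: $\Psi$ maps dyadic Whitney cubes in $\R^\dmn_+$ to subsets of $\Omega$ of comparable side-length and comparable distance to $\partial\Omega$, and pointwise $\abs{\widetilde{\vec G}(x)}\approx\abs{\vec G(\Psi(x))}$. Theorem~\ref{thm:trace:Neumann:halfspace} then provides a representative $\tilde g=\M_1^{\R^\dmn_+}\widetilde{\vec G}\in\dot B^{p,p}_{\theta-1}(\R^\dmnMinusOne)$. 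Substituting into the boundary identity above, together with $d\sigma_{\partial\Omega}=\sqrt{1+\abs{\nabla\psi}^2}\,dx'$, gives the pointwise relation
\begin{equation*}
\tilde g(x')=(\M_1^\Omega\vec G)(x',\psi(x'))\sqrt{1+\abs{\nabla\psi(x')}^2}.
\end{equation*}

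The main step — and the one requiring care — is transferring the Besov membership from $\R^\dmnMinusOne$ back to $\partial\Omega$, using the atomic characterization since $p\leq 1$. Given a decomposition $\tilde g=\sum_j\lambda_j\tilde a_j$ realizing the $\dot B^{p,p}_{\theta-1}(\R^\dmnMinusOne)$-norm, define $a_j$ on $\partial\Omega$ by $a_j(x',\psi(x'))=\tilde a_j(x')/\sqrt{1+\abs{\nabla\psi(x')}^2}$. If $\tilde a_j$ is supported in $B(y_j',r_j)\subset\R^\dmnMinusOne$, then $a_j$ is supported in a surface ball of radius $\approx r_j$ around $(y_j',\psi(y_j'))\in\partial\Omega$; its $L^\infty$-norm is bounded by $C\,r_j^{\theta-1-\pdmnMinusOne/p}$ (the weight $\sqrt{1+\abs{\nabla\psi}^2}^{\,-1}$ is bounded, and the exponent $\theta-1-\pdmnMinusOne/p$ is negative so changing $r_j$ by a bounded factor is harmless); and its mean vanishes because
\begin{equation*}
\int_{\partial\Omega}a_j\,d\sigma=\int_{\R^\dmnMinusOne}\frac{\tilde a_j(x')}{\sqrt{1+\abs{\nabla\psi(x')}^2}}\sqrt{1+\abs{\nabla\psi(x')}^2}\,dx'=0.
\end{equation*}
Hence each $a_j$ is a bounded multiple of a $\dot B^{p,p}_{\theta-1}(\partial\Omega)$-atom, and summing gives $\M_1^\Omega\vec G\in\dot B^{p,p}_{\theta-1}(\partial\Omega)$. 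The hardest point is precisely verifying this preservation of atoms — the support and $L^\infty$ bounds are routine, but matching the Jacobian factor in the mean-zero condition is what makes the construction work and relies essentially on the specific form of the Piola transform.
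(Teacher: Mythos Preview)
Your proof is correct and follows essentially the same route as the paper: flatten via $\Psi(x',t)=(x',t+\psi(x'))$, use the Piola pullback $\widetilde{\vec G}=\mat J_\Psi^{-1}(\vec G\circ\Psi)$ to preserve the divergence-free condition and the $L_{av}^{p,\theta,q}$-membership, apply Theorem~\ref{thm:trace:Neumann:halfspace}, and then transfer the atomic decomposition back across the bilipschitz boundary parametrization. You spell out the atom-transfer step (support, $L^\infty$ bound, and the Jacobian cancellation in the mean-zero condition) in more detail than the paper, which simply asserts that the atomic Definition~\ref{dfn:besov} makes the equivalence $\MM_1^{\smash{\R^\dmn_+}\vphantom{\R}}\vec H\in\dot B^{p,p}_{\theta-1}(\R^\dmnMinusOne)\iff\MM_1^\Omega\vec G\in\dot B^{p,p}_{\theta-1}(\partial\Omega)$ immediate.
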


\begin{proof}
We apply the change of variables $\Psi(x',t)=(x',t+\psi(x'))$; then $\Psi(\R^\dmn_+)=\Omega$.

Let $\varphi$ be smooth and compactly supported. Define
\begin{equation*}\widetilde\varphi(x)=\varphi(\Psi(x)),\qquad \vec H(x) = \abs{\mat J_\Psi(x)}\, \mat J_\Psi(x)^{-1} \vec G(\Psi(x))= \mat J_\Psi(x)^{-1} \vec G(\Psi(x))\end{equation*}
where $J_\Psi(x)$ is the Jacobian matrix, so that $\nabla\widetilde\varphi(x)=J_\Psi(x)^T \nabla\varphi(\Psi(x))$.
An elementary argument in multivariable calculus (compare Lemma~\ref{lem:second:change}) establishes that
\begin{equation}\label{eqn:change:neumann}\int_\Omega \nabla\varphi\cdot \vec G = \int_{\R^\dmn_+} \nabla\widetilde\varphi\cdot \vec H.\end{equation}
In particular, observe that $\Div\vec H=0$ in~$\R^\dmn_+$. 
Also, $\dist(x,\partial \R^\dmn_+) \approx \dist(\Psi(x),\partial \Omega)$, and so $\vec G\in L_{av}^{p,\theta,q}(\Omega)$ if and only if $\vec H\in L_{av}^{p,\theta,q}( \R^\dmn_+)$. Thus, by Theorem~\ref{thm:trace:Neumann:halfspace}, we have that $\MM_1^{\smash{\R^\dmn_+}\vphantom{\R}}\vec H\in \dot B^{p,p}_{\theta-1}(\R^\dmnMinusOne)$.

Furthermore, by formula~\eqref{eqn:change:neumann}, we have that
\begin{equation*}\int_{\partial \Omega}\varphi(x)\MM_1^\Omega\vec G(x)\,d\sigma(x) = \int_{\partial \R^\dmn_+}\varphi(\Psi(x))\MM_1^{\smash{\R^\dmn_+}\vphantom{\R}}\vec H(x)\,d\sigma(x)\end{equation*}
and so $\MM_1^{\smash{\R^\dmn_+}\vphantom{\R}} \vec H(x)=\MM_1^\Omega\vec G(\Psi(x))\,s(x)$, where $s(x)$ is the infinitesimal change of area (essentially, the Jacobian determinant of the change of variables $\Psi:\partial \R^\dmn_+\mapsto \partial \Omega$). 

Observe that the atomic definition~\ref{dfn:besov} implies that $\MM_1^{\smash{\R^\dmn_+}\vphantom{\R}} \vec H\in \dot B^{p,p}_{\theta-1}(\R^\dmnMinusOne)$ if and only if $\MM_1^\Omega\vec G\in \dot B^{p,p}_{\theta-1}(\partial \Omega)$, as desired.
\end{proof}

\begin{proof}[Proof of Theorem~\ref{thm:trace:Neumann:halfspace}]



Let $\varphi$ be smooth and compactly supported; for notational convenience we will also take $\varphi$ real-valued. Let $\varphi_j(x)=\partial_t^j \varphi(x,t)\big\vert_{t=0}$. We then have that
\begin{equation*}\Tr_{m-1} \varphi = \Tr_{m-1} \sum_{j=0}^{m-1} \frac{1}{j!} t^j \varphi_j(x)\,\eta(t)\end{equation*}
where $\eta$ is a smooth cutoff function identically equal to $1$ near $t=0$.

Observe that $\langle \nabla^m \varphi, \arr G\rangle_{\R^\dmn_+}$ depends only on the functions $\varphi_j$ and on $\arr G$, and so there exist functions $M_j \arr G$ such that
\begin{equation*}\langle \nabla^m \varphi, \arr G\rangle_{\R^\dmn_+} 
= \sum_{j=0}^{m-1}\langle \varphi_j, M_j \arr G\rangle_{\partial\R^\dmn_+}
= \sum_{j=0}^{m-1}\langle \partial_\dmn^j\varphi, M_j \arr G\rangle_{\partial\R^\dmn_+}
.\end{equation*}

Notice that $\{M_j\arr G\}_{j=0}^{m-1}$ is not equal to our Neumann trace $\M_m^{\smash{\R^\dmn_+}\vphantom{\R}}\arr G$ but is closely related. In particular, observe that each $M_j\arr G$ is a well-defined function but that $\M_m^{\smash{\R^\dmn_+}\vphantom{\R}}\arr G$ is an equivalence class of functions.
We will first bound $M_j\arr G$ for each $0\leq j\leq m-1$, and then use $M_j\arr G$ to construct a representative of $\M_m^{\smash{\R^\dmn_+}\vphantom{\R}}\arr G$ that lies in $\dot B^{p,p}_{\theta-1}(\R^\dmnMinusOne)$.

Fix some $j$ with $0\leq j\leq m-1$. We  will use Daubechies wavelets to show that $M_j\arr G\in \dot B^{p,p}_{\theta+j-m}(\R^\dmnMinusOne)$.
The homogeneous Daubechies wavelets were constructed in \cite[Section~4]{Dau88}. We will need the following properties.
\begin{lem}
For any integer $N>0$ there exist real functions $\psi$ and $\varphi$ defined on $\R$ that satisfy the following properties.
\begin{itemize}
\item $\abs{\frac{d^k}{dx^k}\psi(x)}\leq C(N)$, $\abs{\frac{d^k}{dx^k}\varphi(x)}\leq C(N)$ for all $k<N$,
\item $\psi$ and $\varphi$ are supported in the interval $(-C(N),1+C(N))$,
\item $\int_{\R} \varphi(x)\,dx\neq 0$, $\int_{\R} \psi(x)\,dx = \int_{\R} x^k\,\psi(x)\,dx=0$ for all $0\leq k<N$.
\end{itemize}
Furthermore, suppose we let $\psi_{i,m}(x)=2^{i/2}\psi(2^i x-m)$ and $\varphi_{i,m}(x)=2^{i/2}\varphi(2^i x-m)$. Then $\{\psi_{i,m}:i,m\in\Z\}$ is an orthonormal basis for $L^2(\R)$, and  if $i_0$ is an integer then $\{\varphi_{i_0,m}:m\in\Z\}\cup\{\psi_{i,m}:m\in\Z,i\geq i_0\}$ is also an orthonormal basis for $L^2(\R)$.
\end{lem}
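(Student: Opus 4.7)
The plan is to follow the multiresolution-analysis construction of Daubechies \cite{Dau88}. The core object is a trigonometric polynomial $m_0(\xi)=\frac{1}{\sqrt 2}\sum_{k=0}^{K} h_k e^{-ik\xi}$, the low-pass filter, chosen so that $m_0(0)=1$ and the quadrature mirror filter condition $\abs{m_0(\xi)}^2+\abs{m_0(\xi+\pi)}^2=1$ holds. To enforce the vanishing-moment condition on $\psi$ I would additionally demand the factorization $m_0(\xi)=\bigl(\frac{1+e^{-i\xi}}{2}\bigr)^M Q(\xi)$ for some trigonometric polynomial $Q$; the existence of such a $Q$ reduces, via standard identities, to factoring an explicit nonnegative polynomial in $\cos\xi$ of degree $M-1$ as $\abs{Q}^2$, and the Riesz spectral factorization lemma supplies a $Q$ of the required form.

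Given $m_0$, I would first define the scaling function by $\hat\varphi(\xi)=\prod_{j=1}^\infty m_0(2^{-j}\xi)$; convergence of the product and $\hat\varphi(0)=1$ follow from $m_0(0)=1$ and the quadratic vanishing $m_0(\xi)-1=O(\xi^2)$ near the origin. Because $m_0$ is a trigonometric polynomial with only finitely many nonzero coefficients $h_k$, the refinement equation $\varphi(x)=\sqrt 2\sum_k h_k\varphi(2x-k)$ forces $\supp\varphi$ to be a bounded interval whose length depends only on $K$. I would then set $\psi(x)=\sqrt 2\sum_k (-1)^k h_{1-k}\varphi(2x-k)$, so that $\psi$ is likewise compactly supported and satisfies $\hat\psi(\xi)=e^{-i\xi/2}\overline{m_0(\xi/2+\pi)}\,\hat\varphi(\xi/2)$; the factor $\bigl(\frac{1+e^{-i\xi}}{2}\bigr)^M$ appearing in $m_0$ produces an $M$-fold zero of $\hat\psi$ at the origin, which is equivalent to the moment conditions $\int x^k\psi=0$ for $0\leq k<M$.

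To obtain the orthonormal basis properties I would invoke the standard MRA machinery. The QMF identity implies that the integer translates $\{\varphi(\cdot-m)\}_{m\in\Z}$ form an orthonormal system; setting $V_j=\overline{\text{span}}\{\varphi_{j,m}\}_{m\in\Z}$ produces a nested chain of closed subspaces of $L^2(\R)$ with $\cap_j V_j=\{0\}$ and $\overline{\cup_j V_j}=L^2(\R)$ (the density uses $\hat\varphi(0)=1$, and the triviality of the intersection uses compact support of $\varphi$). The orthogonal complement $W_j=V_{j+1}\ominus V_j$ is then spanned by $\{\psi_{j,m}\}_{m\in\Z}$, and decomposing $L^2(\R)$ either as $\bigoplus_{j\in\Z} W_j$ or as $V_{i_0}\oplus\bigoplus_{j\geq i_0}W_j$ yields the two orthonormal bases claimed in the statement.

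The main obstacle is the smoothness bound $\abs{\tfrac{d^k}{dx^k}\psi},\abs{\tfrac{d^k}{dx^k}\varphi}\leq C(N)$ for $k<N$. The bare MRA construction produces only $L^2$ functions, and classical regularity must be extracted from decay of $\hat\varphi(\xi)$ at infinity. The factor $\bigl(\frac{1+e^{-i\xi}}{2}\bigr)^M$ in $m_0$ contributes a factor $\abs{\xi}^{-M}$ to $\hat\varphi$, but this gain is partially offset by the possibly growing factors $\abs{Q(2^{-j}\xi)}$ in the infinite product. A careful estimate, due to Daubechies in \cite{Dau88} and sharpened by later authors, shows that $\varphi$ and $\psi$ are H\"older continuous with an exponent that grows linearly in $M$ (at a rate strictly less than~$1$). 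Consequently, to secure $N-1$ bounded classical derivatives one must choose the construction parameter $M=M(N)$ sufficiently large, which simultaneously enlarges the support interval and forces the constant $C(N)$ to depend on~$N$, exactly as stated. I would quote this regularity estimate from \cite{Dau88} rather than reprove it, since its proof is technical and peripheral to the applications in the remainder of the paper.
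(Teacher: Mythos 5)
The paper gives no proof of this lemma at all---it simply quotes the construction from Daubechies \cite[Section~4]{Dau88}---and your proposal is an accurate outline of exactly that construction (QMF filter with the $\bigl(\frac{1+e^{-i\xi}}{2}\bigr)^M$ factor, Riesz factorization, infinite product for $\hat\varphi$, compact support from the refinement equation, vanishing moments from the zero of $\hat\psi$ at the origin, MRA decomposition for the two bases), deferring to the same reference for the regularity estimate, so it is essentially the same approach. The one imprecision worth noting is the claim that the QMF identity alone yields orthonormality of the integer translates of $\varphi$ (in general one needs Cohen's or Lawton's criterion), but since the Daubechies filter $m_0$ vanishes only at $\xi=\pi$ this criterion holds, and the point is verified in \cite{Dau88}.
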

The functions $\varphi$ and $\psi$ are often referred to as a scaling function and a wavelet, or as a father wavelet and a mother wavelet.

We may produce an orthonormal basis of $L^2(\R^\dmnMinusOne)$ from these wavelets by considering the $2^\dmnMinusOne-1$ functions  $\Psi^\ell(x) = \eta_1(x_1)\,\eta_2(x_2)\dots\eta_\dmnMinusOne(x_\dmnMinusOne)$, where for each $i$ we have that either $\eta_i(x)=\varphi(x)$ or $\eta_i(x)=\psi(x)$, and where $\eta_k(x)=\psi(x)$ for at least one~$k$. Let $\Psi^\ell_{i,m}=2^{i\pdmnMinusOne/2}\Psi^\ell(2^i x-m)$; then $\{\Psi^\ell_{i,m}:1\leq\ell\leq 2^\dmnMinusOne-1,\> i\in\Z,\>m\in\Z^\dmnMinusOne\}$ is an orthonormal basis for~$L^2(\R^\dmnMinusOne)$. Notice that we may instead index the wavelets $\Psi^\ell_{i,m}$ by dyadic cubes~$Q$, with $\Psi^\ell_{i,m}=\Psi^\ell_Q$ if $Q=\{2^{-i}(y+m):y\in[0,1]^\dmnMinusOne\}$. We then have that $\Psi^\ell_Q$
has the following properties:
\begin{itemize}
\item $\Psi_Q^\ell$ is supported in $CQ$,
\item $\abs{\partial^\beta\Psi^\ell_Q(x)}\leq C(N)\ell(Q)^{-\pdmnMinusOne/2-\abs{\beta}}$ whenever $\abs{\beta}<N$,
\item $\int_{\R^\dmnMinusOne} x^\beta \Psi^\ell_Q(x)\,dx=0$ whenever  $\abs{\beta}<N$.
\end{itemize}

Because $\{\Psi_Q^\ell\}$ is an orthonormal basis of $L^2(\R^\dmnMinusOne)$, we have that if $f\in L^2(\R^\dmnMinusOne)$ then  
\begin{equation}
\label{eqn:besov:wavelet}
f(x)=\sum_{Q}\sum_{\ell=1}^{2^\dmnMinusOne-1} \langle f, \Psi_Q^\ell\rangle \Psi_Q^\ell(x).\end{equation}
By \cite[Theorem~4.2]{Kyr03}, if $f\in \dot B^{p,p}_{\sigma}(\R^\dmnMinusOne)$ for some $0<p\leq\infty$ and some $\sigma\in\R$, the decomposition~\eqref{eqn:besov:wavelet} is still valid. Furthermore, we have the inequality 
\begin{equation}
\label{eqn:besov:wavelet:norm}
\doublebar{f}_{\dot B^{p,p}_\sigma(\R^\dmnMinusOne)}^p\leq C\sum_Q\sum_{\ell=1}^{2^\dmnMinusOne-1}
\abs{\langle f, \Psi_Q^\ell\rangle }^p \ell(Q)^{\pdmnMinusOne(1-p/2)-p\sigma}.\end{equation}
The reverse inequality is also proven in \cite[Theorem~4.2]{Kyr03}; however, we will only use the direction stated above. 
Thus, to bound $M_j\arr G$, we need only analyze $\langle M_j\arr G, \Psi_Q^\ell\rangle_{\R^\dmnMinusOne}$. 

Let $\varphi(x,t) = \Psi_Q^\ell(x) \frac{1}{j!} t^j \eta(t)$. Then by definition of $M_j\arr G$,
\begin{equation*}\langle  \Psi_Q^\ell, M_j\arr G \rangle_{\R^\dmnMinusOne}
= \langle \nabla^m \varphi, \arr G\rangle_{\R^\dmn_+}.\end{equation*}
We choose the smooth cutoff function $\eta$ in the definition of $\varphi$ so that $\eta(t)=1$ if $t<\ell(Q)$ and $\eta(t)=0$ if $t>2\ell(Q)$, with the usual bounds on the derivatives of~$\eta$. We then have that
\begin{equation*}\langle \nabla^m \varphi, \arr G\rangle_{\R^\dmn_+}
= \sum_{\abs{\alpha}=m} \frac{1}{j!} \int_{\R^\dmn_+} \partial^\alpha (t^j\,\eta(t)\,\Psi_Q^\ell(x))\,G_\alpha(x,t)\,dx\,dt
.\end{equation*}
Because $\eta$ and $\Psi_Q^\ell$ are compactly supported, we have that
\begin{equation*}\langle \nabla^m \varphi, \arr G\rangle_{\R^\dmn_+}
= \sum_{\abs{\alpha}=m} \frac{1}{j!} 
\int_0^{2\ell(Q)} \int_{CQ} \partial^\alpha (t^j\,\eta(t)\,\Psi_Q^\ell(x))\,G_\alpha(x,t)\,dx\,dt
.\end{equation*}
Applying our bounds on the derivatives of $\Psi_Q^\ell$ and~$\eta$, we see that
\begin{equation*}\langle \nabla^m \varphi, \arr G\rangle_{\R^\dmn_+}
\le	
	C \ell(Q)^{j-\pdmnMinusOne/2-m}
	\int_0^{2\ell(Q)} \int_{CQ} \abs{\arr G(x,t)}\,dx\,dt
.\end{equation*}
Thus, by the bound \eqref{eqn:besov:wavelet:norm},
\begin{align*}\doublebar{M_j\arr G}_{\dot B^{p,p}_\sigma(\R^\dmnMinusOne)}^p
&\leq
	C \sum_Q \sum_{\ell=1}^{2^\dmnMinusOne-1}
	\abs{\langle \Psi_Q^\ell, M_j\arr G\rangle_{\R^\dmnMinusOne} }^p \ell(Q)^{\pdmnMinusOne(1-p/2)-p\sigma}
\\&\leq
	C \sum_Q 
	\ell(Q)^{\pdmnMinusOne(1-p)-p\sigma+pj-pm}
	\biggl(\int_0^{2\ell(Q)} \int_{CQ} \abs{\arr G}\biggr)^p
.\end{align*}
Recalling Lemma~\ref{lem:L:L1}, we set $\sigma=\theta+j-m$, so that
\begin{align*}\doublebar{M_j\arr G}_{\dot B^{p,p}_{\theta+j-m}(\R^\dmnMinusOne)}^p
&\leq
	C \sum_Q 
	\ell(Q)^{\pdmnMinusOne(1-p)-p\theta}
	\biggl(\int_0^{2\ell(Q)} \int_{CQ} \abs{\arr G}\biggr)^p
\end{align*}
which by Lemma~\ref{lem:L:L1} is at most $C	\doublebar{\arr G}_{L_{av}^{p,\theta,1}(\R^\dmn_+)}^p$.

We have now bounded $M_j\arr G$. We wish to show that some representative of $\M_m^{\smash{\R^\dmn_+}\vphantom{\R}}\arr G$ lies in $\dot B^{p,p}_{\theta-1}(\R^\dmnMinusOne)$.

Recall from \cite[Section~5.2.3]{Tri83} that the partial derivative operator $\partial^\zeta$ is a bounded operator from $\dot B^{p,p}_\sigma(\R^\dmnMinusOne)$ to $\dot B^{p,p}_{\sigma-\abs\gamma}(\R^\dmnMinusOne)$, and that the Laplace operator $-\Delta$ is a bounded operator $\dot B^{p,p}_\sigma(\R^\dmnMinusOne)\mapsto \dot B^{p,p}_{\sigma-2}(\R^\dmnMinusOne)$ with a bounded inverse. Let $$g_j = (-\Delta)^{j-m+1} M_j\arr G;$$ then $g_j \in {\dot B^{p,p}_{\theta-j+m-2}(\R^\dmnMinusOne)}$. For each multiindex $\gamma$, let $\gamma=(\gamma_\parallel,\gamma_\perp)$, where $\gamma_\parallel$ is a multiindex in $\N^\dmnMinusOne$ and where $\gamma_\perp=\gamma_\dmn$ is an integer. For each $\gamma$ with $\abs{\gamma}=m-1$, define
\begin{equation*}
g_\gamma = \frac{(m-1-\gamma_\perp)!}{\gamma_\parallel!} \partial^{\gamma_\parallel} g_{\gamma_\perp}
.\end{equation*}
Then $g_\gamma \in {\dot B^{p,p}_{\theta-1}(\R^\dmnMinusOne)}$. Now, 
\begin{equation*}
\langle \Tr_{m-1}^{\smash{\R^\dmn_+}\vphantom{\R}} \varphi, \arr g\rangle_{\partial\R^\dmn_+}
=
	\sum_{\abs{\gamma}=m-1} 
	\int_{\R^\dmnMinusOne} \partial^{\gamma_\parallel} \varphi_{\gamma_\perp}(x) \, g_\gamma(x)\,dx
.\end{equation*}
We integrate by parts to see that
\begin{equation*}
\langle \Tr_{m-1}^{\smash{\R^\dmn_+}\vphantom{\R}} \varphi, \arr g\rangle_{\partial\R^\dmn_+}
=
	\sum_{\abs{\gamma}=m-1} 
	(-1)^{m-1-\gamma_\perp} 
	\int_{\R^\dmnMinusOne} \varphi_{\gamma_\perp}(x) \, \partial^{\gamma_\parallel} g_{\gamma}(x)\,dx
.\end{equation*}
We have that
\begin{equation*}\sum_{\abs{\gamma_\parallel}=k} \frac{k!}{\gamma_\parallel!} \partial^{2\gamma_\parallel} = \Delta^k.\end{equation*}
Applying this formula and using the definition of $g_\gamma$, we see that
\begin{align*}
\langle \Tr_{m-1}^{\smash{\R^\dmn_+}\vphantom{\R}} \varphi, \arr g\rangle_{\partial\R^\dmn_+}
&=
	\sum_{\abs{\gamma_\perp}=0}^{m-1}
	\int_{\R^\dmnMinusOne} \varphi_{\gamma_\perp}(x) \, (-\Delta)^{m-1-\gamma_\perp} g_{\gamma_\perp}(x)\,dx
\\&=
	\sum_{\abs{\gamma_\perp}=0}^{m-1}
	\int_{\R^\dmnMinusOne} \varphi_{\gamma_\perp}(x) \, M_{\gamma_\perp} \arr G(x)\,dx
\\&=
	\sum_{j=0}^{m-1}
	\langle \varphi_j, M_j\arr G\rangle_{\R^\dmnMinusOne} 
= 
	\langle \nabla^m\varphi, \arr G\rangle_{\R^\dmn_+} 
.\end{align*}
Thus, $\arr g$ is a representative of $\M_m^\Omega\arr G$, and $\arr g\in {\dot B^{p,p}_{\theta-1}(\R^\dmnMinusOne)}$, as desired.
\end{proof}

\bibliographystyle{amsalpha}
\bibliography{bibli.bib}

\end{document}